\def\bfB{\mathbf{B}}
\def\reg{\text{reg}}
\def\red{\text{red}}
\def\nil{\text{lnil}}
\def\Aprop{\text{A}}
\newcommand{\defi}{\operatorname{def}}
\newcommand{\card}{\operatorname{card}}
\newcommand{\Hom}{\operatorname{Hom}}
\newcommand{\SUP}{\operatorname{SP}}
\newcommand{\id}{\operatorname{id}}
\newcommand{\Ker}{\operatorname{Ker}}
\newcommand{\PSet}{\operatorname{PSet}}
\newcommand{\Set}{\operatorname{Set}}
\newcommand{\LinCat}{\operatorname{Lin}}
\newcommand{\Real}{\operatorname{Real}}
\newcommand{\Irr}{\operatorname{Irr}}
\newcommand{\Rep}{\operatorname{Rep}}
\newcommand{\CSup}{\operatorname{CSup}}
\newcommand{\Vect}{\operatorname{span}}
\newcommand{\rk}{\operatorname{rk}}
\newcommand{\Ubd}{\operatorname{Ubd}}
\renewcommand{\setminus}{\smallsetminus}
\newcommand{\modu}{\operatorname{mod}}
\def\F{\mathbb{F}}
\def\N{\mathbb{N}}
\def\Z{\mathbb{Z}}
\def\calC{\mathcal{C}}
\def\calD{\mathcal{D}}
\def\calQ{\mathcal{Q}}
\def\calX{\mathcal{X}}
\def\calY{\mathcal{Y}}
\def\lcro{\mathopen{[\![}}
\def\rcro{\mathclose{]\!]}}
\theoremstyle{definition}
\newtheorem{Def}{Definition}[section]
\newtheorem{Not}[Def]{Notation}
\theoremstyle{plain}
\newtheorem{theo}{Theorem}[section]
\newtheorem{prop}[theo]{Proposition}
\newtheorem{cor}[theo]{Corollary}
\newtheorem{lemma}[theo]{Lemma}
\theoremstyle{plain}
\theoremstyle{remark}
\newtheorem{Rems}{Remarks}
\newtheorem{Rem}[Rems]{Remark}
\newtheorem{ex}[Rems]{Example}
\title{Locally finite cycles of linear mappings in countable dimension}
\author{Cl\'ement de Seguins Pazzis\footnote{Universit\'e de Versailles Saint-Quentin-en-Yvelines, Laboratoire de Math\'ematiques
de Versailles, 45 avenue des Etats-Unis, 78035 Versailles cedex, France}
\footnote{e-mail address: dsp.prof@gmail.com}}
\begin{document}

\thispagestyle{plain}

\maketitle

\begin{abstract}
Let $n$ be a positive integer.
An $n$-cycle of linear mappings is an $n$-tuple $(u_1,\dots,u_n)$
of linear maps $u_1 \in \Hom(U_1,U_2),u_2 \in \Hom(U_2,U_3),\dots,u_n \in \Hom(U_n,U_1)$, where $U_1,\dots,U_n$ are
vector spaces over a field. We classify such cycles, up to equivalence, when the spaces $U_1,\dots,U_n$ have countable dimension
and the composite $u_n\circ u_{n-1}\circ \cdots \circ u_1$ is locally finite.

When $n=1$, this problem amounts to classifying the reduced locally nilpotent endomorphisms of a countable-dimensional vector space up to similarity,
and the known solution involves the so-called Kaplansky invariants of $u$. Here, we extend Kaplansky's results to cycles of arbitrary length.
As an application, we prove that if $u_n \circ \cdots \circ u_1$ is locally nilpotent and the $U_i$ spaces have countable dimension, then there are bases $\bfB_1,\dots,\bfB_n$ of $U_1,\dots,U_n$, respectively, such that, for every $i \in \lcro 1,n\rcro$, $u_i$ maps every vector of $\bfB_i$ either to a vector of $\bfB_{i+1}$ or to the zero vector of $U_{i+1}$ (where we convene that $U_{n+1}=U_1$ and $\bfB_{n+1}=\bfB_1$).
\end{abstract}

\vskip 2mm
\noindent
\emph{AMS Classification: 16G20; 15A21}

\vskip 2mm
\noindent
\emph{Keywords:} Kaplansky invariants, ordinals, quivers, linear mappings, local nilpotency.


\section{Introduction}

\subsection{The problem}\label{problemSection}

Throughout, we fix a positive integer $n$. Given $k \in \Z$, we denote by $\overline{k}$ the class of $k$ modulo $n$.
We denote by $\Z/n=\bigl\{\overline{0},\dots,\overline{n-1}\bigr\}$ the group of classes of integers modulo $n$. Given $x \in \Z/n$ and $k \in \Z$, we simply write $x+k$ instead of $x+\overline{k}$, and $x-k$ instead of $x-\overline{k}$.
We denote by $\omega_1$ the first uncountable ordinal, and we set $\infty:=\{\{\emptyset\}\}$ (a set which is not an ordinal).
We convene that $\alpha<\infty$ for every ordinal $\alpha$.
Following the French convention, we denote by $\N$ the set of all non-negative integers, and by $\N^*$ the set of all positive ones.

Let $\F$ be a field. Unless specified otherwise, all the vector spaces under consideration have $\F$ as their ground field.
We denote by $\Irr(\F)$ the set of all monic irreducible polynomials of $\F[t]$.

An \textbf{$n$-cycle} of linear mappings (over the field $\F$) is a family $(U_k,u_k)_{k \in \Z/n}$
in which, for all $k \in \Z/n$, $U_k$ is a vector space (over $\F$), and $u_k$ is a linear mapping from $U_k$ to $U_{k+1}$.
Given such a cycle, we can consider, for each $k \in \Z/n$, the composite linear map
$$(\pi u)_k:=u_{k+n-1} \circ \cdots \circ u_{k+1} \circ
u_{k,}$$
which is an endomorphism of $U_k$.

\begin{center}
\mbox{
\psfrag{0}{$U_0$}
\psfrag{1}{$U_1$}
\psfrag{2}{$U_2$}
\psfrag{dots}{$\cdots$}
\psfrag{n-2}{$U_{n-2}$}
\psfrag{n-1}{$U_{n-1}$}
\psfrag{u0}{$u_0$}
\psfrag{u1}{$u_1$}
\psfrag{u2}{$u_2$}
\psfrag{un-2}{$u_{n-2}$}
\psfrag{un-1}{$u_{n-1}$}
\includegraphics{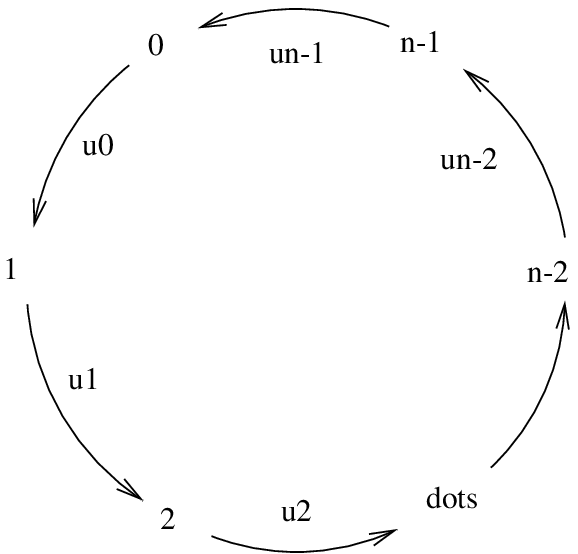}}
\end{center}

The \textbf{dimension} of such a cycle is defined as the sum of the dimensions of the $U_k$ spaces, i.e.\ the greatest such dimension if
one of the $U_k$'s is infinite dimensional.

Two such $n$-cycles $(U_k,u_k)_{k \in \Z/n}$ and $(V_k,v_k)_{k \in \Z/n}$ are called \textbf{equivalent}
when there exists a family $(\varphi_k)_{k \in \Z/n}$ in which:
\begin{itemize}
\item For every $k \in \Z/n$, $\varphi_k$ is an isomorphism from $U_k$ to $V_k$;
\item One has $v_k \circ \varphi_k=\varphi_{k+1}\circ u_k$ for all $k \in \Z/n$.
\end{itemize}
$$\xymatrix{
U_k \ar[r]^{u_k} \ar[d]^{\simeq}_{\varphi_k} & U_{k+1} \ar[d]^{\varphi_{k+1}}_{\simeq} \\
V_k \ar[r]_{v_k}  & V_{k+1.}
}$$
If the two $n$-cycles $(U_k,u_k)_{k \in \Z/n}$ and $(V_k,v_k)_{k \in \Z/n}$
are equivalent, then, for every $k \in \Z/n$, the endomorphism $(\pi u)_k$ is similar to $(\pi v)_k$, i.e.\ there is an isomorphism
$\varphi_k : U_k \overset{\simeq}{\rightarrow} V_k$ such that $(\pi v)_k =\varphi_k \circ (\pi u)_k \circ \varphi_k^{-1}$.
However, the following simple example shows that the converse does not hold:

\begin{ex}
Take $n=2$ and consider the $2$-cycles $u=\bigl((U_0,u_0),(U_1,u_1)\bigr)$ and $v=\bigl((V_0,v_0),(V_1,v_1)\bigr)$
where $U_0=U_1=V_0=V_1=\F$, $u_0=v_1=\id_\F$ and $u_1=v_0=0$. Then, $(\pi u)_k=0=(\pi v)_k$ for all $k \in \Z/2$,
yet $u$ is obviously inequivalent to $v$ because $\rk u_0 \neq \rk v_0$.
\end{ex}

In the finite-dimensional case (i.e.\ when all the spaces $U_i$ are finite-dimensional), the classification
of $n$-cycles up to equivalence is known, see \cite{Nazarova} for the first proof, and the more recent
\cite{DeOliveira,Gelonch,Sergeichuk}.
However, in the infinite-dimensional case there can be no sensible classification: indeed, when $n=1$ the problem is the one of classifying
endomorphisms of vector spaces up to similarity, a problem generally considered as intractable in the infinite-dimensional case.
However, the classification of endomorphisms is known in the countable-dimensional case if one restricts the scope to so-called locally finite endomorphisms. It is the aim of the present article to extend that classification to locally finite $n$-cycles of linear mappings (see Section \ref{locallyfiniteSection} for a definition of such cycles).

Before we can proceed, we need to consider the case of a single endomorphism (i.e.\ the case $n=1$).

An endomorphism $u$ of a vector space $U$ is \textbf{locally finite} when, for every $x \in U$,
there exists a non-zero polynomial $p(t) \in \F[t]$ such that $p(u)[x]=0$ (or, equivalently, the family $(u^k(x))_{k \geq 0}$ is linearly dependent).
Moreover, $u$ is called \textbf{locally nilpotent} when, for every $x \in U$, there exists an integer $n \geq 0$ such that $u^n(x)=0$, or equivalently
$U=\underset{n \in \N}{\bigcup} \Ker u^n$. Finally, given $p \in \Irr(\F)$, we say that $u$ is \textbf{$p$-primary}
when $U=\underset{n \in \N}{\bigcup} \Ker p(u)^n$.

When $u$ is locally finite over a space with countable dimension, the similarity class of $u$ is controlled by
a family of invariants called the Kaplansky invariants of $u$: we recall their definition below.
For all $p \in \Irr(\F)$, we set
$$U^{(p)}:=\underset{k \geq 0}{\bigcup} \Ker p(u)^k.$$
Assume now that $u$ is locally finite. The kernel decomposition theorem then yields
$$U=\underset{p \in \Irr(\F)}{\bigoplus} U^{(p)}.$$
Moreover, for any $p \in \Irr(\F)$, the subspace $U^{(p)}$ is stable under $u$, and the resulting endomorphism $u_p$
is $p$-primary. We say that $u_p$ is the \textbf{$p$-primary} part of $u$.

It is then easily seen that two locally finite endomorphisms $u,v$ are similar if and only if, for every $p \in \Irr(\F)$,
their $p$-primary parts are similar. Hence, in order to classify the locally finite endomorphisms, it suffices to classify the ones that are $p$-primary, for each $p \in \Irr(\F)$.

Now, fix $p \in \Irr(\F)$, and let $u$ be an endomorphism of a vector space $U$.
We endow $U$ with a structure of $\F[t]$-module by putting $q(t).x:=q(u)[x]$ for all $x \in U$ and all $q(t) \in \F[t]$.
The isomorphism type of this module is governed by the similarity type of $u$.
A linear subspace $V$ of $U$ is called \textbf{globally invariant} under $p(u)$ when $p(u)(V)=V$.
The sum $\overline{U}$ of all such subspaces is itself globally invariant under $p(u)$, and it is an $\F[t]$-submodule of $U$.
The space $\Ker p(u) \cap \overline{U}$ is naturally endowed with a structure of $\F[t]/(p)$-vector space: we denote
its dimension by $\kappa_{\infty}(u,p)$ (seen as a cardinal).
It can then be shown that the isomorphism type of the module $\overline{U}$ depends only on that dimension.
Moreover, one shows that $\kappa_{\infty}(u,p)=\kappa_{\infty}(u_p,p)$.

Now, we define the other Kaplansky invariants.
We define a non-decreasing sequence of submodules of $U$ indexed over the ordinals as follows:
\begin{itemize}
\item $p^0\,U:=U$;
\item $p^\alpha\,U=p.(p^{\alpha-1}\,U)$ for every ordinal $\alpha$ with a predecessor;
\item $p^\alpha\,U=\underset{\beta<\alpha}{\bigcap} p^\beta\,U$ for every limit ordinal $\alpha$.
\end{itemize}
Then, for every ordinal $\alpha$, the quotient module $p^\alpha U/p^{\alpha+1}U$ has a natural structure of vector space over
the residue field $\F[t]/(p)$, the
mapping $x\in U \mapsto p.x \in U$ induces a surjective homomorphism
$$u_{\alpha,p} : p^\alpha U/p^{\alpha+1}U \rightarrow p^{\alpha+1}U/p^{\alpha+2}U,$$
of $\F[t]/(p)$-vector spaces, and the dimension over $\F[t]/(p)$ of the kernel of that homomorphism is denoted by $\kappa_\alpha(u,p)$
and called the \textbf{Kaplansky invariant} of order $\alpha$ of $u$ with respect to $p$.
Again, one shows that $\kappa_\alpha(u_p,p)=\kappa_\alpha(u,p)$.
If $U$ has countable dimension, one proves that $p^\alpha U=p^{\alpha+1} U$ for every uncountable ordinal $\alpha$,
in which case $\kappa_\alpha(u,p)=0$.

The following result is a consequence of a general theorem of Kaplansky and Mackey
\cite{Kaplansky} on the classification of countably generated modules over a complete discrete valuation ring:

\begin{theo}[Mackey-Kaplansky]\label{KaplanskyTheo}
Let $u,v$ be locally finite endomorphisms of countable-dimensional vector spaces. The following conditions are equivalent:
\begin{enumerate}[(i)]
\item $u$ and $v$ are similar;
\item For every $p \in \Irr(\F)$ and every $\alpha \in \omega_1 \cup \{\infty\}$,
one has $\kappa_\alpha(u,p)=\kappa_\alpha(v,p)$.
\end{enumerate}
\end{theo}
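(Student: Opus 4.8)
The plan is to reduce to the $p$-primary case and then invoke the structure theory of modules over a complete discrete valuation ring. As recalled just above the statement, if $u,v$ are locally finite, the kernel decomposition theorem gives $U=\bigoplus_{p}U^{(p)}$ and $V=\bigoplus_p V^{(p)}$, and $u$ is similar to $v$ if and only if, for every $p\in\Irr(\F)$, the $p$-primary parts $u_p$ and $v_p$ are similar; since moreover $\kappa_\alpha(u,p)=\kappa_\alpha(u_p,p)$ and likewise for $v$, and similarly $\kappa_\infty(u,p)=\kappa_\infty(u_p,p)$, it suffices to prove the equivalence when $u$ and $v$ are themselves $p$-primary for a fixed $p\in\Irr(\F)$.

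So I would fix $p$ and make $U$ into an $\F[t]$-module by letting $t$ act as $u$. Saying that $u$ is $p$-primary means exactly that $U$ is a $p$-torsion module: every $x\in U$ is killed by some power of $p$. Consequently the $\F[t]$-module structure extends uniquely to a module structure over the localization $\F[t]_{(p)}$, hence over its $p$-adic completion $R_p$, which is a complete discrete valuation ring with residue field $K:=\F[t]/(p)$; and any $\F[t]$-linear map between two $p$-primary modules is automatically $R_p$-linear, so "$u$ similar to $v$" is the same as "$U$ isomorphic to $V$ as $R_p$-modules". Because $U$ has countable dimension over $\F$, it is a countably generated $R_p$-module. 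This places us precisely in the setting of the Mackey--Kaplansky classification of countably generated modules over a complete DVR.

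The remaining work is to match invariants. The submodule $p^\alpha U$ defined by the transfinite recursion in the text is the $\alpha$-th term of the Ulm filtration of the $R_p$-module $U$, so $\kappa_\alpha(u,p)$ — the $K$-dimension of the kernel of the induced surjection $p^\alpha U/p^{\alpha+1}U\to p^{\alpha+1}U/p^{\alpha+2}U$ — is exactly the $\alpha$-th Ulm invariant. Similarly, one checks that the sum $\overline{U}$ of the subspaces globally invariant under $p(u)$ is the maximal divisible $R_p$-submodule of $U$; a divisible module over a complete DVR is determined up to isomorphism by the $K$-dimension of its socle $\Ker p(u)\cap\overline{U}$, which is $\kappa_\infty(u,p)$, so this invariant accounts exactly for the divisible part. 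Finally, since $U$ is countably generated the Ulm filtration stabilizes at a countable ordinal, whence $\kappa_\alpha(u,p)=0$ for $\alpha\geq\omega_1$ and only the indices $\alpha\in\omega_1\cup\{\infty\}$ carry information. The Mackey--Kaplansky theorem then yields that $U$ and $V$ are isomorphic iff all their Ulm invariants together with the divisible-part invariant agree, which is condition (ii).

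As for difficulty: granting the deep input (the Mackey--Kaplansky structure theorem itself, which the text explicitly allows us to cite), the real work is bookkeeping — establishing the dictionary between the explicitly defined $\kappa_\alpha(u,p)$, $\kappa_\infty(u,p)$ and the module-theoretic Ulm and divisible invariants, and checking that nothing is lost when $p$ has a nontrivial residue field or when one passes from $\F[t]$ to the completion $R_p$. The one genuinely delicate point I expect is the legitimacy of the completion step: one must verify that the a priori weaker datum of an $\F[t]$-module isomorphism already forces $R_p$-linearity, which is exactly where $p$-primariness (each element annihilated by a power of $p$) is used in an essential way.
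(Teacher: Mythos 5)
Your proposal is correct and follows exactly the route the paper intends: the paper offers no proof of this statement beyond attributing it to the Kaplansky--Mackey classification of countably generated torsion modules over a complete discrete valuation ring, and your argument is precisely the elaboration of that citation (reduction to the $p$-primary case, passage to the completion $R_p$ of $\F[t]_{(p)}$, identification of $\kappa_\alpha(u,p)$ with the Ulm invariants and of $\kappa_\infty(u,p)$ with the socle dimension of the maximal divisible submodule $\overline{U}$). The delicate points you flag — automatic $R_p$-linearity of $\F[t]$-linear maps between $p$-primary modules, and stabilization of the Ulm filtration below $\omega_1$ — are handled correctly.
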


The aim of the present paper is to generalize Kaplansky and Mackey's theorem to locally finite $n$-cycles of linear mappings, as defined in the next section.

\subsection{Reformulation in terms of quiver representations}

It is convenient to reformulate the above problem into a problem of quiver representations.
Remember that a \textbf{quiver} is essentially an oriented graph with potentially multiple oriented edges from one vertex to another.
More formally, we define a quiver $\calQ$ as a pair $(V,E)$ in which $V$ is a finite set and $E=(A_{a,b})_{(a,b)\in V^2}$
is a family of finite sets indexed over $V^2$.
For each $(a,b)\in V^2$, the elements of $A_{a,b}$ are called the \textbf{arrows} of $\calQ$ from $a$ to $b$.
Here, the quiver we are concerned with is the cyclic quiver $\calC_n$ represented as follows:

\begin{center}
\mbox{
\psfrag{0}{$0$}
\psfrag{1}{$1$}
\psfrag{2}{$2$}
\psfrag{dots}{$\cdots$}
\psfrag{n-2}{$n-2$}
\psfrag{n-1}{$n-1$}
\includegraphics{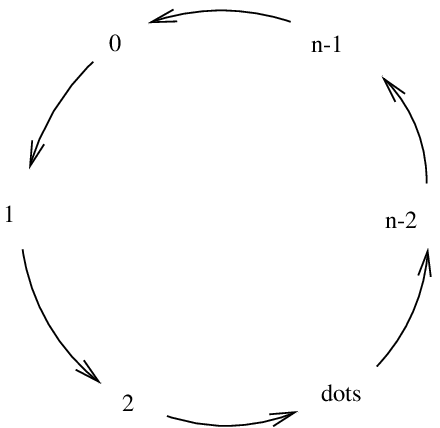}}
\end{center}

The set of vertices of $\calC_n$ is defined as $\Z/n$, and for each $(a,b)\in (\Z/n\Z)^2$, we put
$A_{a,b}:=\{a\}$ if $b=a+1$, otherwise $A_{a,b}:=\emptyset$.

Let $\calQ=(V,(A_{a,b})_{(a,b)\in V^2})$ be a quiver, and $\calC$ be a category.
A \textbf{representation of $\calQ$ on $\calC$} consists of a family $(U_a)_{a \in V}$ of objects of
$\calC$ and, for each $(a,b)\in V^2$, of a family $(f_x)_{x \in A_{a,b}}$ of morphisms of $\calC$ from $U_a$ to $U_b$.
In particular, a linear representation of $\calQ$ is a representation of $\calQ$ on the category of vector spaces over $\F$:
given such a linear representation $f=\bigl((U_a)_{a \in V},\bigl((f_x)_{x \in A_{a,b}}\bigr)_{(a,b)\in V^2}\bigr)$,
we define its dimension by $\dim f:=\sum_{a \in V} \dim U_a$.

Given two representations $f=\bigl((U_a)_{a \in V},\bigl((f_x)_{x \in A_{a,b}}\bigr)_{(a,b)\in V^2}\bigr)$ and \\
$g=\bigl((V_a)_{a \in V},\bigl((g_x)_{x \in A_{a,b}}\bigr)_{(a,b)\in V^2}\bigr)$ of $\calQ$ on a category $\calC$, a morphism $\Phi$ from $f$ to $g$
is a family $(\varphi_a)_{a \in V}$ such that, for every $a \in A$, we have $\varphi_a \in \Hom_\calC(U_a,V_a)$, and the following set of identities
is satisfied
$$\forall (a,b)\in V^2, \; \forall x \in A_{a,b}, \quad \varphi_b \circ f_x=g_x \circ \varphi_a.$$
$$\xymatrix{
U_a \ar[d]_{\varphi_a} \ar[r]^{f_x} & U_b \ar[d]^{\varphi_b} \\
V_a \ar[r]_{g_x}  & V_{b.}
}$$
Given three representations $f,g,h$ of the quiver $\calQ$ on $\calC$, and morphisms $\Phi=(\varphi_a)_{a \in V}$ and $\Psi=(\psi_a)_{a \in V}$, from $f$ to $g$ and from $g$ to $h$, respectively, the family
$\Psi \circ \Phi:=(\psi_a \circ \varphi_a)_{a \in V}$ is clearly a morphism from $f$ to $h$.
The above defines a category, denoted by $\Rep(\calQ,\calC)$, in which the objects are the representations of $\calQ$ on $\calC$,
and the morphisms and their composition have just been defined.

Next, given categories $\calC$ and $\calD$ and a functor $F : \calC \rightarrow \calD$, and
given a representation $f=\bigl((U_a)_{a \in V},\bigl((f_x)_{x \in A_{a,b}}\bigr)_{(a,b)\in V^2}\bigr)$ of $\calQ$ on $\calC$,
we define a representation $f^F :=((F(U_a))_{a \in V},\bigl((F(f_x))_{x \in A_{a,b}}\bigr)_{(a,b)\in V^2}\bigr)$
of $\calQ$ on $\calD$. Given a morphism $\varphi : f \rightarrow g$ of representations of $\calQ$ on $\calC$,
which we write $\varphi=(\varphi_a)_{a \in V}$, one checks that $\varphi^F:=(F(\varphi_a))_{a \in V}$ is a morphism
from $f^F$ to $g^F$ in the category $\Rep(\calQ,\calD)$.
Clearly, $F$ yields a functor from $\Rep(\calQ,\calC)$ to $\Rep(\calQ,\calD)$.

In order to proceed, we need the notion of a direct sum of linear representations of a quiver.
Let $(f_i)_{i \in I}$ be a family of linear representations of the quiver $\calQ$,
in which, for all $i \in I$, we write $f_i=\bigl((U_{a,i})_{a \in V},\bigl((f_{x,i})_{x \in A_{a,b}}\bigr)_{(a,b)\in V^2}\bigr)$.
The (external) direct sum of that family, denoted by $\underset{i \in I}{\bigoplus} f_i$,
is defined as the linear representation $\bigl((W_a)_{a \in V},\bigl((g_{x})_{x \in A_{a,b}}\bigr)_{(a,b)\in V^2}\bigr)$
in which $W_a=\underset{i \in I}{\bigoplus} U_{a,i}$ for all $a \in V$, and, for all $(a,b)\in V^2$ and all $x \in A_{a,b}$,
we define $g_x$ as the linear mapping from $W_a$ to $W_b$ that takes $\sum_{i \in I} z_i$ to
$\sum_{i \in I} f_{x,i}(z_i)$ for every family $(z_i)_{i \in I} \in \prod_{i \in I} U_{a,i}$ of vectors with finite support.
Note that the representation $\underset{i \in I}{\bigoplus} f_i$ is unmodified up to isomorphism if each
$f_i$ is replaced with an isomorphic representation.

\begin{Not}
A representation of the cyclic quiver $\calC_n$ on the category of sets simply consists of a family
$(X_k)_{k \in \Z/n}$ of sets and of a family $(f_k)_{k \in \Z/n}$ in which $f_k$ is a mapping from $X_k$ to
$X_{k+1}$ for all $k \in \Z/n$. In the remainder of the article, we will identify such a representation with the family
$(X_k,f_k)_{k \in \Z/n}$.
\end{Not}

A linear representation of the cyclic quiver $\calC_n$ simply consists of a family $(U_k)_{k \in \Z/n}$ of
vector spaces and of a list $(u_k)_{k \in \Z/n}$ in which $u_k$ is a linear mapping from $U_k$ to $U_{k+1}$ for all $k \in \Z/n\Z$.
Thus, linear representations of $\calC_n$ correspond to $n$-cycles of linear mappings, and one checks that
two linear representations of $\calC_n$ are isomorphic if and only if the corresponding $n$-cycles of linear mappings
are equivalent (in the meaning of Section \ref{problemSection}). Hence, our basic problem can be restated as follows:
\begin{center}
Classify, up to isomorphism, the linear representations of the cyclic quiver $\calC_n$.
\end{center}

Before we go on, let us introduce a family of examples of linear representations of
$\calC_n$. First of all, let us consider the ``shift" endomorphism $S$ of $\F[t]$ that takes $t^k$ to $t^{k-1}$ for all $k \geq 1$, and that takes
$t^0$ to $0$. Then, for all $i \in \lcro 1,n\rcro$, we define $U_{\overline{\imath}}:=t^{n-i} \F[t^n]$ and we note that
$S$ maps $U_k$ into $U_{k+1}$ for all $k \in \Z/n$.
The pair
$$j :=\bigl(U_k,S_{|U_k}\bigr)_{k \in \Z/n}$$
is then a linear representation of $\calC_n$ which we call the
\textbf{canonical infinite Jordan cycle-cell of base $0$.}
Note that $S_{|U_k}$ is injective unless $k=0$, in which case the kernel of $S_{|U_k}$ has dimension $1$.
Obviously, $j$ is locally nilpotent with countable dimension.

Next, given $l \in \Z/n$ and a linear representation $f=(U_k,u_k)_{k \in \Z/n}$ of $\calC_n$,
the family $R^l(f):=(U_{k-l},u_{k-l})_{k \in \Z/n}$ is a linear representation of $\calC_n$.
In particular $R^l(j)$ is a linear representation of $\calC_n$ called the \textbf{canonical infinite Jordan cycle-cell of base $l$.}
A linear representation $u=(U_k,u_k)$ of $\calC_n$
is called an \textbf{infinite Jordan cycle-cell of base $l$} whenever $u$ is isomorphic to $R^l(j)$, i.e.\ when
there is a family $(e_i)_{i \in \N}$ such that, for all $k \in \Z/n$, the
family $(e_i)_{i \in \N \; \text{with}\; k+i=l}$ is a basis of $U_k$, and
$u_k(e_i)=e_{i-1}$ for every pair $(i,k)\in \N \times \Z/n$ such that $k+i=l$, where we convene that $e_{-1}=0$.

\begin{Def}
Let $f =(X_k,f_k)_{k \in \Z/n} \in \Rep(\calC_n,\Set)$. Let $k \in \Z/n$ and $x \in X_k$.
By induction, we define a sequence $x^{(k,f)}=(x_i)_{i \geq 0} \in \prod_{i \in \N} X_{k+i}$ as follows:
$$x_0:=x\quad \text{and} \quad \forall i \geq 0, \; x_{i+1}=f_{k+i}(x_i).$$
It is called the \textbf{sequence associated with $x$ for the pair $(k,f)$}.
We say that this sequence is \textbf{ultimately cyclic} whenever there exists $l \geq 0$ such that
$x_{l+n}=x_l$ (in that case, we find by induction that $x_{i+n}=x_i$ for all $i \geq l$).
\end{Def}

\subsection{Reducing the locally finite case to the locally nilpotent case}\label{locallyfiniteSection}

Let $\bigl(U_k,u_k\bigr)_{k \in \Z/n}$ be an $n$-cycle of linear mappings.
Given $p \in \Irr(\F)$ and $k \in \Z/n$, we set
$$U_{k,p}:=\underset{i \geq 0}{\bigcup} \Ker p^i\bigl((\pi u)_k\bigr).$$
Let $k \in \Z/n$. Noting that
\begin{equation}\label{pseudocommutation}
u_k \circ (\pi u)_k=(\pi u)_{k+1} \circ u_k,
\end{equation}
we find that for every polynomial $q \in \F[t]$, the endomorphism $u_k$ maps $\Ker q((\pi u)_k)$ into
$\Ker q((\pi u)_{k+1})$. It follows that $u_k$ maps $U_{k,p}$ into $U_{k+1,p}$ for all $p \in \Irr(\F)$.
Now, assume that $p \neq t$. Then, $(\pi u)_k$ induces an automorphism of
$\Ker p^i\bigl((\pi u)_k\bigr)$ for all $i \in \N$, and hence an automorphism of $U_{k,p}$.
It follows that $u_k$ is injective on $U_{k,p}$ and that $u_{k-1}$ maps $U_{k-1,p}$ onto $U_{k,p}$.
Applying this to every $k$ shows that $u_k$ induces an isomorphism from $U_{k,p}$ to $U_{k+1,p}$
for every $p \in \Irr(\F) \setminus \{t\}$.

Next, we have the following basic results:

\begin{lemma}
Let $k$ and $l$ belong to $\Z/n$. Then:
\begin{itemize}
\item $(\pi u)_k$ is locally finite if and only if $(\pi u)_l$ is locally finite.
\item $(\pi u)_k$ is locally nilpotent if and only $(\pi u)_l$ is locally nilpotent.
\end{itemize}
\end{lemma}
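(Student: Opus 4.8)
The plan is to reduce the general statement to the single step $l=k+1$ and then iterate, using that $\overline{1}$ generates $\Z/n$. Concretely, it suffices to prove, for every $k\in\Z/n$, that $(\pi u)_k$ being locally finite forces $(\pi u)_{k+1}$ to be locally finite, and likewise with ``locally nilpotent''. Applying such an implication $m$ times gives the corresponding statement from $k$ to $k+m$ for every $m\ge 0$; choosing $m$ with $\overline{k+m}=\overline{l}$ gives one direction of the equivalence, and choosing $m'$ with $\overline{l+m'}=\overline{k}$ gives the other.

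Fix $k\in\Z/n$. The one observation that makes everything work is that, because $\overline{k+n}=\overline{k}$, the composite at $k+1$ factors through $u_k$: writing $w:=u_{k+n-1}\circ\cdots\circ u_{k+1}\colon U_{k+1}\to U_k$, one has $(\pi u)_{k+1}=u_k\circ w$, whereas $(\pi u)_k=w\circ u_k$. From the pseudo-commutation relation (\ref{pseudocommutation}) one gets, by an immediate induction on $m$, that $u_k\circ (\pi u)_k^m=(\pi u)_{k+1}^m\circ u_k$ for all $m\in\N$. Hence, for any $y\in U_{k+1}$, setting $x_0:=w(y)\in U_k$, one obtains for every $m\ge 1$
$$(\pi u)_{k+1}^m(y)=(\pi u)_{k+1}^{m-1}\bigl(u_k(x_0)\bigr)=u_k\bigl((\pi u)_k^{m-1}(x_0)\bigr).$$

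I would then conclude as follows. If $(\pi u)_k$ is locally finite, pick a nonzero polynomial $q=\sum_{i=0}^d a_i t^i$ with $q\bigl((\pi u)_k\bigr)(x_0)=0$; summing the displayed identity against the $a_i$ (taking $m=i+1$) yields $\bigl(t\,q(t)\bigr)\bigl((\pi u)_{k+1}\bigr)(y)=u_k\bigl(q((\pi u)_k)(x_0)\bigr)=0$, and $t\,q(t)\neq 0$, so $(\pi u)_{k+1}$ is locally finite. If instead $(\pi u)_k$ is locally nilpotent, pick $N\in\N$ with $(\pi u)_k^N(x_0)=0$; the displayed identity with $m=N+1$ gives $(\pi u)_{k+1}^{N+1}(y)=u_k\bigl((\pi u)_k^N(x_0)\bigr)=0$, so $(\pi u)_{k+1}$ is locally nilpotent. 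There is no serious obstacle here; the only point to be careful about is that (\ref{pseudocommutation}) alone merely transports $\Ker q((\pi u)_k)$ into $\Ker q((\pi u)_{k+1})$, which controls only the image of $u_k$ inside $U_{k+1}$ — one genuinely needs the factorization $(\pi u)_{k+1}=u_k\circ w$, together with the ensuing degree shift (replacing $q$ by $t\,q$), to reach an arbitrary vector of $U_{k+1}$.
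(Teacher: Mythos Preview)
Your proof is correct and follows essentially the same approach as the paper's: both reduce to a single step around the cycle, use the intertwining relation \eqref{pseudocommutation}, and pick up the extra factor of $t$ in the annihilating polynomial via the factorization of $(\pi u)_{k+1}$ through $u_k$. The only cosmetic difference is the direction of the single step---the paper shows $(\pi u)_{l+1}$ locally finite implies $(\pi u)_l$ locally finite (applying $u_l$ first and then closing the cycle), whereas you show $(\pi u)_k$ locally finite implies $(\pi u)_{k+1}$ locally finite (closing the cycle first via $w$ and then applying $u_k$)---which is immaterial given the cyclic symmetry.
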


\begin{proof}
For each statement it suffices to consider the case where $k=l+1$ and to prove the direct implication.
Assume then that $(\pi u)_k$ is locally finite, and let $x \in U_{l.}$
Then $q\bigl((\pi u)_k\bigr)[u_l(x)]=0$ for some $q \in \F[t] \setminus \{0\}$.
Using identity \eqref{pseudocommutation}, we obtain $u_l\Bigl[q\bigl((\pi u)_l\bigr)[x]\Bigr]=0$.
Composing by $u_{l+n-1}\circ \cdots \circ u_{l+1}$, this yields
$(tq)\bigl((\pi u)_l\bigr)[x]=0$, and we conclude that $(\pi u)_l$ is locally finite.

Likewise, if $(\pi u)_k$ is locally nilpotent then, in the above proof, we can take $q=t^i$ for some non-negative integer $i$,
in which case $tq=t^{i+1}$. Hence, $(\pi u)_l$ is locally nilpotent whenever $(\pi u)_k$ is locally nilpotent.
\end{proof}

This justifies the next definition:

\begin{Def}
Let $u=(U_k,u_k)_{k \in \Z/n}$ be a linear representation of $\calC_n$. We say that it is \textbf{locally finite} (respectively, \textbf{locally nilpotent}) whenever
$(\pi u)_0$ is locally finite (respectively, locally nilpotent).
\end{Def}

Let $u=(U_k,u_k)_{k \in \Z/n}$ be a locally nilpotent linear representation of $\calC_n$, and let
$k \in \Z/n$ and $x \in U_k$. Denote by $(x_i)_{i \in \N}$ the sequence associated with $x$ for the pair $(k,u)$.
There exists an integer $p \geq 0$ such that $\bigl((\pi u)_k\bigr)^p(x)=0$, and hence
$x_i=0$ for every integer $i \geq np$.

The \textbf{local nilindex} of $x$ with respect to $u$ and $k$, denoted by $\nu_{u,k}(x)$ (or more simply by $\nu_u(x)$ when
there is no possible confusion on the class $k$), is defined as the least
integer $i \geq 0$ such that $x_i=0$.
Note that if $x \neq 0$ then $\nu_{u,k+1}(u_k(x))=\nu_{u,k}(x)-1$.

\vskip 3mm
We have seen earlier that if $(\pi u)_0$ is locally nilpotent then so is $(\pi u)_1$.
In contrast, the fact that $(\pi u)_0$ is an isomorphism does not imply that $(\pi u)_1$ is an isomorphism.
For example, if $n=2$, $U_0=\{0\}$, $U_1=\F$, $u_0$ is the linear mapping from $\{0\}$ and $\F$, and
$u_1$ the linear mapping from $\F$ to $\{0\}$, then $(\pi u)_0$ is bijective but $(\pi u)_1$ is the zero endomorphism
of $\F$.

\begin{lemma}
Let $(U_k,u_k)_{k \in \Z/n}$ be a linear representation of $\calC_n$. The following conditions are equivalent:
\begin{enumerate}[(i)]
\item All the mappings $u_k$ are isomorphisms.
\item All the mappings $(\pi u)_k$ are automorphisms.
\end{enumerate}
\end{lemma}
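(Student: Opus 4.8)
The plan is to prove the two implications separately; the first is immediate and the second rests on a single elementary observation about composites of linear maps, together with careful bookkeeping of indices modulo $n$.

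For the implication (i) $\Rightarrow$ (ii), suppose every $u_k$ is an isomorphism. Then for each $k \in \Z/n$ the endomorphism $(\pi u)_k = u_{k+n-1} \circ \cdots \circ u_{k+1} \circ u_k$ is a composite of isomorphisms, hence an isomorphism of $U_k$ onto itself, i.e.\ an automorphism. So (ii) holds.

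For the implication (ii) $\Rightarrow$ (i), fix $k \in \Z/n$ and argue that $u_k$ is both injective and surjective. For injectivity, use the factorization $(\pi u)_k = \bigl(u_{k+n-1} \circ \cdots \circ u_{k+1}\bigr) \circ u_k$: since $(\pi u)_k$ is injective and $u_k$ is its right-most (first-applied) factor, $u_k$ is injective. For surjectivity, apply hypothesis (ii) to the index $k+1$ and use the factorization $(\pi u)_{k+1} = u_k \circ \bigl(u_{k+n-1} \circ \cdots \circ u_{k+1}\bigr)$, valid because $u_{(k+1)+n-1} = u_{k+n} = u_k$; since $(\pi u)_{k+1}$ is surjective and $u_k$ is its left-most (last-applied) factor, $u_k$ is surjective. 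Hence $u_k$ is bijective, and since a bijective linear map is automatically an isomorphism of vector spaces (no finiteness assumption is needed here), $u_k$ is an isomorphism. As $k \in \Z/n$ was arbitrary, all the $u_k$ are isomorphisms.

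There is no real obstacle in this argument; the only point requiring attention is the index arithmetic modulo $n$, namely identifying $u_k$ as the first-applied factor of $(\pi u)_k$ and simultaneously as the last-applied factor of $(\pi u)_{k+1}$, so that the single map $u_k$ inherits injectivity from one composite and surjectivity from the other.
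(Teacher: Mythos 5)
Your proof is correct and matches the paper's argument in substance: both derive injectivity of $u_k$ from $(\pi u)_k$ being injective (as $u_k$ is the first-applied factor) and surjectivity of $u_k$ from the surjectivity of a composite in which $u_k$ is the last-applied factor. The only cosmetic difference is that the paper phrases the surjectivity step as ``bijectivity of $(\pi u)_k$ implies surjectivity of $u_{k-1}$'' while you shift the index and read off surjectivity of $u_k$ from $(\pi u)_{k+1}$; these are the same observation.
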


\begin{proof}
The implication (i) $\Rightarrow$ (ii) is obvious. We obtain the converse implication by noting that, for all
$k \in \Z/n$, the bijectivity of $(\pi u)_k$ implies the injectivity of $u_k$ and the surjectivity of $u_{k-1}$.
\end{proof}

\begin{Def}
A linear representation $(U_k,u_k)_{k \in \Z/n}$ of $\calC_n$ is called \textbf{regular} when all the $u_k$'s are isomorphisms.
\end{Def}

Of course, every linear representation of $\calC_n$ that is equivalent to a regular one is regular itself. Moreover,
the classification of regular cycles is entirely reduced to the one of automorphisms:

\begin{lemma}
Let $u$ and $v$ be isomorphic linear representations of $\calC_n$.
Then $(\pi u)_0$ is similar to $(\pi v)_0$.
\end{lemma}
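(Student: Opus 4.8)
The plan is to directly exhibit an isomorphism conjugating $(\pi u)_0$ into $(\pi v)_0$, namely the one already provided by the isomorphism of representations. Write $u=(U_k,u_k)_{k\in\Z/n}$ and $v=(V_k,v_k)_{k\in\Z/n}$, and let $(\varphi_k)_{k\in\Z/n}$ be a family of isomorphisms $\varphi_k:U_k\to V_k$ with $v_k\circ\varphi_k=\varphi_{k+1}\circ u_k$ for every $k\in\Z/n$ — this is precisely what it means for $u$ and $v$ to be isomorphic in $\Rep(\calC_n,\LinCat)$. The assertion to prove then becomes $(\pi v)_0=\varphi_0\circ(\pi u)_0\circ\varphi_0^{-1}$.

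First I would establish, by induction on the integer $j$ ranging over $\lcro 0,n\rcro$, the identity
$$\varphi_j\circ(u_{j-1}\circ\cdots\circ u_0)=(v_{j-1}\circ\cdots\circ v_0)\circ\varphi_0,$$
read with the convention that the empty composite is the identity of $U_0$ (resp.\ of $V_0$) when $j=0$, and with all indices taken modulo $n$. The base case $j=0$ is trivial. For the inductive step, assuming the identity for some $j<n$, one composes it on the left with $v_j$ and uses the intertwining relation $v_j\circ\varphi_j=\varphi_{j+1}\circ u_j$ to turn $v_j\circ\varphi_j\circ(u_{j-1}\circ\cdots\circ u_0)$ into $\varphi_{j+1}\circ(u_j\circ u_{j-1}\circ\cdots\circ u_0)$, which is exactly the identity at rank $j+1$.

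Applying the identity with $j=n$ yields $\varphi_n\circ(u_{n-1}\circ\cdots\circ u_0)=(v_{n-1}\circ\cdots\circ v_0)\circ\varphi_0$. Since the family $(\varphi_k)$ is indexed by $\Z/n$ we have $\varphi_n=\varphi_0$, and by definition $(\pi u)_0=u_{n-1}\circ\cdots\circ u_0$ and $(\pi v)_0=v_{n-1}\circ\cdots\circ v_0$, so this reads $\varphi_0\circ(\pi u)_0=(\pi v)_0\circ\varphi_0$. As $\varphi_0$ is invertible, we conclude $(\pi v)_0=\varphi_0\circ(\pi u)_0\circ\varphi_0^{-1}$, that is, $(\pi u)_0$ and $(\pi v)_0$ are similar.

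There is no genuine obstacle here; the only delicate point is the bookkeeping of the cyclic indices, in particular the fact that the last intertwining relation $v_{n-1}\circ\varphi_{n-1}=\varphi_n\circ u_{n-1}$ closes the loop correctly because $\varphi_n$ is literally $\varphi_0$. One could also simply note that this lemma is the special case $k=0$ of the observation made in Section~\ref{problemSection}, that equivalent $n$-cycles have similar endomorphisms $(\pi\cdot)_k$ for every $k$; the computation above is nothing more than the verification of that observation.
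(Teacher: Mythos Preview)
Your proof is correct and follows essentially the same approach as the paper: both push $\varphi_0$ through the composite $u_{n-1}\circ\cdots\circ u_0$ one factor at a time using the intertwining relations $\varphi_{k+1}\circ u_k=v_k\circ\varphi_k$, arriving at $\varphi_0\circ(\pi u)_0=(\pi v)_0\circ\varphi_0$. The only cosmetic difference is that you frame the computation as an explicit induction on $j$, whereas the paper writes it out as a chain of equalities.
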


\begin{proof}
Let us write $u=(U_k,u_k)_{k \in \Z/n}$ and $v=(V_k,v_k)_{k \in \Z/n}$.

There exists a family $(\varphi_k)_{k \in \Z/n}$
in which $\varphi_k$ is an isomorphism from $U_k$ to $V_k$ for all $k \in \Z/n$, and
$\varphi_{k+1} \circ u_k=v_k \circ \varphi_k$ for all $k \in \Z/n$. Thus
\begin{align*}
\varphi_0 \circ (\pi u)_0 & =(\varphi_0 \circ u_{\overline{n-1}}) \circ u_{\overline{n-2}} \circ \cdots \circ u_{\overline{0}} \\
& =v_{\overline{n-1}} \circ \varphi_{\overline{n-1}} \circ u_{\overline{n-2}} \circ \cdots \circ u_{\overline{0}} \\
& =\cdots=(v_{\overline{n-1}} \circ \cdots \circ v_{\overline{0}}) \circ \varphi_0=(\pi v)_0 \circ \varphi_0.
\end{align*}
As $\varphi_0$ is an isomorphism, this shows that $(\pi u)_0$ is similar to $(\pi v)_0$.
\end{proof}

\begin{prop}\label{regularclassprop}
Two regular linear representations $u$ and $v$ of $\calC_n$
are isomorphic if and only if the automorphisms $(\pi u)_0$ and $(\pi v)_0$ are similar.
\end{prop}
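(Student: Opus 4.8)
The plan is to dispatch the "only if" implication immediately---it is exactly the lemma stated just above Proposition~\ref{regularclassprop}---and then to concentrate on the converse, for which I will write down an explicit isomorphism from $u$ to $v$ out of a prescribed similarity between $(\pi u)_0$ and $(\pi v)_0$.

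Write $u=(U_k,u_k)_{k \in \Z/n}$ and $v=(V_k,v_k)_{k \in \Z/n}$, and fix an isomorphism $\psi : U_0 \to V_0$ with $(\pi v)_0 \circ \psi=\psi \circ (\pi u)_0$. For $j \in \lcro 0,n-1\rcro$ I put $a_j:=u_{\overline{j-1}} \circ \cdots \circ u_{\overline{0}} : U_0 \to U_{\overline{j}}$ and $b_j:=v_{\overline{j-1}} \circ \cdots \circ v_{\overline{0}} : V_0 \to V_{\overline{j}}$, with the convention $a_0=\id_{U_0}$ and $b_0=\id_{V_0}$; since $u$ and $v$ are regular, all the $a_j$ and $b_j$ are isomorphisms. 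I then set $\varphi_{\overline{j}}:=b_j \circ \psi \circ a_j^{-1}$ for $j \in \lcro 0,n-1\rcro$, an isomorphism from $U_{\overline{j}}$ to $V_{\overline{j}}$, so in particular $\varphi_0=\psi$. It remains to check the intertwining relations $v_k \circ \varphi_k=\varphi_{k+1} \circ u_k$ for all $k \in \Z/n$. For $k=\overline{j}$ with $j \in \lcro 0,n-2\rcro$ one has $a_{j+1}=u_{\overline{j}} \circ a_j$ and $b_{j+1}=v_{\overline{j}} \circ b_j$, and the relation follows by a one-line cancellation of $a_j^{-1}$ against $a_{j+1}^{-1}\circ u_{\overline j}$. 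For the remaining index $k=\overline{n-1}$---the ``seam'' where the composite enters---one uses $v_{\overline{n-1}} \circ b_{n-1}=(\pi v)_0$, then $(\pi v)_0 \circ \psi=\psi \circ (\pi u)_0$, and finally $\psi\circ(\pi u)_0 = \psi \circ u_{\overline{n-1}} \circ a_{n-1}$, to reduce $v_{\overline{n-1}} \circ \varphi_{\overline{n-1}}$ to $\psi \circ u_{\overline{n-1}} = \varphi_0 \circ u_{\overline{n-1}}$. Hence $(\varphi_k)_{k \in \Z/n}$ is an isomorphism from $u$ to $v$, which proves the converse.

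I expect no genuine obstacle here. The argument merely records the fact that a regular cycle is determined, up to isomorphism, by the pair $(U_0,(\pi u)_0)$---equivalently, every regular cycle is isomorphic to the ``normal form'' in which all spaces are $U_0$ and all structure maps are $\id_{U_0}$ except the one from $\overline{n-1}$ to $\overline{0}$, which is $(\pi u)_0$. The only point that needs care is the index bookkeeping modulo $n$ and, in particular, the separate treatment of the arrow from $\overline{n-1}$ to $\overline{0}$, since that is the single place where the composite $(\pi u)_0$ appears; everything else is a routine cancellation.
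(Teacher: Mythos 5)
Your proof is correct and is essentially identical to the paper's: both define $\varphi_{\overline{j}}=v_{\overline{j-1}}\circ\cdots\circ v_{\overline{0}}\circ\psi\circ u_{\overline{0}}^{-1}\circ\cdots\circ u_{\overline{j-1}}^{-1}$ (your $a_j,b_j$ notation merely packages the composites the paper writes out), verify the intertwining relation trivially for $j\le n-2$, and handle the seam index $\overline{n-1}$ by inserting the similarity $(\pi v)_0\circ\psi=\psi\circ(\pi u)_0$. No discrepancy.
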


\begin{proof}
Let us write $u=(U_k,u_k)_{k \in \Z/n}$ and $v=(V_k,v_k)_{k \in \Z/n}$.

We have just shown that $(\pi u)_0$ and $(\pi v)_0$ are similar whenever $u$ and $v$ are isomorphic.
Conversely, assume that there exists an isomorphism $h : U_0 \overset{\simeq}{\rightarrow} V_0$ such that $h \circ (\pi u)_0=(\pi v)_0 \circ h$. For all $k \in \lcro 0,n-1\rcro$, set
$$\varphi_{\overline{k}}:=v_{\overline{k-1}} \circ \cdots \circ v_{\overline{0}} \circ h \circ u_{\overline{0}}^{-1} \circ \cdots \circ u_{\overline{k-1}}^{-1},$$
which is an isomorphism from $U_k$ to $V_k$.
For all $k \in \lcro 0,n-2\rcro$, it is then straightforward that
$$v_{\overline{k}} \circ \varphi_{\overline{k}}=\varphi_{\overline{k+1}} \circ u_{\overline{k}},$$
whereas
\begin{align*}
v_{\overline{n-1}} \circ \varphi_{\overline{n-1}}
& =(\pi v)_0 \circ h \circ   u_{\overline{0}}^{-1} \circ \cdots \circ u_{\overline{n-2}}^{-1} \\
& =h \circ (\pi u)_0 \circ u_{\overline{0}}^{-1} \circ \cdots \circ u_{\overline{n-1}}^{-1} \circ u_{\overline{n-1}}   \\
& =h \circ u_{\overline{n-1}}=\varphi_{\overline{0}} \circ u_{\overline{n-1}}.
\end{align*}
Hence, the family $(\varphi_k)_{k \in \Z/n}$ defines an isomorphism from $u$ to $v$ in the category of
linear representations of $\calC_n$.
\end{proof}

For any automorphism $u$ of a vector space $U$, we define the linear representation $w=(w_k,U)_{k \in \Z/n}$
in which $w_0=u$ and $w_k=\id_U$ for all $k \in \Z/n \setminus \{0\}$.
Note that $w$ is regular and that $(\pi w)_0=u$, and in particular $w$ is locally finite if and only if $u$ is locally finite.
Hence, classifying regular (respectively, regular and locally finite) $n$-cycles of linear maps up to isomorphism amounts to classifying automorphisms (respectively, locally finite automorphisms) of vector spaces up to similarity.

Next, we demonstrate that the problem of classifying locally finite linear representations of $\calC_n$
up to isomorphism can be reduced to the one of classifying locally nilpotent linear representations.
Let $u=(U_k,u_k)_{k \in \Z/n}$ be a locally finite linear representation of $\calC_n$.
Remember that, for all $p \in \Irr(\F)$ and all $k \in \Z/n$,  the linear maps
$u_k$ maps $U_{k,p}$ into $U_{k+1,p}$, moreover the resulting linear map is even an isomorphism if $p \neq t$.
Set
$$U_{k,\nil}:=U_{k,t} \quad \text{and} \quad U_{k,\reg}:=\underset{p \in \Irr(\F) \setminus \{t\}}{\bigoplus} U_{k,p.}$$
We deduce that $u_k$ induces an isomorphism
$$u_{k,\reg} : U_{k,\reg} \overset{\simeq}{\longrightarrow} U_{k+1,\reg,}$$
and a linear mapping
$$u_{k,\nil} : U_{k,\nil} \longrightarrow U_{k+1,\nil.}$$
The linear representations
$$u_{\nil}:=(U_{k,\nil},u_{k,\nil})_{k \in \Z/n} \quad \text{and} \quad
u_{\reg}:=(U_{k,\reg},u_{k,\reg})_{k \in \Z/n}$$
of $\calC_n$ are locally nilpotent for the former, and locally finite and regular for the latter. They are called the \textbf{locally nilpotent part} and the \textbf{regular part of $u$.} The locally finite linear representation $u$ is isomorphic to the direct sum of its locally nilpotent and regular parts.

Finally, given an isomorphism $(\varphi_k)_{k \in \Z/n}$ from a locally finite linear representation
$u=(U_k,u_k)_{k \in \Z/n}$ of $\calC_n$ to a locally finite linear representation $v=(V_k,v_k)_{k \in \Z/n}$ of $\calC_n$, the mapping $\varphi_k$ induces an isomorphism from $U_{k,\nil}$ to $V_{k,\nil}$ and one from $U_{k,\reg}$ to $V_{k,\reg}$ for all $k \in \Z/n$, and from there we see that $u_{\reg}$ is isomorphic to $v_{\reg}$, and $u_{\nil}$ is isomorphic to $v_{\nil}$.
Let us conclude:

\begin{prop}
Two locally finite linear representations of $\calC_n$ are isomorphic if and only if
their regular parts are isomorphic and their locally nilpotent parts are isomorphic.
\end{prop}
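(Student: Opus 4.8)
The plan is to assemble the direct implication from the material laid out just before the statement, and to obtain the converse at once from the decomposition of a locally finite representation into its locally nilpotent and regular parts.

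First I would treat the direct implication. Suppose $\Phi=(\varphi_k)_{k \in \Z/n}$ is an isomorphism from $u=(U_k,u_k)_{k\in\Z/n}$ to $v=(V_k,v_k)_{k\in\Z/n}$, so that $\varphi_{k+1}\circ u_k=v_k\circ\varphi_k$ for every $k\in\Z/n$. Composing these identities around the cycle, exactly as in the proof that $(\pi u)_0$ is similar to $(\pi v)_0$, yields $\varphi_k\circ(\pi u)_k=(\pi v)_k\circ\varphi_k$ for every $k\in\Z/n$. Hence, for every $p\in\Irr(\F)$ and every $i\in\N$, one has $\varphi_k\circ p^i\bigl((\pi u)_k\bigr)=p^i\bigl((\pi v)_k\bigr)\circ\varphi_k$, so the isomorphism $\varphi_k$ maps $\Ker p^i\bigl((\pi u)_k\bigr)$ onto $\Ker p^i\bigl((\pi v)_k\bigr)$ (use $\varphi_k^{-1}$ for the surjectivity). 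Taking unions over $i$, $\varphi_k$ maps $U_{k,p}$ onto $V_{k,p}$, hence $U_{k,\nil}=U_{k,t}$ onto $V_{k,\nil}$ and $U_{k,\reg}=\bigoplus_{p\in\Irr(\F)\setminus\{t\}}U_{k,p}$ onto $V_{k,\reg}$. Since each $u_k$ maps $U_{k,\nil}$ into $U_{k+1,\nil}$ and $U_{k,\reg}$ into $U_{k+1,\reg}$, and likewise for the $v_k$, the families of restrictions of the $\varphi_k$ to the subspaces $U_{k,\nil}$, respectively $U_{k,\reg}$, are morphisms of representations from $u_\nil$ to $v_\nil$, respectively from $u_\reg$ to $v_\reg$; being bijective, these are isomorphisms. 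Thus $u_\nil$ is isomorphic to $v_\nil$ and $u_\reg$ is isomorphic to $v_\reg$.

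For the converse, recall that we have already observed $u$ to be isomorphic to $u_\nil\oplus u_\reg$ and $v$ to be isomorphic to $v_\nil\oplus v_\reg$, and that the external direct sum of linear representations of $\calC_n$ is unchanged up to isomorphism when a summand is replaced by an isomorphic representation. Therefore, if $u_\nil$ is isomorphic to $v_\nil$ and $u_\reg$ is isomorphic to $v_\reg$, then $u$ is isomorphic to $u_\nil\oplus u_\reg$, which is isomorphic to $v_\nil\oplus v_\reg$, which is isomorphic to $v$, and we are done.

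I do not expect a genuine obstacle: the statement is essentially a bookkeeping consequence of the primary decomposition set up above. The only point requiring (entirely routine) care is checking that the restrictions of the $\varphi_k$ to the primary components really do assemble into morphisms of representations, which is the short diagram chase afforded by the already-noted fact that each $u_k$ respects the splitting $U_k=U_{k,\nil}\oplus U_{k,\reg}$.
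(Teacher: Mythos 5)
Your proposal is correct and follows essentially the same route as the paper: the direct implication is obtained by checking that each $\varphi_k$ intertwines $(\pi u)_k$ with $(\pi v)_k$ and hence maps each primary component $U_{k,p}$ onto $V_{k,p}$, so the restrictions assemble into isomorphisms of the locally nilpotent and regular parts; the converse is immediate from the decomposition $u \simeq u_{\nil} \oplus u_{\reg}$. Nothing is missing.
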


Thanks to Proposition \ref{regularclassprop}, Kaplansky's theorem (Theorem \ref{KaplanskyTheo}) covers the classification of the regular locally finite
linear representations of $\calC_n$ with countable dimension. Hence, it only remains to classify
the locally nilpotent linear representations of $\calC_n$ with countable dimension up to isomorphism.
In the next section, we introduce the relevant invariants.

\subsection{The cyclic Kaplansky invariants}\label{invariantsSection}

We start by recalling the notion of height with respect to a continuous chain of subsets.
Let $X$ be a nonempty set. A \textbf{continuous chain} in $X$ is defined
as a non-increasing ``collection"\footnote{To avoid any logical pitfall, such a chain is defined as a functional relation $f(\alpha,Y)$ (in two variables $\alpha$ and $Y$) whose domain is the collection of all ordinals, and whose range is included in the power set of $X$ (so that, for every ordinal $\alpha$, the set
$X_\alpha$ is the unique $\beta$ such that $f(\alpha,\beta)$ holds true).
In practice, the chains we will create are obtained by transfinite inductive processes.}
$(X_\alpha)_\alpha$ of subsets of $X$ indexed over all ordinals, such that
$X_\alpha=\underset{\beta<\alpha}{\bigcap} X_\beta$ for every limit ordinal $\alpha$, and $X_0=X$.

Let $\calX=(X_\alpha)_\alpha$ be such a chain, and denote by $X_\infty$ its intersection.
For all $x \in X \setminus X_\infty$, there is a unique ordinal $\alpha$ such that $x \in X_\alpha \setminus X_{\alpha+1}$:
it is called the \textbf{height} of $x$ with respect to $\calX$, and denoted by
$h_\calX(x)$. We convene that $h_{\calX}(x):=\infty$ for all $x \in X_\infty$.
The least ordinal $\alpha$ such that $X_\alpha=X_\infty$ is called the
\textbf{length} of $\calX$ and denoted by $\ell(\calX)$: if it is a limit ordinal, then it is the supremum of the heights of the elements of $X$
with respect to $\calX$, whereas if it has a predecessor then $\ell(\calX)-1$ is the greatest height with respect to $\calX$.

Now, let $u=(U_k,u_k)_{k \in \Z/n}$ be a representation of $\calC_n$ on the category of sets.

By transfinite induction, we define, for each $k \in \Z/n$, a collection $(U_{k,\alpha})_{\alpha}$ of subsets of $U_k$
indexed over the ordinals, as follows:
\begin{itemize}
\item $U_{k,0}:=U_k$ for all $k \in \Z/n$;
\item For every ordinal $\alpha$ that has a predecessor, and for every $k \in \Z/n$, we put
$U_{k,\alpha}:=u_{k-1}(U_{k-1,\alpha-1})$;
\item For every limit ordinal $\alpha$ and every $k \in \Z/n$, we put
$U_{k,\alpha}:=\underset{\beta<\alpha}{\bigcap} U_{k,\beta}$.
\end{itemize}
By transfinite induction, we prove that the sequences $(U_{k,\alpha})_\alpha$ are all non-increasing.
Let indeed $\alpha$ be an ordinal such that the sequences $(U_{k,\beta})_{\beta<\alpha}$, for $k \in \Z/n$,
are all non-increasing.
\begin{itemize}
\item If $\alpha$ is a limit ordinal, then the very definition of $U_{k,\alpha}$ shows that $U_{k,\alpha} \subset U_{k,\beta}$ for all $\beta<\alpha$ and all $k \in \Z/n$.
\item Assume now that $\alpha$ has a predecessor.
Let $\beta<\alpha$ and $k \in \Z/n$. By induction, we get that
$$U_{k,\alpha}=u_{k-1}(U_{k-1,\alpha-1}) \subset u_{k-1}(U_{k-1,\beta}) =U_{k,\beta+1}.$$
In particular, if $\alpha-1$ has a predecessor, we apply this to $\beta:=\alpha-2$ and we deduce that
$U_{k,\alpha} \subset U_{k,\alpha-1}$. If $\alpha-1$ is a limit ordinal, we apply the above to all $\beta<\alpha-1$, and we deduce that
$$U_{k,\alpha} \subset \underset{\beta<\alpha-1}{\bigcap} U_{k,\beta+1}=U_{k,\alpha-1.}$$
\end{itemize}

Now, for all $k \in \Z/n$ and all $x \in X_k$, we denote by $h_{k,u}(x)$ the height of $x$ with respect to
$(U_{k,\alpha})_\alpha$, called the \textbf{$k$-th height of $x$ with respect to $u$.}
We shall understate the pair $(u,f)$ whenever there is no risk of confusion.
We also set
$$\ell(u):=\max_{k \in \Z/n} \ell\bigl((U_{k,\alpha})_\alpha\bigr),$$
called the \textbf{length} of $u$.
For all $k \in \Z/n$, we set
$$U_{k,\infty}:=U_{k,\ell(u)}=\underset{\alpha}{\bigcap}\, U_{k,\alpha.}$$

Now, let $u=(U_k,u_k)_{k \in \Z/n}$ be a \emph{linear} representation of $\calC_n$.
By composing $f$ with the forgetful functor from the category of $\F$-vector spaces to the category of sets,
we can use the above construction to define the length of $f$. By induction, one sees that, for all
$k \in \Z/n$, the sequence $(U_{k,\alpha})_\alpha$ is a chain of \emph{linear subspaces} of $U_k$.
Using the axiom of choice, one shows that the length of $u$ is bounded above by the
dimension of $u$; in particular if $u$ has countable dimension then the length of $u$ is a countable ordinal.

We are now ready to define the relevant invariants.

\begin{Def}
We say that $u$ is \textbf{reduced} when $U_{k,\infty}=\{0\}$ for all $k \in \Z/n$.
We say that $u$ is \textbf{saturated} when $U_{k,\infty}=U_k$ for all $k \in \Z/n$, i.e.\ when all the mappings $u_k$
are surjective.
\end{Def}

\begin{Def}
Let $k \in \Z/n$. The $(k,\infty)$-th \textbf{Kaplansky invariant} of $u$ is defined as the cardinal
$$\kappa_{k,\infty}(u):=\dim(U_{k,\infty} \cap \Ker u_k).$$
\end{Def}

Letting $k \in \Z/n$ and $\alpha$ be an ordinal, the mapping
$u_k$ induces a surjective linear mapping from $U_{k,\alpha}$ to $U_{k+1,\alpha+1}$ as well as a linear mapping from
$U_{k,\alpha+1}$ to $U_{k+1,\alpha+2}$, and hence it induces a surjective linear mapping
$$u_{k,\alpha} : U_{k,\alpha}/U_{k,\alpha+1} \twoheadrightarrow U_{k+1,\alpha+1}/U_{k+1,\alpha+2}.$$
We define the \textbf{$(k,\alpha)$-th Kaplansky invariant} of $u$ as
$$\kappa_{k,\alpha}(f):=\dim(\Ker u_{k,\alpha}).$$
We immediately give another interpretation of this Kaplansky invariant.

\begin{lemma}\label{lemma:reinterpretinvariant}
The invariant $\kappa_{k,\alpha}(f)$ is also the dimension of the quotient space $(\Ker u_k\cap U_{k,\alpha})/(\Ker u_k\cap U_{k,\alpha+1})$.
\end{lemma}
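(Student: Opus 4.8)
The plan is to identify $\Ker u_{k,\alpha}$ with the image of $\Ker u_k \cap U_{k,\alpha}$ under the canonical projection $\pi : U_{k,\alpha} \twoheadrightarrow U_{k,\alpha}/U_{k,\alpha+1}$, and then to conclude by the first isomorphism theorem; since $\kappa_{k,\alpha}(f)$ is by definition $\dim(\Ker u_{k,\alpha})$, this will give exactly the claimed equality of dimensions.

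First I would unwind the definition of $u_{k,\alpha}$: a coset $x+U_{k,\alpha+1}$ with $x \in U_{k,\alpha}$ lies in $\Ker u_{k,\alpha}$ precisely when $u_k(x) \in U_{k+1,\alpha+2}$. The key observation is that, by the successor step of the transfinite recursion defining the chains (read at the vertex $k+1$, for which $u_{(k+1)-1}=u_k$), one has the \emph{equality} $U_{k+1,\alpha+2}=u_k(U_{k,\alpha+1})$, not merely an inclusion. Hence $u_k(x) \in U_{k+1,\alpha+2}$ if and only if $u_k(x)=u_k(y)$ for some $y \in U_{k,\alpha+1}$, i.e.\ $x-y \in \Ker u_k$. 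Because the chain $(U_{k,\beta})_\beta$ is non-increasing, $y \in U_{k,\alpha+1}\subseteq U_{k,\alpha}$, so $z:=x-y$ lies in $\Ker u_k \cap U_{k,\alpha}$ and satisfies $x \equiv z \pmod{U_{k,\alpha+1}}$; conversely, every $z \in \Ker u_k \cap U_{k,\alpha}$ has $u_k(z)=0 \in U_{k+1,\alpha+2}$, so its class lies in $\Ker u_{k,\alpha}$. This establishes $\Ker u_{k,\alpha}=\pi\bigl(\Ker u_k \cap U_{k,\alpha}\bigr)$.

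It then remains to compute the kernel of the restriction of $\pi$ to $\Ker u_k \cap U_{k,\alpha}$: it equals $\bigl(\Ker u_k \cap U_{k,\alpha}\bigr)\cap U_{k,\alpha+1}=\Ker u_k \cap U_{k,\alpha+1}$, again using $U_{k,\alpha+1}\subseteq U_{k,\alpha}$. The first isomorphism theorem then yields a linear isomorphism $\Ker u_{k,\alpha}\simeq \bigl(\Ker u_k \cap U_{k,\alpha}\bigr)\big/\bigl(\Ker u_k \cap U_{k,\alpha+1}\bigr)$, and taking dimensions gives the announced formula. I expect no genuine obstacle here: this is a short diagram chase, and the only point requiring care is bookkeeping of the index shifts — in particular remembering that $U_{k+1,\alpha+2}$ \emph{is} $u_k(U_{k,\alpha+1})$ rather than merely contained in it, which is what makes the converse inclusion in the kernel computation work — together with the monotonicity of the chains already proved above.
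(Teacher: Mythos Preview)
Your proof is correct and is essentially the same as the paper's: the paper constructs the ``natural linear mapping'' $w$ from $(\Ker u_k\cap U_{k,\alpha})/(\Ker u_k\cap U_{k,\alpha+1})$ to $\Ker u_{k,\alpha}$ and checks it is bijective, which is exactly the isomorphism you obtain from the first isomorphism theorem applied to $\pi|_{\Ker u_k\cap U_{k,\alpha}}$. The key step in both arguments is the observation $U_{k+1,\alpha+2}=u_k(U_{k,\alpha+1})$, which you correctly identify and justify.
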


It follows that $\kappa_{k,\alpha}(f)>0$ if and only $\Ker u_k$ contains an element $x$ of $k$-th height $\alpha$ with respect to $f$.

\begin{proof}
We prove that the natural linear mapping $w$ from $E:=(\Ker u_k\cap U_{k,\alpha})/(\Ker u_k\cap U_{k,\alpha+1})$
to the kernel of $u_{k,\alpha}$ is an isomorphism. It is obviously injective, and hence we only need to prove its surjectivity.
So, let $x \in U_{k,\alpha}$ be such that $u_k(x) \in U_{k+1,\alpha+2}$. Hence $u_k(x)=u_k(x')$ for some $x' \in U_{k,\alpha+1}$,
to the effect that $x-x' \in (\Ker u_k) \cap U_{k,\alpha}$. Hence, the class of $x$ in $U_{k,\alpha}/U_{k,\alpha+1}$
equals the one of $x-x'$, which is the image of the class of $x-x'$ under $w$. Hence, $w$ is surjective, as claimed.
\end{proof}

All those invariants are additive with respect to direct sums:
let indeed $(u^{(i)})_{i \in I}$ be a family of linear representation of $\calC_n$.
For each $i \in I$, write $u^{(i)}=\bigl(U^{(i)}_{k},u^{(i)}_{k}\bigr)_{k \in \Z/n}$
and define $f=\bigl(U_k, u_k\bigr)_{k \in \Z/n}$ as the direct sum of the $f_i$'s.
By induction, one sees that for every $\alpha$ (ordinal or $\infty$) and every $k \in \Z/n$,
$$U_{k,\alpha}=\underset{i \in I}{\bigoplus}\, U^{(i)}_{k,\alpha},$$
and it easily follows that for every $\alpha$ (ordinal or $\infty$) and every $k \in \Z/n$,
$$\kappa_{k,\alpha}(u)=\sum_{i \in I} \kappa_{k,\alpha}(u^{(i)}).$$

Finally, let $u=(U_k,u_k)_{k \in \Z/n}$ and $v=(V_k,v_k)_{k \in \Z/n}$ be
linear representations of $\calC_n$, and let $\varphi=(\varphi_k)_{k \in \Z/n}$ be a morphism from $u$ to $v$.
By transfinite induction, we find that, for every $k \in \Z/n$ and every ordinal $\alpha$,
$\varphi_k$ maps $U_{k,\alpha}$ into $V_{k,\alpha}$. If $\varphi$ is an isomorphism, it follows that
$\varphi_k$ induces an isomorphism from $U_{k,\alpha}$ to $V_{k,\alpha}$, for every $k \in \Z/n$ and every $\alpha$ (ordinal or $\infty$). Then, for every $k \in \Z/n$ and every $\alpha$ (ordinal or $\infty$), the linear mapping $\varphi_k$ induces an isomorphism
$\varphi_{k,\alpha}$ from $U_{k,\alpha}/U_{k,\alpha+1}$ to $V_{k,\alpha}/V_{k,\alpha+1}$, and the following square
is obviously commutative:
$$\xymatrix{
U_{k,\alpha}/U_{k,\alpha+1} \ar[r]^<(0.15){u_{k,\alpha}} \ar[d]_{\varphi_{k,\alpha}}^{\simeq} & U_{k+1,\alpha+1}/U_{k+1,\alpha+2} \ar[d]^{\varphi_{k+1,\alpha+1}}_{\simeq} \\
V_{k,\alpha}/V_{k,\alpha+1} \ar[r]^<(0.15){v_{k,\alpha}} & V_{k+1,\alpha+1}/V_{k+1,\alpha+2.}
}$$
In particular, this yields
$$\kappa_{k,\alpha}(u)=\kappa_{k,\alpha}(v).$$

Moreover, assuming that $u$ and $v$ are isomorphic, $u$ is saturated if and only if $v$ is saturated, and
$u$ is reduced if and only if $v$ is reduced.

\subsection{Main results}

We are now ready to state our main results. Remember that $\omega_1$ denotes the first uncountable ordinal.
If a linear representation $u$ has countable dimension then its length is countable and it follows that
$\kappa_{k,\alpha}(u)=0$ for every uncountable ordinal $\alpha$ and every $k \in \Z/n$.

\begin{theo}[Cyclic Kaplansky Theorem]\label{mainTheoreduced}
Let $u$ and $v$ be reduced locally nilpotent linear representations of $\calC_n$ with countable dimension.
Then $u$ and $v$ are isomorphic if and only if
$\kappa_{k,\alpha}(u)=\kappa_{k,\alpha}(v)$ for all $k \in \Z/n$ and all $\alpha \in \omega_1$.
\end{theo}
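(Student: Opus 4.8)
The forward implication is already established in the excerpt: isomorphic representations have equal Kaplansky invariants. So the whole content is the converse. The plan is to reduce the cyclic problem to the known case $n=1$ (the Mackey–Kaplansky Theorem, Theorem~\ref{KaplanskyTheo}) by manufacturing, out of a reduced locally nilpotent $n$-cycle $u=(U_k,u_k)_{k\in\Z/n}$, a single locally nilpotent endomorphism whose ordinary Kaplansky invariants are read off the cyclic ones.

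The construction is the following. Form the $\F$-vector space $W:=\bigoplus_{k\in\Z/n}U_k$ (countable dimension since each $U_k$ has), and let $T\in\End(W)$ be the ``total map'' induced by the $u_k$'s: on the summand $U_k$, $T$ acts as $u_k:U_k\to U_{k+1}\subset W$. Then $T^n$ restricted to $U_k$ is exactly $(\pi u)_{k+1}\circ\cdots$, i.e.\ a conjugate of $(\pi u)_k$ up to the cyclic shift; since $(\pi u)_0$ is locally nilpotent, so is each $(\pi u)_k$ by the first Lemma in Section~\ref{locallyfiniteSection}, and hence $T$ is locally nilpotent. The key computation is then to identify, for $T$, the filtration $t^\alpha W$ (the $\F[t]$-module filtration from Section~\ref{problemSection}, with $t$ acting as $T$) in terms of the cyclic filtration $(U_{k,\alpha})_\alpha$ of Section~\ref{invariantsSection}. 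I expect precisely
$$t^{\alpha}W \;=\; \bigoplus_{k\in\Z/n} U_{k,\alpha}\quad\text{for every ordinal }\alpha,$$
proved by transfinite induction: the successor step uses $T(U_{k,\alpha-1})=u_k(U_{k,\alpha-1})\subset U_{k+1,\alpha}$ together with the fact (noted in the excerpt) that $u_{k}$ maps $U_{k,\alpha}$ \emph{onto} $U_{k+1,\alpha+1}$, so that $T$ applied to $\bigoplus_k U_{k,\alpha-1}$ has image exactly $\bigoplus_k U_{k,\alpha}$; the limit step is immediate from the definitions of both filtrations as intersections. Reducedness of $u$ means $U_{k,\infty}=\{0\}$ for all $k$, hence $t^{\infty}W=\{0\}$, i.e.\ $T$ is reduced, so the invariant $\kappa_\infty(T,t)$ vanishes and only the ordinal-indexed invariants matter.

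From the displayed identity one reads off $t^\alpha W/t^{\alpha+1}W=\bigoplus_{k}U_{k,\alpha}/U_{k,\alpha+1}$, and the multiplication-by-$t$ map $u_{\alpha,t}$ on this quotient is, blockwise, the family of the surjections $u_{k,\alpha}:U_{k,\alpha}/U_{k,\alpha+1}\twoheadrightarrow U_{k+1,\alpha+1}/U_{k+1,\alpha+2}$ from Section~\ref{invariantsSection}. Therefore its kernel is $\bigoplus_{k}\Ker u_{k,\alpha}$, and the ordinary Kaplansky invariant of $T$ satisfies
$$\kappa_\alpha(T,t)\;=\;\sum_{k\in\Z/n}\kappa_{k,\alpha}(u).$$
This alone is not enough to recover the individual $\kappa_{k,\alpha}(u)$, so the endomorphism $T$ must be enriched to remember the grading by $\Z/n$. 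The clean way is to work instead with the \emph{regular part} trick already used in the excerpt: tensor up, or rather pass to the representation-level statement. Concretely, I would not use a single $T$ but instead argue as follows: decompose $u$ as a direct sum of \emph{infinite Jordan cycle-cells} plus \emph{finite Jordan cycle-cells} (the latter to be introduced, parametrised by a base $l\in\Z/n$ and a nilindex), using a Kaplansky-style transfinite stripping argument inside the category $\Rep(\calC_n,\F)$ that mimics Kaplansky's proof for $n=1$: at stage $\alpha$, choose in $\Ker u_k\cap U_{k,\alpha}$ a complement of $\Ker u_k\cap U_{k,\alpha+1}$, and attach to each basis vector a Jordan cycle-cell of the appropriate type; countability of dimension guarantees the process terminates and that a ``back-and-forth'' / Ulm-type uniqueness argument applies. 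The multiplicities of the cell of base $l$ and height profile $\alpha$ are exactly the $\kappa_{l,\alpha}(u)$ by Lemma~\ref{lemma:reinterpretinvariant}, whence equality of all cyclic Kaplansky invariants forces the two direct-sum decompositions to be term-by-term isomorphic, hence $u\simeq v$.

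The main obstacle is the uniqueness/termination part of this transfinite decomposition: the $n=1$ case rests on the deep Kaplansky–Mackey classification of countably generated modules over a complete DVR, and one must check that the cyclic grading can be carried through that machinery — equivalently, that the functor $u\mapsto T$ above, suitably refined to land in graded $\F[t]$-modules (modules over the skew/graded ring $\F[t]$ with the $\Z/n$-action), is an equivalence onto its image and reflects isomorphisms. I expect the cleanest route is to observe that a locally nilpotent representation of $\calC_n$ is the same datum as a locally nilpotent $\F[t^n]$-module structure refining the $\Z/n$-grading on $W$, reduce to $p=t$ being the only prime, and invoke Theorem~\ref{KaplanskyTheo} on each graded piece after checking that the Kaplansky filtration is compatible with the grading (which is the displayed identity $t^\alpha W=\bigoplus_k U_{k,\alpha}$). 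Once that compatibility is in hand, graded refinement of the Mackey–Kaplansky uniqueness argument is routine, and the theorem follows.
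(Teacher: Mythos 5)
Your reduction to a single endomorphism $T$ on $W=\bigoplus_k U_k$ is correctly analyzed (the identity $t^\alpha W=\bigoplus_k U_{k,\alpha}$ does hold, and you rightly observe that $\kappa_\alpha(T,t)=\sum_k\kappa_{k,\alpha}(u)$ loses the grading), but the argument you substitute for it has a genuine gap. You propose to decompose $u$ as a direct sum of infinite and finite Jordan cycle-cells by a transfinite stripping process and then match multiplicities. Such a decomposition does not exist in general: already for $n=1$, a reduced locally nilpotent endomorphism of a countable-dimensional space need not be a direct sum of finite Jordan blocks — if it were, all Kaplansky invariants $\kappa_\alpha$ at infinite ordinals $\alpha$ would vanish, whereas the realization results (Theorem \ref{admissibleTheo} in the cyclic setting, Zippin's theorem for $n=1$) produce reduced representations with nonzero invariants at arbitrary countable ordinals. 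This is precisely why the classical Ulm--Kaplansky theory cannot proceed by exhibiting a canonical direct-sum decomposition and instead uses a height-preserving back-and-forth extension argument.

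Your fallback — that a ``graded refinement of the Mackey--Kaplansky uniqueness argument is routine'' — is exactly where the real work lies, and it is not supplied. One cannot invoke Theorem \ref{KaplanskyTheo} ``on each graded piece'': that theorem produces a similarity of $\F[t]$-modules with no control over compatibility with the $\Z/n$-grading, and an ungraded isomorphism of the associated modules does not yield an isomorphism of representations of $\calC_n$ (the individual $\kappa_{k,\alpha}$, not just their sum over $k$, must be matched, as your own computation shows). The paper fills this gap by proving a cyclic Extension Theorem from scratch: a zigzag construction of finite-dimensional \emph{coherent} (height-preserving) subrepresentations of $u\times v$, whose key step (the Simple Extension Theorem) requires a delicate case analysis on whether the relevant height is a successor or a limit ordinal, together with the notion of a vector $k$-adapted to a subspace and Lemma \ref{caracadaptedkernel} relating adaptedness of kernel vectors to the invariants $\kappa_{k,\alpha}$. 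That is the content of Sections \ref{extensiontheoSubsection}--\ref{proofextensionSection}, and your proposal does not reproduce it.
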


\begin{theo}\label{mainTheogeneral}
Let $u$ and $v$ be locally nilpotent linear representations of $\calC_n$ with countable dimension.
Then $u$ and $v$ are isomorphic if and only if
$\kappa_{k,\alpha}(u)=\kappa_{k,\alpha}(v)$ for all $k \in \Z/n$ and all $\alpha \in \omega_1 \cup \{\infty\}$.
\end{theo}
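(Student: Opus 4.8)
The plan is to reduce Theorem~\ref{mainTheogeneral} to the reduced case already covered by Theorem~\ref{mainTheoreduced}, by splitting off a ``saturated part''. Given a locally nilpotent linear representation $u=(U_k,u_k)_{k\in\Z/n}$ of $\calC_n$ of countable dimension, note that $U_{k,\infty}=U_{k,\ell(u)+1}=u_{k-1}(U_{k-1,\ell(u)})=u_{k-1}(U_{k-1,\infty})$, so the restrictions of the arrows produce a sub-representation $u_\infty:=\bigl(U_{k,\infty},\,u_k|_{U_{k,\infty}}\bigr)_{k\in\Z/n}$ that is \emph{saturated}; a routine transfinite induction moreover shows that any saturated sub-representation of $u$ is contained in $u_\infty$, so $u_\infty$ is the largest one. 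I would then establish two lemmas. The first (Lemma~A) is that the inclusion $u_\infty\hookrightarrow u$ \emph{splits}, i.e.\ $u\simeq u_\infty\oplus u'$ for a sub-representation $u'$; the complementary representation $u'$ is then automatically \emph{reduced} and locally nilpotent of countable dimension, since additivity of the construction $\alpha\mapsto U_{k,\alpha}$ gives $U_{k,\alpha}(u)=U_{k,\infty}\oplus U_{k,\alpha}(u')$ for every ordinal $\alpha$ (using $U_{k,\alpha}(u_\infty)=U_{k,\infty}$), whence $U_{k,\infty}(u')=\{0\}$. The second (Lemma~B) is a structure theorem for saturated representations: every saturated locally nilpotent representation $w$ of $\calC_n$ of countable dimension is isomorphic to $\bigoplus_{l\in\Z/n}\bigl(R^l(j)\bigr)^{\oplus m_l}$ with $m_l\in\N\cup\{\aleph_0\}$, where $R^l(j)$ is the canonical infinite Jordan cycle-cell of base $l$; consequently two such representations are isomorphic if and only if they have the same family $\bigl(\kappa_{k,\infty}\bigr)_{k\in\Z/n}$, because $\kappa_{k,\infty}(w)=\dim\Ker w_k=m_k$ while $\kappa_{k,\alpha}(w)=0$ for every ordinal $\alpha$ (all the $U_{k,\alpha}(w)$ equalling $U_k$).

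Granting these two lemmas, the proof is quick. The direct implication --- isomorphic representations have equal invariants --- has already been checked in Section~\ref{invariantsSection}. Conversely, assume $\kappa_{k,\alpha}(u)=\kappa_{k,\alpha}(v)$ for all $k\in\Z/n$ and all $\alpha\in\omega_1\cup\{\infty\}$; since all these invariants vanish for uncountable ordinals, they in fact agree for \emph{all} ordinals. Write $u\simeq u_\infty\oplus u'$ and $v\simeq v_\infty\oplus v'$ as above. For an ordinal $\alpha$, additivity together with $\kappa_{k,\alpha}(u_\infty)=\kappa_{k,\alpha}(v_\infty)=0$ gives $\kappa_{k,\alpha}(u')=\kappa_{k,\alpha}(u)=\kappa_{k,\alpha}(v)=\kappa_{k,\alpha}(v')$; and since reduced representations have vanishing $(k,\infty)$-invariants, $\kappa_{k,\infty}(u_\infty)=\kappa_{k,\infty}(u)=\kappa_{k,\infty}(v)=\kappa_{k,\infty}(v_\infty)$. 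Theorem~\ref{mainTheoreduced} applied to the reduced representations $u'$ and $v'$ yields $u'\simeq v'$, and Lemma~B applied to the saturated representations $u_\infty$ and $v_\infty$ yields $u_\infty\simeq v_\infty$; hence $u\simeq u_\infty\oplus u'\simeq v_\infty\oplus v'\simeq v$.

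The main obstacle is Lemma~A, the splitting off of the saturated part --- the analogue, in this cyclic setting, of the classical fact that the maximal divisible submodule of a module over a discrete valuation ring is a direct summand. I expect the cleanest route is to show first that each canonical infinite Jordan cycle-cell $R^l(j)$ is an \emph{injective} object in the category of linear representations of $\calC_n$ (checking the lifting property directly against the relevant extensions, or realizing $R^l(j)$ as an injective hull), and then to deduce from Lemma~B that $u_\infty$, being a countable direct sum of such cells, is injective within the countable-dimensional setting, so that $u_\infty\hookrightarrow u$ splits; a more hands-on alternative is to construct the complementary subspaces $C_k$ to $U_{k,\infty}$ in $U_k$ directly by a transfinite procedure compatible with the arrows, exploiting countable dimension. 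Lemma~B itself also requires honest work: one transposes to cycles the classical argument classifying countable-dimensional $p$-primary divisible modules as direct sums of Prüfer modules, with the infinite Jordan cycle-cells playing the role of the Prüfer modules.
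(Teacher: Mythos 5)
Your decomposition into a saturated part $u_\infty$ and a reduced complement is exactly the strategy of Section~\ref{SaturatedSection}, and your Lemmas~A and~B correspond precisely to Proposition~\ref{direcfactorProp} (together with the observation immediately after it that the complement $u_{|V}$ is reduced and isomorphic to $u_{\red}$) and to Theorem~\ref{saturatedJordanTheo} with the subsequent multiplicity computation $\kappa_{k,\infty}(w)=\card(I_k)$. The one place your sketch diverges in spirit from the text: for Lemma~A the paper does not invoke injectivity of the Jordan cycle-cells; it runs a direct Zorn's-lemma argument on the poset of subrepresentations linearly disjoint from $(U_{k,\infty})_k$, which is closer to your ``hands-on alternative'' than to the injective-hull route. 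It is also worth noting that both Proposition~\ref{direcfactorProp} and Theorem~\ref{saturatedJordanTheo} are proved in the paper without any countability hypothesis, so the countable-dimension assumption is used only in the reduced case via Theorem~\ref{mainTheoreduced}; you should not need to lean on countability for the splitting.
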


To state our last main result, we need the notion of an adapted basis. Here, a basis of a vector space
is meant to be a \emph{subset} of it (not a family of vectors).

\begin{Def}
Let $u=(U_k,u_k)_{k \in \Z/n}$ be a linear representation of $\calC_n$.
An \textbf{adapted basis} for $u$ is a family $(\bfB_k)_{k \in \Z/n}$ in which, for all
$k \in \Z/n$, $\bfB_k$ is a basis of the vector space $U_k$ and $u_k$ maps every vector in $\bfB_k$ to a vector in $\bfB_{k+1}$ or to the zero vector of $U_{k+1}$.
\end{Def}

\begin{theo}[Adapted basis theorem]\label{adaptedbasisTheorem}
Let $u$ be a locally nilpotent linear representation of $\calC_n$ with countable dimension.
Then $u$ has an adapted basis.
\end{theo}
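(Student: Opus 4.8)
The plan is to deduce the theorem from the Cyclic Kaplansky Theorem (Theorem \ref{mainTheogeneral}) together with the additivity of the Kaplansky invariants under direct sums. The starting remark is that the property ``having an adapted basis'' is an isomorphism invariant: if $\varphi=(\varphi_k)_{k \in \Z/n}$ is an isomorphism from $u=(U_k,u_k)_{k \in \Z/n}$ onto $v=(V_k,v_k)_{k \in \Z/n}$ and $(\bfB_k)_{k \in \Z/n}$ is an adapted basis for $u$, then $(\varphi_k(\bfB_k))_{k \in \Z/n}$ is an adapted basis for $v$, since $v_k(\varphi_k(b))=\varphi_{k+1}(u_k(b)) \in \varphi_{k+1}(\bfB_{k+1}) \cup \{0\}$ for $b \in \bfB_k$. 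Hence it is enough to produce, for the family of invariants $\bigl(\kappa_{k,\alpha}(u)\bigr)_{k \in \Z/n,\ \alpha \in \omega_1 \cup \{\infty\}}$, one locally nilpotent representation of $\calC_n$ of countable dimension that realizes exactly these invariants and that visibly possesses an adapted basis; Theorem \ref{mainTheogeneral} then yields that $u$ is isomorphic to it, and we are done.

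First I would peel off the $\infty$-part. The representation $\bigoplus_{k \in \Z/n} \bigl(R^k(j)\bigr)^{\oplus \kappa_{k,\infty}(u)}$ is saturated (a direct sum of the surjections defining the infinite Jordan cycle-cells), so all its Kaplansky invariants of ordinal index vanish, while $\kappa_{k,\infty}$ of it equals $\kappa_{k,\infty}(u)$; and the defining bases of the Jordan cycle-cells provide an obvious adapted basis. By additivity it then suffices to realize the ``finite-height'' family $\bigl(\kappa_{k,\alpha}(u)\bigr)_{k \in \Z/n,\ \alpha<\omega_1}$ by a reduced, countable-dimensional, locally nilpotent representation $w$ of $\calC_n$ equipped with an adapted basis; the direct sum $w \oplus \bigoplus_k (R^k(j))^{\oplus \kappa_{k,\infty}(u)}$ is then the required model. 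As a preliminary I would check that the saturated subrepresentation $(U_{k,\infty})_k$ of $u$ — it is saturated because $u_k(U_{k,\infty})=U_{k+1,\infty}$ — splits off as a direct summand of $u$; this guarantees that the finite-height family above is the invariant family of a genuinely reduced representation, so that the construction of $w$ below is not obstructed.

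The heart of the argument is the construction of $w$, which I would carry out by transfinite recursion on $\alpha<\omega_1$, realizing $w$ as an (at most countable) direct sum of \emph{pendant trees}. Each pendant tree is a subrepresentation spanned by a set of basis vectors on which the structure maps act by sending each vector to its parent or to $0$, with a single vector at the top lying in the appropriate $\Ker w_k$; a top vector of finite $k$-height $m$ only requires a finite Jordan cell of length $m+1$, whereas a top vector of infinite $k$-height $\alpha$ requires an infinite \emph{fan} of chains whose heights are cofinal in $\alpha$, recursively nested and bounded so that no vector overshoots its intended height. Such a fan unavoidably also contributes kernel vectors of every smaller height, so at stage $\alpha$ one must count the kernel vectors of $k$-height $\alpha$ already supplied by the fans introduced at earlier stages and attach only the residual number of fresh minimal trees of exact height $\alpha$. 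The main obstacle is precisely to make all of this work: one has to verify that the residual counts are never negative, that $w$ is reduced and of countable dimension, and that its Kaplansky invariants come out exactly as prescribed at every ordinal, the limit ordinals being the delicate case. This amounts to proving the existence complement of the Cyclic Kaplansky Theorem (the analogue, for representations of $\calC_n$, of Zippin's theorem for countable reduced $p$-groups) while keeping an explicit adapted basis in view throughout. If the proof of Theorem \ref{mainTheogeneral} already puts every representation into such a ``direct sum of pendant trees'' normal form, then the present theorem follows from that normal form at once.
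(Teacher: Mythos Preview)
Your architecture matches the paper's: split off the saturated part (handled by infinite Jordan cycle-cells, which visibly carry adapted bases), and for the reduced part build a model with an explicit adapted basis and the same invariants, then invoke the Cyclic Kaplansky Theorem. The paper formalizes your ``pendant trees'' as \emph{terminal representations} of $\calC_n$ on the category of pointed sets (Section \ref{terminalSection}); the linear realization of any such object carries its standard basis as an adapted basis, and its Kaplansky invariants coincide with explicit discrete counts $n_{k,\alpha}(f)$ (Lemma \ref{realizationlemma}). So far your plan and the paper's are the same.

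The gap you yourself flag as ``the main obstacle'' is genuine, and it is where the paper spends essentially all of Section \ref{AdmissibleSection}. Your phrase ``residual counts never negative'' does not capture the actual constraint: the obstruction is an \emph{admissibility} condition sensitive to the cyclic position $k$, not just to the ordinal $\alpha$ (Lemma \ref{supportconstraintlemma}). Concretely, if $\kappa_{k,\gamma}(u)=0$ for all $\gamma$ in some tail $[\beta,\delta)$ below a limit ordinal $\delta$, then $\kappa_{k+1+l,\delta+l}(u)=0$ for every integer $l\geq 0$. This forces the cofinal ``fan'' below a kernel vector of limit height $\delta$ at position $k+1$ to be anchored at the specific residue class $k$ in $\Z/n$; a transfinite recursion on $\alpha$ alone, attaching trees and subtracting residuals, does not see this coupling and will not obviously terminate with the correct invariants at every position. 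The paper first proves that the invariant family of any reduced $u$ is admissible in this sense, and then proves a Realization Theorem (Theorem \ref{realizationtheo}) producing a countable terminal representation with any prescribed admissible family of discrete counts. That proof goes through two non-obvious partitioning lemmas and an augmentation/pointed-sum construction, and is substantially more delicate than your sketch suggests. Once the Realization Theorem is in hand, the Adapted Basis Theorem follows in a few lines exactly as you outline.
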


The remainder of the article is laid out as follows. In Section \ref{SaturatedSection}, we prove that
every locally nilpotent linear representation of $\calC_n$ splits into the direct sum of a saturated one and of
a reduced one. The classification of the saturated representations is carried out in Section \ref{SaturatedClassificationSection} and shown to be governed by the $\kappa_{k,\infty}(u)$ invariants.
The classification of the reduced representations is obtained in Section \ref{extensionSection} thanks to an adaptation of the
Mackey-Kaplansky extension theorem to cycles of linear mappings. In Section \ref{AdmissibleSection}, we study the possible
Kaplansky invariants for a reduced linear representation of $\calC_n$ with countable dimension, and from there we deduce Theorem \ref{adaptedbasisTheorem}.

In a subsequent article, we will use the results in the case $n=2$ to obtain a classification of the pairs
$(u,v)$ of locally finite endomorphisms of respective countable-dimensional vector spaces $U$ and $V$
for which there are linear maps $a \in \Hom(U,V)$ and $b \in \Hom(V,U)$ such that $b\circ a=u$ and $a\circ b=v$.

\section{Isolating and analyzing the saturated representations}\label{SaturatedSection}

\subsection{Subrepresentations, saturated and regular parts}

Let $u=(U_k,u_k)_{k \in \Z/n}$ be a linear representation of $\calC_n$.
We define a \textbf{subrepresentation} of $u$ as a family $V=(V_k)_{k \in \Z/n}$
in which:
\begin{itemize}
\item For all $k \in \Z/n$,  $V_k$ is a linear subspace of $U_k$;
\item For all $k \in \Z/n$, one has $u_k(V_k) \subset V_{k+1}$.
\end{itemize}
Such a subrepresentation then yields:
\begin{itemize}
\item The linear representation
$\bigl(V_k,(u_k)_{|V_k}\bigr)_{k \in \Z/n}$ of $\calC_n$,
denoted by $u_{|V}$;
\item The induced linear representation
$\bigl(U_k/V_k,\, \overline{u_k}\bigr)_{k \in \Z/n}$, where for each $k \in \Z/n$
one denotes by $\overline{u_k}$ the linear mapping from $U_k/V_k$ to $U_{k+1}/V_{k+1}$ induced by $u$.
We denote this representation by $u \modu V$.
\end{itemize}
If $u$ is locally nilpotent, then so are $u_{|V}$ and $u \modu V$.

For example, the family $U_\infty:=(U_{k,\infty})_{k \in \Z/n}$ is a subrepresentation of $u$,
and since $u_k(U_{k,\infty})=U_{k+1,\infty}$ for all $k \in \Z/n$, we see that
the representation $u_{|U_\infty}$ is saturated. We denote it by $u_\infty$ and call it the \textbf{saturated part of $u$.}
Besides, the induced representation $u \modu U_\infty$ is denoted by $u_{\red}$ and called the \textbf{reduced part of $u$}.
As we shall see shortly, $u_{\red}$ is reduced, which justifies the terminology.

Suppose now that we have two linear representations $u=(U_k,u_k)_{k \in \Z/n}$ and
$v=(V_k,v_k)_{k \in \Z/n}$ of $\calC_n$ and an isomorphism $\varphi$ from
$u$ to $v$. Let $(W_k)_{k \in \Z/n}$ be a subrepresentation of $u$. Then $W':=(\varphi_k(W_k))_{k \in \Z/n}$
is a subrepresentation of $v$, and one checks that $v_{|V'}$ is isomorphic to $u_{|V}$ and that
$v \modu V'$ is isomorphic to $u \modu V$.
In particular, we have seen towards the end of Section \ref{invariantsSection} that
$\varphi_k(U_{k,\infty})=V_{k,\infty}$ for all $k \in \Z/n$,
and hence $u_\infty$ is isomorphic to $v_\infty$, and $u_{\red}$ is isomorphic to $v_{\red}$.

We finish with two additional notions:
\begin{itemize}
\item Two subrepresentations $(V_k)_{k \in \Z/n}$ and  $(W_k)_{k \in \Z/n}$
are called \textbf{linearly disjoint} when $V_k \cap W_k=\{0\}$ for all $k \in \Z/n$.

\item We define an ordering of the set of all subrepresentations of $u$ as follows:
$(V_k)_{k \in \Z/n} \leq (W_k)_{k \in \Z/n}$ if and only if $V_k \subset W_k$ for all $k \in \Z/n$.
\end{itemize}

\subsection{Extracting a saturated cycle}\label{extractsaturatedSection}

\begin{prop}\label{direcfactorProp}
Let $u=(U_k,u_k)_{k \in \Z/n}$ be a locally nilpotent linear representation of $\calC_n$.
Then there exists a subrepresentation $(V_k)_{k \in \Z/n}$ of $u$ such that
$U_k=U_{k,\infty} \oplus V_k$ for all $k \in \Z/n$.
\end{prop}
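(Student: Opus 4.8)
The goal is to split, compatibly with the cycle structure, each $U_k$ as $U_{k,\infty}\oplus V_k$ where $(V_k)_k$ is a subrepresentation. The obvious first move is to choose, for each $k$, \emph{some} complement $W_k$ of $U_{k,\infty}$ in $U_k$ by the axiom of choice; the trouble is that $u_k(W_k)$ need not lie in $W_{k+1}$, so $(W_k)_k$ is not a subrepresentation. We must correct this. The plan is to fix $k=0$, keep a complement $V_0$ of $U_{0,\infty}$ in $U_0$, and then \emph{propagate} it around the cycle by setting, for $1\le k\le n-1$,
\[
V_k := u_{k-1}\circ u_{k-2}\circ\cdots\circ u_0(V_0) \;+\; \text{(something in $U_{k,\infty}$ only if forced)}.
\]
More precisely: define $V_k := (u_{k-1}\circ\cdots\circ u_0)(V_0)$ for $k=1,\dots,n-1$ and check whether $(V_0,\dots,V_{n-1})$ works. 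We clearly have $u_k(V_k)\subset V_{k+1}$ for $k=0,\dots,n-2$ by construction, and $u_{n-1}(V_{n-1})=(u_{n-1}\circ\cdots\circ u_0)(V_0)=(\pi u)_0(V_0)$; since $(\pi u)_0$ is locally nilpotent, $(\pi u)_0(V_0)$ need not be contained in $V_0$, so this naive attempt fails at the closing edge of the cycle. So more care is needed.

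The real plan is a transfinite/inductive construction exploiting the filtration $(U_{k,\alpha})_\alpha$. Work one "layer" at a time: since $u$ is locally nilpotent, $U_{k,\infty}=\{0\}$ would make the statement trivial, so the point is genuinely about $U_{k,\infty}$, which is the saturated part (all $u_k$ restrict to surjections $U_{k,\infty}\twoheadrightarrow U_{k+1,\infty}$, in fact — being locally nilpotent and saturated forces $U_{k,\infty}=\{0\}$! because a nonzero vector of $U_{k,\infty}$ has a preimage chain of infinite length, contradicting local nilpotency). Let me use that: \textbf{first I would prove $U_{k,\infty}=\{0\}$ for all $k$.} Indeed, if $x\in U_{k,\infty}\setminus\{0\}$, then for every $m\ge 1$ we can write $x=(u_{k-1}\circ\cdots\circ u_{k-m})(y_m)$ for some $y_m$ (using saturation of $u_\infty$, i.e.\ $u_j(U_{j,\infty})=U_{j+1,\infty}$ repeatedly, which follows from the successor step of the filtration's definition together with the limit step at $\infty$); iterating past a full turn $m=np$ shows $x$ lies in the image of $(\pi u)_k^{\,p}$ for all $p$, and choosing a preimage $z$ with $(\pi u)_k^{\,p}(z)=x$, local nilpotency of $(\pi u)_k$ kills $z$ for $p$ large, forcing $x=0$. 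Hence $U_{k,\infty}=\{0\}$, and the proposition holds with $V_k=U_k$ for every $k$.

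I expect the only real subtlety to be the claim $u_j(U_{j,\infty})=U_{j+1,\infty}$, i.e.\ surjectivity of the maps between the intersection spaces. This needs: for a limit-free argument, $U_{j+1,\infty}=\bigcap_\alpha U_{j+1,\alpha}$ and $U_{j+1,\alpha+1}=u_j(U_{j,\alpha})$, so $U_{j+1,\infty}=\bigcap_\alpha u_j(U_{j,\alpha})$; one then shows $\bigcap_\alpha u_j(U_{j,\alpha}) = u_j\bigl(\bigcap_\alpha U_{j,\alpha}\bigr)=u_j(U_{j,\infty})$ by noting that the chain $(U_{j,\alpha})_\alpha$ stabilizes at some ordinal $\le\dim u$ (as remarked in the text), so the intersection is attained, and commuting $u_j$ with a stabilized intersection is immediate. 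The rest is the short local-nilpotency argument above. So the final write-up is: stabilize the chains, deduce $u_j(U_{j,\infty})=U_{j+1,\infty}$, deduce $U_{k,\infty}=\{0\}$ via local nilpotency, and conclude with $V_k:=U_k$.
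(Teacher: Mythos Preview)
Your central claim --- that $U_{k,\infty}=\{0\}$ for every locally nilpotent representation --- is false, and this is the whole difficulty of the proposition. The canonical infinite Jordan cycle-cell (defined in the paper) is a locally nilpotent representation in which every $u_k$ is surjective, so $U_{k,\alpha}=U_k$ for every ordinal $\alpha$ and hence $U_{k,\infty}=U_k\neq\{0\}$. Concretely for $n=1$: take $U_0=\F^{(\N)}$ with basis $(e_i)_{i\ge 0}$ and let $u_0$ be the backward shift $e_i\mapsto e_{i-1}$ (with $e_{-1}:=0$); this is locally nilpotent yet surjective, so $U_{0,\infty}=U_0$.

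The logical slip is in the sentence ``choosing a preimage $z$ with $(\pi u)_k^{\,p}(z)=x$, local nilpotency of $(\pi u)_k$ kills $z$ for $p$ large, forcing $x=0$.'' Local nilpotency says that \emph{each fixed} $z$ is annihilated by some power $q(z)$ of $(\pi u)_k$; it does \emph{not} say that for large $p$ every preimage of $x$ under $(\pi u)_k^{\,p}$ dies in at most $p$ steps. In the shift example, $x=e_0$ has the preimage $z_p=e_p$ under $S^p$, and $S^q(e_p)=0$ only for $q\ge p+1$, so one never obtains $x=0$. In short, you have conflated local nilpotency with nilpotency (equivalently, $\bigcap_p\im f^p$ can be nonzero for a locally nilpotent $f$).

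The paper's proof goes by Zorn's lemma: one considers the (nonempty, inductive) poset of subrepresentations linearly disjoint from $U_\infty$, picks a maximal element $(V_k)_{k\in\Z/n}$, and shows by contradiction that $U_k=U_{k,\infty}\oplus V_k$ for all $k$. The key step uses local nilpotency only to guarantee that the sequence $x,u_k(x),u_{k+1}u_k(x),\dots$ eventually hits $0$ (hence eventually lands in $U_{k+i,\infty}\oplus V_{k+i}$), so one can pick the \emph{last} term outside its target sum and enlarge $V$ there by one line, contradicting maximality. Surjectivity of $u_{|U_\infty}$ is used to absorb the $U_\infty$-component of $u_k(x)$ back into a correction of $x$. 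None of this requires $U_\infty$ to vanish.
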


\begin{proof}
Consider the set $\calX$ of all subrepresentations of $u$ that are linearly disjoint from $U_\infty$.
Let us show that this set is inductive. Firstly, it is non-empty (it contains $(\{0_{U_k}\})_{k \in \Z/n}$). Next,
consider a totally ordered subset $\calY$ of $\calX$, and, for each $k \in \Z/n$, define
$V_k:=\underset{Y \in \calY}{\bigcup} Y_k$, which is obviously a linear subspace of $U_k$ that is linearly disjoint from $U_{k,\infty}$,
and note that $u_k$ maps $V_k$ into $V_{k+1}$. Hence, $(V_k)_{k \in \Z/n}$ belongs to $\calX$, and it is obviously
greater than or equal to every element of $\calY$. Hence, by Zorn's lemma, there is
a maximal element $(V_k)_{k \in \Z/n}$ in $\calX$.

We claim that $U_k=U_{k,\infty} \oplus V_k$ for all $k \in \Z/n$.
Assume the contrary and choose $k \in \Z/n$ and $x \in U_k$ such that $x \not\in U_{k,\infty} \oplus V_k$.
Denote by $(x_i)_{i \geq 0}$ the sequence associated with $x$ for the pair $(k,u)$.
Since $u$ is locally nilpotent, we know that $x_i=0$ for some $i \geq 0$,
and hence we can find a minimal integer $i \geq 0$ such that $x_i \not\in U_{k+i,\infty} \oplus V_{k+i}$.
Replacing $k$ with $k+i$ and $x$ with $x_i$, we can simply assume that $u_k(x) \in U_{k+1,\infty} \oplus V_{k+1}$.
So, we write $u_k(x)=y+z$ where $y \in U_{k+1,\infty}$ and $z \in V_{k+1}$.
Since $U_{k+1,\infty}=u_k(U_{k,\infty})$, we have $y=u_k(y')$ for some $y' \in U_{k,\infty}$.
It follows that $x':=x-y'$ is outside of $U_{k,\infty} \oplus V_k$ and satisfies $u_k(x')=z \in V_{k+1}$.
Set then $V'_i:=V_i$ for all $i \in (\Z/n) \setminus \{k\}$, and $V'_k:=V_k \oplus \F x'$.
Then we see that $(V'_l)_{l \in \Z/n}$ is a subrepresentation of $u$ that is linearly disjoint from $(U_{k,\infty})_{k \in \Z/n}$, which contradicts the maximality of $(V_l)_{l \in \Z/n}$. Hence, $(V_l)_{l \in \Z/n}$ has the required properties.
\end{proof}

Next, choose $V=(V_k)_{k \in \Z/n}$ given by Proposition \ref{direcfactorProp},
and denote by $V_{k,\alpha}$ the subspaces associated with the linear representation $u_{|V}$.
By induction, one finds that
$$\forall \alpha, \; \forall k \in \Z/n, \; U_{k,\alpha}=U_{k,\infty} \oplus V_{k,\alpha}$$
and in particular $V_{k,\infty}=\{0\}$ for all $k \in \Z/n$. In other words, $u_{|V}$ is reduced.
It follows that
$$u \simeq u_\infty \oplus u_{|V} \quad \text{and} \quad u_{|V} \simeq u_{\red.}$$
In particular, $u_{\red}$ is reduced and
$$u \simeq u_\infty \oplus u_{\red}.$$

Hence, we can conclude:

\begin{theo}
Let $u$ and $v$ be locally nilpotent linear representations of $\calC_n$. Then the following conditions are equivalent:
\begin{enumerate}[(i)]
\item $u$ and $v$ are isomorphic;
\item $u_{\infty}$ is isomorphic to $v_\infty$, and $u_{\red}$ is isomorphic to $v_{\red}$.
\end{enumerate}
\end{theo}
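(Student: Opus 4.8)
The plan is to deduce this theorem directly from the material already in place: on the one hand the decomposition $u \simeq u_\infty \oplus u_{\red}$ (and its analogue for $v$) obtained just above from Proposition \ref{direcfactorProp}, and on the other hand the fact, established at the end of Section \ref{invariantsSection}, that an isomorphism of representations carries the canonical chain $(U_{k,\alpha})_\alpha$ to the canonical chain of the target. So nothing new needs to be invented; the proof is a short assembly of these facts.

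For the implication (i) $\Rightarrow$ (ii), I would start from an isomorphism $\varphi=(\varphi_k)_{k \in \Z/n}$ from $u$ to $v$. By the transfinite induction carried out at the end of Section \ref{invariantsSection}, each $\varphi_k$ induces an isomorphism from $U_{k,\alpha}$ onto $V_{k,\alpha}$ for every ordinal $\alpha$, hence $\varphi_k(U_{k,\infty})=V_{k,\infty}$ for all $k \in \Z/n$. Consequently the family $\bigl((\varphi_k)_{|U_{k,\infty}}\bigr)_{k \in \Z/n}$ is an isomorphism from the subrepresentation $U_\infty$ of $u$ onto the subrepresentation $V_\infty$ of $v$, i.e.\ an isomorphism $u_\infty \overset{\simeq}{\longrightarrow} v_\infty$; and $\varphi$ passes to the quotient representations, yielding an isomorphism $u_{\red}=u \modu U_\infty \overset{\simeq}{\longrightarrow} v \modu V_\infty=v_{\red}$. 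This is exactly the general observation about subrepresentations and isomorphisms recalled when subrepresentations were introduced in the present section.

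For the implication (ii) $\Rightarrow$ (i), I would invoke the decomposition established just after Proposition \ref{direcfactorProp}: $u \simeq u_\infty \oplus u_{\red}$ and $v \simeq v_\infty \oplus v_{\red}$. Since the external direct sum of linear representations of $\calC_n$ is unchanged up to isomorphism when each summand is replaced by an isomorphic representation, the hypotheses $u_\infty \simeq v_\infty$ and $u_{\red} \simeq v_{\red}$ give $u \simeq u_\infty \oplus u_{\red} \simeq v_\infty \oplus v_{\red} \simeq v$. There is no genuine obstacle here: every ingredient—Proposition \ref{direcfactorProp}, the reducedness of $u_{|V}$, the behaviour of the chain $(U_{k,\alpha})_\alpha$ under morphisms, and the stability of direct sums under replacement of summands by isomorphic ones—has already been proved. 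The only point needing (minimal) care is keeping the identifications straight, namely that $U_\infty$ and the complement $V$ furnished by Proposition \ref{direcfactorProp} are honest subrepresentations, so that the internal direct sum $u_\infty \oplus u_{|V}$ makes sense and coincides up to isomorphism with $u$, and that $u_{|V} \simeq u_{\red}$.
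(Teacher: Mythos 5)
Your proposal is correct and follows exactly the route the paper intends: the implication (i) $\Rightarrow$ (ii) is the isomorphism-invariance of the saturated and reduced parts already noted in Section \ref{SaturatedSection}, and (ii) $\Rightarrow$ (i) is the decomposition $u \simeq u_\infty \oplus u_{\red}$ from Proposition \ref{direcfactorProp} together with the stability of direct sums under replacing summands by isomorphic ones. Nothing is missing.
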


Hence, it remains to classify the saturated locally nilpotent representations, and the reduced ones.
The former classification is carried out in the next section (with no restriction on the dimension), the latter
is carried out in Section \ref{extensionSection} for representations with countable dimension.

\subsection{The classification of saturated cycles}\label{SaturatedClassificationSection}

\begin{theo}\label{saturatedJordanTheo}
Let $u$ be a saturated locally nilpotent linear representation of $\calC_n$.
Then $u$ is isomorphic to a direct sum of infinite Jordan cycle-cells.
\end{theo}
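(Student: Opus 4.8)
The plan is to build the Jordan cycle-cells one generator at a time via a transfinite (here: countable-step or Zorn) exhaustion argument, exactly as in the classical proof that a saturated locally nilpotent endomorphism is a direct sum of infinite Jordan blocks. First I would fix a well-ordering, or rather work with Zorn's lemma on the set of families $(J_i)_{i\in I}$ of subrepresentations of $u$ such that each $J_i$ is an infinite Jordan cycle-cell (of some base) and the internal sum $\sum_{i\in I} J_i$ is direct, i.e.\ for each $k\in\Z/n$ the subspaces $(J_{i})_k$ are in direct sum inside $U_k$. Such a family is clearly inductive (a nested union of such families is again such a family, since directness is a finite condition and the Jordan-cell property is preserved under union along a chain), so Zorn gives a maximal one; call $W=(W_k)_{k\in\Z/n}$ with $W_k:=\bigoplus_{i\in I}(J_i)_k$ the corresponding subrepresentation, which is isomorphic to a direct sum of infinite Jordan cycle-cells. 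It remains to show $W_k=U_k$ for all $k$.

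The core of the argument is the following extension step: if $W\neq U$, pick $k$ and $x\in U_k\setminus W_k$; since $u$ is locally nilpotent we may, by replacing $(k,x)$ with $(k+i,x_i)$ for the least $i$ with $x_i\notin W_{k+i}$ along the associated sequence, assume $u_k(x)\in W_{k+1}$. Now I exploit saturation, which says every $u_j$ is surjective: I want to lift $x$ to an element lying over a whole infinite forward-and-backward Jordan string disjoint from $W$. Concretely, using surjectivity of $u_{k-1},u_{k-2},\dots$ repeatedly, I construct a two-sided sequence $(e_i)_{i\in\Z}$ with $e_0$ a suitable modification of $x$, $u_{k-1+i \bmod n}(e_{i-1})=e_i$ for all $i\in\Z$ wait—indexing needs care: I mean a family $(e_i)_{i\in\N}$ as in the definition of an infinite Jordan cycle-cell, together with the requirement that the span of this family meets $W$ trivially in each degree. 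The point is that $u_k(x)\in W_{k+1}$ lets me correct $x$ modulo $\Ker u_k$ so that the forward orbit dies (local nilpotency) while the backward lifts, chosen by surjectivity, can be adjusted modulo $W$ to stay outside $W$; the new cell $\F$-spanned by these $e_i$'s is then a legitimate infinite Jordan cycle-cell linearly disjoint from $W$, contradicting maximality. Hence $W_k=U_k$ and $u\simeq\bigoplus$ of infinite Jordan cycle-cells.

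The main obstacle is precisely the construction in the extension step of a backward string of lifts that remains disjoint from $W$ in every degree while forming an honest Jordan cell: surjectivity of each $u_j$ gives \emph{some} preimage at each stage, but one must show the preimages can be chosen modulo $W$ so that no nontrivial linear combination of the $e_i$'s falls into $W_k$ for the relevant $k$ — equivalently, that the string, read modulo $W$, is ``free'' in the quotient representation $u\bmod W$. The quotient $u\bmod W$ is again saturated and locally nilpotent, and in it the image $\bar x\neq 0$ with $u$-forward orbit eventually zero; so the real content is the case $W=0$, i.e.\ that in a nonzero saturated locally nilpotent representation every nonzero vector extends to an infinite Jordan cell — this is a direct consequence of surjectivity (pick successive preimages) combined with local nilpotency (the forward orbit terminates), and the freeness is automatic because a nontrivial relation among the $e_i$ in a single degree $k$, pushed forward by $u$, would contradict either the minimality built into the choice of $x$ or injectivity issues that do not arise once one normalizes $u_k(x)=0$. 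I would handle this by first reducing to $W=0$ via passing to the quotient, then proving the clean statement ``a nonzero saturated locally nilpotent representation contains an infinite Jordan cycle-cell as a direct summand-compatible subrepresentation,'' and finally feeding this back into the Zorn argument; the additivity of the height chains and the fact that saturation and local nilpotency pass to quotients (noted already in the excerpt) make the bookkeeping routine.
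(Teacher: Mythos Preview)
Your approach via Zorn's lemma is genuinely different from the paper's and, modulo one point, correct. The paper proceeds constructively: it fixes for each $k$ a basis $(e_{k,i})_{i\in I_k}$ of $\Ker u_k$, uses surjectivity to lift each $e_{k,i}$ backward to a string $(e_{k,i,l})_{l\ge 0}$, and then proves directly (induction on $l$ for independence, induction on the local nilindex for spanning) that the resulting families are bases of the $U_k$ exhibiting the Jordan-cell decomposition. No maximality argument and no quotient appear. Your route trades this explicit basis construction for a cleaner one-cell-at-a-time picture, at the cost of a Zorn invocation; either is fine.

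The gap is in your extension step. Having found $x\in U_k\setminus W_k$ with $u_k(x)\in W_{k+1}$, you need to replace $x$ by some $x'=x-w$ with $w\in W_k$ and $u_k(x')=0$ before any backward string can be a Jordan cycle-cell; this requires $u_k(x)\in u_k(W_k)$, i.e.\ that $W$ itself be \emph{saturated}. That is true---each infinite Jordan cycle-cell is saturated, and saturation is preserved under direct sums---but you never state or use it, and neither the ``adjust modulo $W$'' phrasing nor the pass-to-the-quotient variant goes through without it: a Jordan cell in $u\bmod W$ has $\bar u_l(\bar e_0)=0$, but an arbitrary lift $e_0$ only satisfies $u_l(e_0)\in W_{l+1}$, and correcting this again needs $W_{l+1}=u_l(W_l)$. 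Once that is in hand, the disjointness of the new cell from $W$ is straightforward: a relation $\sum_m \lambda_m e_{mn}\in W_l$ pushed forward by $(\pi u)_l^{M}$ gives $\lambda_M e_0\in W_l$, forcing $\lambda_M=0$ and then downward induction. Your remark about ``injectivity issues'' is a red herring; no injectivity enters anywhere.
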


\begin{proof}
Let us write $u=\bigl(U_k,u_k\bigr)_{k \in \Z/n}$.
For all $k \in \Z/n$, let us choose a basis $(e_{k,i})_{i \in I_k}$ of $\Ker u_k$ in which the indexing sets $I_k$ are pairwise disjoint. As each $u_l$ map is surjective, we find by induction, for all $k \in \Z/n$ and all $i \in I_k$,
a sequence $(e_{k,i,l})_{l \geq 0}$ of vectors in which:
\begin{itemize}
\item $e_{k,i,0}=e_{k,i}$ ;
\item For all $l \in \N$, $e_{k,i,l} \in U_{k-l}$ ;
\item For all $l >0$, $u_{k-l}(e_{k,i,l})=e_{k,i,l-1.}$
\end{itemize}
Fix $k \in \Z/n$ and denote by $J_k$ the set of all triples $(j,i,l')$ in which $j \in \Z/n\Z$, $i \in I_j$, $l'$ is a non-negative integer and $j-l'=k$;
for $l \geq 0$ and $k \in \Z/n$, we denote by $J_{k,l}$ the set of all such triples $(j,i,l')$ in which $l' \leq l$.
We claim that the family $\bfB_k:=(e_{j,i,l})_{(j,i,l)\in J_k}$ is a basis of $V_k$. First, we note that every vector in $\bfB_k$ is the image of some vector of $\bfB_{k-1}$ under $u_{k-1}$.

Given $l \in \N$, denote by $\bfB_{k,l}$ the subfamily of $\bfB_k$ obtained by
considering only the triples $(j,i,l')$ in $J_{k,l}$.
We shall prove by induction that $\bfB_{k,l}$ is linearly independent for all $l \geq 0$.
This is already true for $l=0$ because $\bfB_{k,0}$ equals $(e_{k,i})_{i \in I_k}$ up to a bijection of indexing sets.
Now, let $l \geq 1$ and assume that $\bfB_{k,l-1}$ is linearly independent for all $k \in \Z/n$.
Let $k \in \Z/n$, and let $(\lambda_{j,i,l'})_{(j,i,l')\in J_{k,l}}$ be a family of scalars with finite support such that
$$\sum_{(j,i,l')\in J_{k, l}} \lambda_{j,i,l'}\, e_{j,i,l'}=0.$$
Applying $u_{k+l-1} \circ \cdots \circ u_{k}$ to this identity leads to
$$\sum_{i \in I_{k}} \lambda_{k+l,i,l}\, e_{k+l,i}=0,$$
and hence $\lambda_{k+l,i,l}=0$ for all $i \in I_{k}$. Hence,
$$\sum_{(j,i,l')\in J_{k,l-1}} \lambda_{j,i,l'}\, e_{j,i,l'}=0,$$
which, by induction, leads to $\lambda_{j,i,l'}=0$ for all $(j,i,l') \in J_{k,l-1}$.
Hence, $\bfB_{k,l}$ is linearly independent for all $k \in \Z/n$ and all $l \geq 0$, and we conclude
that $\bfB_k$ is linearly independent for all $k \in \Z/n$.

Next, we prove by induction on $l$ that, for all $k \in \Z/n$, every $x \in U_k$ with local nilindex
$l$ is a linear combination of vectors of $\bfB_k$.
This is obvious for $l=0$. Assume that it holds for some integer $l$.
Let $k \in \Z/n$ and let $x \in U_k$ have local nilindex $l+1$.
Then $u_k(x)$ has local nilindex $l$, and hence
it is a linear combination of vectors of $\bfB_{k+1}$. Each vector of $\bfB_{k+1}$ is the image under $u_k$
of some vector of $\bfB_k$, whence there is a vector $x' \in \Vect(\bfB_k)$ such that $u_k(x)=u_k(x')$.
It follows that $x-x' \in \Ker u_k$, and hence $x-x' \in \Vect(e_{k,i})_{i \in I_k} \subset \Vect(\bfB_k)$.
We conclude that $x=(x-x')+x' \in \Vect(\bfB_k)$.

Therefore, $\bfB_k$ is a basis of $U_k$ for all $k \in \Z/n$.

Now, set $I:=\underset{k \in \Z/n}{\bigcup} I_k$.
Given $k \in \Z/n$ and $i \in I$, set $k' \in \Z/n$ such that $i \in I_{k'}$,
and denote by $V_{k,i}$ the linear span of the vectors $e_{k',i,l'}$ where $l'$ is such that $(k',i,l') \in J_{k}$.
We deduce from the above considerations that
$$\forall k \in \Z/n, \; V_{k}=\underset{i \in I}{\bigoplus}\, V_{k,i}.$$
Moreover, for all $i \in I$, the family $V^{(i)}:=(V_{k,i})_{k \in \Z/n}$ is a subrepresentation of $u$. It follows that
$$u \simeq \underset{i \in I}{\oplus} u_{|V^{(i)}.}$$
Finally, for all $k \in \Z/n$ and all $i \in I_k$, it is clear that $u_{|V^{(i)}}$
is an infinite Jordan cycle-cell with base $k$.
\end{proof}

Next, we prove that the $\kappa_{k,\infty}$ invariants determine the saturated locally nilpotent linear representations of
$\calC_n$ up to isomorphism.
Start from an $l \in \Z/n$ and an infinite Jordan cycle-cell $u$ of base $l$.
Clearly, $\kappa_{k,\infty}(u)=0$ for all $k \in (\Z/n) \setminus \{l\}$, and
$\kappa_{l,\infty}(u)=1$.

Now, let $f$ be a saturated locally nilpotent linear representation of $\calC_n$.
By Theorem \ref{saturatedJordanTheo}, there is a family $(u^{(i)})_{i \in I}$ of Jordan cycle-cells such that
$$f \simeq \underset{i \in I}{\oplus} u^{(i)}.$$
For all $k \in \Z/n$, denote by $I_k$ the set of all $i \in I$ such that $u^{(i)}$
has base $k$. The family $(I_k)_{k \in I}$ is a partition of $I$, and
obviously the data of the cardinality of the sets $I_k$ defines $f$ up to isomorphism.

Noting that the $\kappa_{k,\infty}$ invariants are additive with respect to direct sums and remembering that
$\kappa_{k,\infty}(u^{(i)})=1$ if $i \in I_k$, and $\kappa_{k,\infty}(u^{(i)})=0$ otherwise,
we find that $\kappa_{k,\infty}(f)=\card(I_k)$ for all $k \in \Z/n$.

Hence, we conclude:

\begin{theo}
Let $u$ and $v$ be saturated locally nilpotent linear representations of $\calC_n$.
Then $u$ is isomorphic to $v$ if and only if $\kappa_{k,\infty}(u)=\kappa_{k,\infty}(v)$ for all $k \in \Z/n$.
\end{theo}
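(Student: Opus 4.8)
The plan is to read off the classification directly from Theorem \ref{saturatedJordanTheo} together with a computation of $\kappa_{k,\infty}$ on the basic building blocks, the infinite Jordan cycle-cells. Both implications are then essentially formal.

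The ``only if'' implication is already available: we observed at the end of Section \ref{invariantsSection} that an isomorphism of linear representations of $\calC_n$ preserves every Kaplansky invariant $\kappa_{k,\alpha}$, the case $\alpha=\infty$ included. So if $u$ and $v$ are isomorphic then $\kappa_{k,\infty}(u)=\kappa_{k,\infty}(v)$ for all $k \in \Z/n$, and nothing more needs to be said.

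For the converse, I would first record the value of $\kappa_{k,\infty}$ on a single infinite Jordan cycle-cell $w$ of base $l$. Such a cell is saturated, since all its structure maps are surjective by construction, so $W_{k,\infty}=W_k$ for every $k$ and therefore $\kappa_{k,\infty}(w)=\dim(\Ker w_k)$. Inspecting the explicit basis $(e_i)_{i \in \N}$ shows that $\Ker w_k=\{0\}$ for $k\neq l$ while $\Ker w_l$ is one-dimensional (spanned by $e_0$), so $\kappa_{k,\infty}(w)$ equals $1$ when $k=l$ and $0$ otherwise. Next, I would apply Theorem \ref{saturatedJordanTheo} to write $u\simeq\bigoplus_{i\in I}u^{(i)}$ and $v\simeq\bigoplus_{i\in J}v^{(i)}$ as direct sums of infinite Jordan cycle-cells, and for each $k\in\Z/n$ let $I_k$ (respectively $J_k$) be the set of indices whose cell has base $k$. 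Since the infinite Jordan cycle-cell of a given base is unique up to isomorphism (it is isomorphic to $R^k(j)$ by definition), the isomorphism class of $u$ is determined by the family of cardinals $(\card I_k)_{k\in\Z/n}$, and likewise that of $v$ by $(\card J_k)_{k\in\Z/n}$. Finally, additivity of the Kaplansky invariants under direct sums, combined with the computation above, yields $\kappa_{k,\infty}(u)=\card I_k$ and $\kappa_{k,\infty}(v)=\card J_k$ for all $k$. Hence the hypothesis $\kappa_{k,\infty}(u)=\kappa_{k,\infty}(v)$ for all $k$ forces $\card I_k=\card J_k$ for all $k$, and therefore $u\simeq v$.

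There is no genuine obstacle here: the entire substance of the argument is carried by Theorem \ref{saturatedJordanTheo}. The only point that deserves a moment's attention is the computation of $\kappa_{k,\infty}$ for a single Jordan cycle-cell, and even that is immediate once one notices that such a cell is saturated, so that $\kappa_{k,\infty}$ collapses to $\dim(\Ker w_k)$.
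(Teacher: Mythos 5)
Your proposal is correct and follows essentially the same route as the paper: decompose both representations into infinite Jordan cycle-cells via Theorem \ref{saturatedJordanTheo}, compute $\kappa_{k,\infty}$ on a single cell of base $l$ (equal to $\delta_{k,l}$), and conclude by additivity of the invariants under direct sums. The only difference is cosmetic: you justify the computation on a single cell via saturation and the explicit kernel, where the paper simply asserts it.
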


Note that this result holds without any assumption on the respective dimensions of $u$ and $v$.

\vskip 3mm
Finally, the very definition of an infinite Jordan cycle-cell shows that it always has an adapted basis.
Combining this observation with Theorem \ref{saturatedJordanTheo}, we obtain:

\begin{theo}\label{saturatedadaptedbasisTheo}
Every saturated locally nilpotent linear representation of $\calC_n$ has an adapted basis.
\end{theo}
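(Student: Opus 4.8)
The plan is to combine Theorem~\ref{saturatedJordanTheo} with two elementary stability properties of the notion of an adapted basis: invariance under isomorphism, and additivity under arbitrary direct sums. Granting these, the argument is immediate: if $u$ is saturated and locally nilpotent, then by Theorem~\ref{saturatedJordanTheo} it is isomorphic to a direct sum $\bigoplus_{i\in I} u^{(i)}$ of infinite Jordan cycle-cells; each $u^{(i)}$ has an adapted basis (see below); hence so does the direct sum, and hence so does $u$.

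The first thing I would check is that every infinite Jordan cycle-cell $u^{(i)}$, say of base $l\in\Z/n$ and carried by spaces $\bigl(U^{(i)}_k\bigr)_{k\in\Z/n}$, has an adapted basis. Unwinding the definition, there is a family $(e_j)_{j\in\N}$ such that, for every $k\in\Z/n$, the family $\bigl(e_j\bigr)_{j\in\N,\ k+j=l}$ is a basis of $U^{(i)}_k$, while $u^{(i)}_k(e_j)=e_{j-1}$ for every such $j$, with the convention $e_{-1}=0$. Since these families are bases, the $e_j$ occurring in them are pairwise distinct and nonzero, so setting $\bfB^{(i)}_k:=\{\,e_j : j\in\N,\ k+j=l\,\}$ gives a basis (as a subset) of $U^{(i)}_k$ for each $k$, and by construction $u^{(i)}_k$ sends every element of $\bfB^{(i)}_k$ either to an element of $\bfB^{(i)}_{k+1}$ (namely $e_{j-1}$ when $j\geq 1$) or to the zero vector (when $j=0$, i.e.\ when $k=l$). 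Thus $\bigl(\bfB^{(i)}_k\bigr)_{k\in\Z/n}$ is an adapted basis for $u^{(i)}$.

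Next I would verify the two stability properties. For invariance under isomorphism: if $\varphi=(\varphi_k)_{k\in\Z/n}$ is an isomorphism from a representation $u'$ to a representation $u$ and $(\bfB'_k)_{k\in\Z/n}$ is an adapted basis of $u'$, then each $\varphi_k(\bfB'_k)$ is a basis of the target space, and the relation $u_k\circ\varphi_k=\varphi_{k+1}\circ u'_k$ shows that $u_k$ maps each vector of $\varphi_k(\bfB'_k)$ to a vector of $\varphi_{k+1}(\bfB'_{k+1})$ or to zero, so $\bigl(\varphi_k(\bfB'_k)\bigr)_{k\in\Z/n}$ is an adapted basis of $u$. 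For additivity: let $f=\bigl(U_k,u_k\bigr)_{k\in\Z/n}$ be the external direct sum of a family $(u^{(i)})_{i\in I}$ of linear representations, each equipped with an adapted basis $(\bfB^{(i)}_k)_{k\in\Z/n}$. Inside $U_k=\bigoplus_{i\in I}U^{(i)}_k$, identify each $\bfB^{(i)}_k$ with its image under the canonical injection $U^{(i)}_k\hookrightarrow U_k$; these images consist of nonzero vectors and are pairwise disjoint, since vectors coming from distinct summands have distinct supports. Hence $\bfB_k:=\bigcup_{i\in I}\bfB^{(i)}_k$ is a basis of $U_k$, and since $u_k$ acts summandwise it sends every vector of $\bfB_k$ either to a vector of $\bfB_{k+1}$ or to zero; so $(\bfB_k)_{k\in\Z/n}$ is an adapted basis of $f$.

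I do not expect any real obstacle here: the substantive work has already been done in Theorem~\ref{saturatedJordanTheo}, and the only point requiring a little care will be the bookkeeping forced by the convention that a basis is a subset rather than an indexed family — this is why one must note explicitly that the basis vectors coming from distinct Jordan cells remain distinct and nonzero once placed inside the direct sum. Everything else is a mechanical transfinite-free check.
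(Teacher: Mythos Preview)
Your proposal is correct and follows exactly the paper's approach: the paper simply remarks that the very definition of an infinite Jordan cycle-cell shows it has an adapted basis, and then invokes Theorem~\ref{saturatedJordanTheo}. You have merely made explicit the (routine) stability of adapted bases under isomorphism and direct sum that the paper leaves implicit.
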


\section{The classification of reduced locally nilpotent cycles}\label{extensionSection}

The aim of the present section is to prove the Cyclic Kaplansky Theorem. This will be achieved thanks to a generalization of the celebrated extension theorem of Mackey and Kaplansky (see \cite{Fuchs,Kaplansky}).

\subsection{An extension theorem}\label{extensiontheoSubsection}

Let $u$ and $v$ be reduced linear representations of $\calC_n$, and $E \subset F$ be subrepresentations of $u$.
Let $\varphi : u_{|E} \rightarrow v$ and $\psi : u_{|F} \rightarrow v$ be morphisms.
We say that $\psi$ \textbf{extends} $\varphi$ whenever $(\psi_k)_{|E_k}=\varphi_k$ for all $k \in \Z/n$.
We say that $\varphi$ is \textbf{height-preserving} if
$$\forall k \in \Z/n, \; \forall x \in E_k, \; h_{k,v}\bigl(\varphi_k(x)\bigr)=h_{k,u}(x)$$
(beware here that $h_{k,u}(x)$ is meant to be the height of $x$ with respect to $u$, not with respect to $u_{|E}$!).

\begin{theo}[Extension Theorem]\label{extensionTHM}
Let $u$ and $v$ be reduced locally nilpotent linear representations of $\calC_n$
with countable dimension and the same cyclic Kaplansky invariants. Let $E$ be a finite-dimensional subrepresentation of $u$, and
$\varphi : u_{|E} \rightarrow v$ be a height-preserving morphism, i.e.\
$$\forall k \in \Z/n, \; \forall x \in E_k, \; h_{k,v}\bigl(\varphi_k(x)\bigr)=h_{k,u}(x).$$
Then $\varphi$ extends to an isomorphism from $u$ to $v$.
\end{theo}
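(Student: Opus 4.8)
The plan is to prove the Extension Theorem by a transfinite back-and-forth argument, modeled on the classical Mackey--Kaplansky extension theorem for modules over a complete DVR but adapted to the cyclic setting. The key technical ingredient, which I would isolate first as a lemma, is a \emph{one-step extension} result: given a height-preserving morphism $\varphi : u_{|E} \to v$ with $E$ finite-dimensional, and given a single vector $x \in U_k$ for some $k$, one can enlarge $E$ to a finite-dimensional subrepresentation $E'$ containing $x$ and extend $\varphi$ to a height-preserving morphism $\varphi' : u_{|E'} \to v$. Symmetrically (using that $u$ and $v$ have the same cyclic Kaplansky invariants, which is a symmetric hypothesis), one can hit any prescribed vector $y \in V_k$ in the image. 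Iterating these two moves along a countable enumeration of bases of the $U_k$ and the $V_k$ produces, in the limit, a height-preserving isomorphism extending $\varphi$; the fact that a bijective height-preserving morphism is an isomorphism of representations is immediate, and injectivity of the limiting map follows because a height-preserving morphism out of a \emph{reduced} representation is automatically injective (if $\varphi_k(x)=0$ then $h_{k,v}(\varphi_k(x))=\infty$, forcing $h_{k,u}(x)=\infty$, i.e. $x \in U_{k,\infty}=\{0\}$).

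The heart of the matter is therefore the one-step extension lemma, and within it the truly delicate case. To add $x \in U_k$ to the domain, one first reduces: by applying $u_k, u_{k+1}, \dots$ repeatedly one eventually lands inside $E$ (since $u$ is locally nilpotent, some iterate of $x$ is $0 \in E_{\cdot}$), so by downward induction on the local nilindex one may assume $u_k(x) \in E_{k+1}$ already. Then the problem is: choose $\xi \in V_k$ with $v_k(\xi) = \varphi_{k+1}(u_k(x))$ and with $h_{k,v}(\xi) = h_{k,u}(x)$, and such that setting $\varphi'_k(x):=\xi$ still yields a well-defined morphism on $E_k \oplus \F x$ (when $x \notin E_k$) — one must also handle the subcase $x \in E_k$, where nothing is added but one may need to \emph{adjust} $\varphi_k(x)$, which does not occur since $\varphi$ is already defined there; the real work is when $x \notin E_k$. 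The constraint $v_k(\xi) = \varphi_{k+1}(u_k(x)) =: \eta$ determines $\xi$ up to an element of $\Ker v_k$, and one needs the coset $\xi_0 + \Ker v_k$ (for any fixed solution $\xi_0$, which exists because $\varphi$ is height-preserving: $u_k(x)$ has some height $\beta$, hence $\eta \in V_{k+1,\beta}$, hence $\eta = v_k(\xi_0)$ for some $\xi_0$ of height $\geq \beta-1$, and in fact we can arrange height exactly as needed) to contain a vector of the prescribed height $\alpha := h_{k,u}(x)$. This is exactly where the equality of \emph{all} the Kaplansky invariants $\kappa_{k,\gamma}(u)=\kappa_{k,\gamma}(v)$ and $\kappa_{k,\infty}(u)=\kappa_{k,\infty}(v)$ enters, via Lemma \ref{lemma:reinterpretinvariant}: these invariants measure precisely the dimensions of the graded pieces of $\Ker u_k$ (resp. $\Ker v_k$) by height, so they control which heights are realized by elements of $\Ker v_k$ lying over a given vector, matching what is available on the $u$ side. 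A counting/dimension argument on the finite-dimensional data of $E$ shows that the finitely many constraints already imposed by $\varphi$ cannot exhaust the supply of suitable kernel elements on the $v$ side, precisely because the total supply (an invariant) is the same as on the $u$ side where $x$ itself witnesses its availability.

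I would organize the write-up as: (1) a \textbf{Reduction Lemma} handling the passage from arbitrary $x$ to the case $u_k(x) \in E_{k+1}$; (2) a \textbf{Height Lemma} stating that for any finite-dimensional subrepresentation $E$ of a reduced countable-dimensional locally nilpotent $v$, and any ordinal $\alpha$, any $\eta \in V_{k+1}$ of height $\geq \alpha - 1$ (with appropriate successor/limit bookkeeping) that is compatible with $\varphi$ can be lifted through $v_k$ to a vector of height exactly $\alpha$ avoiding a prescribed finite-codimensional obstruction — this is where the invariants are used; (3) the \textbf{One-Step Extension Lemma} combining (1) and (2); and (4) the \textbf{back-and-forth argument} assembling the isomorphism. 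The main obstacle, as indicated, is step (2): getting the \emph{exact} height right (not merely a lower bound) uniformly, while respecting the finitely many linear relations already committed to by $\varphi$ on $E_k$, and seeing that the hypothesis ``$u$ and $v$ have the same cyclic Kaplansky invariants'' is both necessary and sufficient for this. Everything else — the Zorn-free countable induction, the verification that the limit is a morphism, injectivity from reducedness — is routine bookkeeping in the cyclic indexing $\Z/n$ and I would not belabor it.
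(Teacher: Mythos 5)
Your plan is essentially the paper's own strategy: the same zigzag (back-and-forth) construction over countable enumerations, the same reduction by induction on the local nilindex to the case $u_k(x)\in E_{k+1}$, the same use of reducedness for injectivity, and the same key mechanism whereby the equality $\kappa_{k,\alpha}(u)=\kappa_{k,\alpha}(v)$, read through Lemma \ref{lemma:reinterpretinvariant}, guarantees that the finite-dimensional subspace of $\Ker v_k$ already committed to by $\varphi$ cannot exhaust the supply of kernel vectors of height exactly $\alpha$. The only caution is that your ``Height Lemma'' compresses what is really the bulk of the work: one must first normalize $x$ to a representative of $x+E_k$ that is $k$-adapted to $E_k$ (and whose image is adapted to $u_k(E_{k,h(x)})$) so that the extension preserves heights on all of $E_k+\F x$ and not merely at $x$; and when $h_{k,u}(x)<h_{k+1,u}(u_k(x))-1$, or when $h_{k+1,u}(u_k(x))$ is a limit ordinal, one cannot lift $\eta$ directly to the right height but must split $x$ as a maximal-height lift $x'$ plus the kernel vector $x-x'$ and treat the two pieces separately --- exactly the two-case analysis the paper carries out, so the plan is sound but that step is where all the remaining effort lies.
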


The Cyclic Kaplansky Theorem is readily deduced by applying the Extension Theorem to the trivial morphism from the
zero subrepresentation of $u$ to $v$ (it preserves heights since the height of the zero vector is always $\infty$).

In order to simplify the proof of the extension theorem, it is convenient to restate it in terms of \emph{functional subrepresentations}
of $u \times v$. Here, $u=(U_k,u_k)_{k \in \Z/n}$ and $v=(V_k,v_k)_{k \in \Z/n}$ are reduced locally nilpotent linear representations of $\calC_n$.
We define the direct product representation
$$u \times v :=\bigl(U_k \times V_k ,w_k\bigr)_{k \in \Z/n}$$
where $w_k(x,y)=(u_k(x),v_k(y))$ for all $k \in \Z/n$ and all $(x,y)\in U_k \times V_k$ (note that $u \times v$ is isomorphic to $u \oplus v$,
but it is better to think in terms of cartesian products here).

Let $\Gamma=(\Gamma_k)_{k \in \Z/n}$ be a subrepresentation of $u \times v$.
For each (ordered) pair $(x,y)$ of sets, we set
$$\pi_1(x,y):=x, \quad \pi_2(x,y):=y \quad \text{and} \quad (x,y)^t:=(y,x)$$
and we extend $\pi_1$ and $\pi_2$ as functions on sets of pairs.
The first and second projections of $\Gamma$ are defined as
$$\pi_1(\Gamma):=\bigl(\pi_1(\Gamma_k)\bigr)_{k \in \Z/n} \quad \text{and} \quad \pi_2(\Gamma):=\bigl(\pi_2(\Gamma_k)\bigr)_{k \in \Z/n.}$$
They are subrepresentations of $u$ and $v$, respectively. Both are finite-dimensional if $\Gamma$ is finite-dimensional. The transpose of $\Gamma$ is defined as $\Gamma^t:=(\Gamma_k^t)_{k \in \Z/n}$, and one sees that it is a subrepresentation of
$v \times u$.

A subrepresentation $\Gamma$ of $u \times v$ is called \textbf{functional} whenever, for all $k \in \Z/n$ and all
$x \in U_k$, there is at most one $y \in V_k$ such that $(x,y)\in \Gamma_k$.
In that case, there is a uniquely-defined morphism $\varphi : u_{|\pi_1(\Gamma)} \rightarrow v$ such that
$$\forall k \in \Z/n, \; \Gamma_k=\{(x,\varphi_k(x)) \mid x \in \pi_1(\Gamma_k)\}.$$
Moreover, if both $\Gamma$ and $\Gamma^t$ are functional then the morphism $\varphi$ is actually an isomorphism from
$u_{|\pi_1(\Gamma)}$ to $v_{|\pi_2(\Gamma)}$.

Next, we say that a subrepresentation $\Gamma$ of $u \times v$ is \textbf{coherent} (with respect to $(u,v)$) when
$$\forall k \in \Z/n, \; \forall (x,y)\in \Gamma_k, \; h_{k,u}(x)=h_{k,v}(y).$$
In that case, we note that $\Gamma$ is functional: indeed, for all $k \in \Z/n$ and all $x \in U_k$,
given $y,y'$ in $V_k$ such that $(x,y)\in \Gamma_k$ and $(x,y')\in \Gamma_k$, we find that $(0_{U_k},y-y') \in \Gamma_k$ because $\Gamma_k$
is a linear subspace of $U_k \times V_k$; then $h_{k,v}(y-y')=h_{k,u}(0_{U_k})=\infty$ and hence $y-y'=0_{V_k}$
because $v$ is reduced. Likewise, as $\Gamma^t$ is obviously coherent with respect to $(v,u)$, we gather that it is also
functional. We conclude that if $\Gamma$ is coherent and $\pi_1(\Gamma)=(U_k)_{k \in \Z/n}$ and $\pi_2(\Gamma)=(V_k)_{k \in \Z/n}$, then
there is an isomorphism from $u$ to $v$.

Finally, let $E=(E_k)_{k \in \Z/n}$ be a subrepresentation of $u$ together with a height-preserving morphism $\varphi : u_{|E} \rightarrow v$.
Defining then $\Gamma_k$ as the graph of $\varphi_k$ for all $k \in \Z/n$, one checks that
$(\Gamma_k)_{k \in \Z/n}$ is a coherent subrepresentation of $u\times v$ with first projection $E$.

Hence, Theorem \ref{extensionTHM} can be conveniently reformulated as follows:

\begin{theo}[Graphic Extension Theorem]
Let $u=(U_k,u_k)_{k \in \Z/n}$ and $g=(V_k,v_k)_{k \in \Z/n}$ be reduced locally nilpotent linear representations of $\calC_n$
with countable dimension and the same cyclic Kaplansky invariants. Let $\Gamma$ be a
finite-dimensional coherent subrepresentation of $u \times v$. Then $\Gamma$ extends to a
coherent subrepresentation of $u \times v$ whose projections are $(U_k)_{k \in \Z/n}$ and $(V_k)_{k \in \Z/n}$.
\end{theo}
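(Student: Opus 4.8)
The plan is to adapt to the cyclic setting the classical back-and-forth argument of Mackey and Kaplansky; as usual, everything rests on a one-point extension lemma.

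The statement to bootstrap is $(\star)$: for every finite-dimensional coherent subrepresentation $\Gamma$ of $u\times v$, every $k\in\Z/n$ and every $x\in U_k$, there is a finite-dimensional coherent subrepresentation $\Gamma'\supseteq\Gamma$ of $u\times v$ with $x\in\pi_1(\Gamma'_k)$. Granting $(\star)$ — together with the statement obtained by exchanging the roles of $u$ and $v$, which is legitimate since transposition turns coherent subrepresentations of $u\times v$ into coherent subrepresentations of $v\times u$ and the two representations play symmetric roles in the hypotheses — the theorem follows by a routine exhaustion. Indeed, since $\dim u$ and $\dim v$ are countable, one enumerates a countable list consisting of bases of all the $U_k$ and of all the $V_k$; starting from $\Gamma$, one builds an increasing chain $\Gamma=\Gamma^{(0)}\subseteq\Gamma^{(1)}\subseteq\cdots$ of finite-dimensional coherent subrepresentations by absorbing, at step $m$, the $m$-th vector of the list into the first or second projection, as appropriate; and one checks that the pointwise union $\widehat\Gamma:=\bigl(\bigcup_{m}\Gamma^{(m)}_k\bigr)_{k\in\Z/n}$ is a coherent subrepresentation of $u\times v$ with projections $(U_k)_{k\in\Z/n}$ and $(V_k)_{k\in\Z/n}$; as recalled just before the statement, $\widehat\Gamma$ is then the graph of an isomorphism $u\to v$, and it plainly extends $\Gamma$.

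To prove $(\star)$ I would induct on the local nilindex $\nu_{u,k}(x)$, the case $x=0$ being trivial. For the inductive step (so $\nu_{u,k}(x)\geq 1$, whence $x\neq 0$), put $\Gamma'':=\Gamma$ if $u_k(x)=0$, and otherwise let $\Gamma''\supseteq\Gamma$ be furnished by the induction hypothesis applied to $u_k(x)$, which lies in $U_{k+1}$ and has strictly smaller nilindex; either way $\Gamma''$ is finite-dimensional and coherent and $u_k(x)\in\pi_1(\Gamma''_{k+1})$, so there is a unique $z\in V_{k+1}$ with $(u_k(x),z)\in\Gamma''_{k+1}$. Everything then reduces to a \emph{core lemma}: there exists $y\in V_k$ with $v_k(y)=z$ such that enlarging $\Gamma''$ at index $k$ only, by the line $\F\,(x,y)$, yields a coherent subrepresentation (it is automatically a subrepresentation since $w_k(x,y)=(u_k(x),z)\in\Gamma''_{k+1}$). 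To build $y$: first subtract from $x$ an element of $\pi_1(\Gamma''_k)$ so that $\alpha:=h_{k,u}(x)$ becomes maximal over the coset $x+\pi_1(\Gamma''_k)$ — a maximum exists because a finite-dimensional subspace of $U_k$ meets only finitely many of the $U_{k,\beta}$, and $\alpha\neq\infty$ since $u$ is reduced — and update $z$ accordingly (harmlessly, as this only alters the line to be added by an element of $\Gamma''_k$). For such a \emph{proper} $x$ one has $h_{k,u}(x+w)=\min(\alpha,h_{k,u}(w))$ for all $w\in\pi_1(\Gamma''_k)$, so, by coherence of $\Gamma''$, it suffices to exhibit $y$ with $v_k(y)=z$, $h_{k,v}(y)=\alpha$, and proper with respect to $\pi_2(\Gamma''_k)$. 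If $u_k(x)=0$ (then $z=0$), the proper element $x\in\Ker u_k$ of height $\alpha$ shows, via Lemma~\ref{lemma:reinterpretinvariant}, that $\kappa_{k,\alpha}(u)>0$, hence $\kappa_{k,\alpha}(v)=\kappa_{k,\alpha}(u)>0$, and one selects $y$ inside $\Ker v_k$ by a dimension count in the $\F[t]/(t)$-vector space $(\Ker v_k\cap V_{k,\alpha})/(\Ker v_k\cap V_{k,\alpha+1})$, finiteness of $\Gamma''$ leaving enough room for a proper choice. If $u_k(x)\neq 0$, then $\beta:=h_{k+1,u}(u_k(x))\geq\alpha+1$ and $h_{k+1,v}(z)=\beta$ by coherence; when $\beta=\alpha+1$ one has $z\in V_{k+1,\alpha+1}=v_k(V_{k,\alpha})$, so a preimage $y_0\in V_{k,\alpha}$ of $z$ has $h_{k,v}(y_0)=\alpha$, and when $\beta\geq\alpha+2$ one has a preimage $y_0\in V_{k,\alpha+1}$ of height $>\alpha$ while, writing $u_k(x)=u_k(x')$ with $x'\in U_{k,\alpha+1}$, the element $x-x'\in\Ker u_k$ again has height $\alpha$, so $\kappa_{k,\alpha}(v)=\kappa_{k,\alpha}(u)>0$ and $y_0$ can be corrected by a kernel element down to height $\alpha$; in both cases a further correction by an element of $\Ker v_k\cap V_{k,\alpha}$ secures properness with respect to $\pi_2(\Gamma''_k)$, using once more that $\Gamma''$ is finite while the relevant $\kappa$-invariants of $u$ and $v$ agree.

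The step I expect to be the main obstacle is this construction of $y$ in the core lemma — simultaneously arranging $v_k(y)=z$, $h_{k,v}(y)=\alpha$, properness of $y$ with respect to the finite-dimensional (and not necessarily kernel-contained) subspace $\pi_2(\Gamma''_k)$, and coherence of the resulting one-dimensional enlargement. This is exactly the delicate dimension count at the heart of the Mackey--Kaplansky extension theorem, here using that $\Gamma''$ is finite-dimensional together with the equality $\kappa_{k,\alpha}(u)=\kappa_{k,\alpha}(v)$ (valid whether these cardinals are finite or infinite), now carried out vertex by vertex around $\calC_n$; the cyclic structure itself intervenes only through the bookkeeping of indices modulo $n$ and through local nilpotency, which makes forward orbits terminate and thereby powers the induction on the nilindex.
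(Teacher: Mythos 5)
Your overall architecture coincides with the paper's: the zigzag exhaustion reducing the theorem to a one-vector catching statement, the induction on the local nilindex reducing that statement to the case where $u_k(x)$ is already caught, and a one-point extension lemma requiring both $x$ and $y$ to be adapted (``proper'') with equal heights. Your one-point step even takes a mildly different route from the paper's: instead of first normalizing $u_k(x)$ to be adapted to $u_k(A_{k,h_{k,u}(x)})$ and then splitting into cases according to whether $h_{k+1,u}(u_k(x))$ is a limit ordinal, you lift $z$ to a preimage of height at least $\alpha$ and correct downwards with a kernel element of height $\alpha$; that part is viable.

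The genuine gap is the final correction, the one that is supposed to make $y$ proper with respect to $B_k:=\pi_2(\Gamma''_k)$. You justify it by ``finiteness of $\Gamma''$ leaving enough room,'' i.e.\ a bare dimension count in $(\Ker v_k\cap V_{k,\alpha})/(\Ker v_k\cap V_{k,\alpha+1})$. This count fails when $\kappa_{k,\alpha}(v)$ is finite: properness of $y$ (given $h_{k,v}(y)=\alpha$) means exactly $y\notin B_{k,\alpha}+V_{k,\alpha+1}$, and it can perfectly happen that the image of $\Ker v_k\cap V_{k,\alpha}$ in $V_{k,\alpha}/V_{k,\alpha+1}$ --- a space of dimension $\kappa_{k,\alpha}(v)$ --- is entirely contained in the image of $B_{k,\alpha}$; in that case no kernel correction changes the class of $y_0$ modulo $B_{k,\alpha}+V_{k,\alpha+1}$, and the whole coset $y_0+(\Ker v_k\cap V_{k,\alpha})$ could a priori be trapped inside $B_{k,\alpha}+V_{k,\alpha+1}$. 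What saves the argument --- and what is missing from your sketch --- is a transfer back to the $u$ side through the height-preserving isomorphism carried by $\Gamma''$: the trapped situation would force $\Ker u_{k,\alpha}$ to coincide with the image of $A_{k,\alpha}\cap u_k^{-1}(A_{k+1,\alpha+2})$, and would force $x$ itself into $A_{k,\alpha}+U_{k,\alpha+1}$, contradicting its adaptedness. In the kernel case this is exactly the content of Lemma \ref{caracadaptedkernel} together with the strict inequality $\dim\bigl(\bigl[A_{k,\alpha}\cap u_k^{-1}(A_{k+1,\alpha+2})\bigr]/A_{k,\alpha+1}\bigr)<\kappa_{k,\alpha}(u)$, as exploited in Lemma \ref{secondsuperelemextensionlemma}; in the non-kernel case one must in addition rule out $y_0\in B_{k,\alpha}+V_{k,\alpha+1}$ by pulling $z$ back to $u_k(A_{k,\alpha})+A_{k+1,\alpha+2}$ and again invoking the adaptedness of $x$. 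Without this transfer, the crux of the theorem --- the only place where the equality of the cyclic Kaplansky invariants is genuinely used --- is asserted rather than proved.
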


This theorem will be proved in three steps.

\begin{Def}
Let $u=(U_k,u_k)_{k \in \Z/n}$ be a reduced locally nilpotent linear represention of $\calC_n$.
Let $E$ be a subrepresentation of $u$, let $k \in \Z/n$, and let $x \in U_k$.
We say that $E$ \textbf{$k$-catches} $x$ when $x \in E_k$.
\end{Def}

\begin{theo}[Simple Extension Theorem]
Let $u=(U_k,u_k)_{k \in \Z/n}$ and $v$ be reduced locally nilpotent linear representations of $\calC_n$
that have the same cyclic Kaplansky invariants. Let $\Gamma$ be a
finite-dimensional coherent subrepresentation of $u \times v$, and let $k \in \Z/n$ and $x \in U_k$ be such that
$u_k(x) \in \pi_1(\Gamma_{k+1})$. Then there exists a finite-dimensional coherent subrepresentation $\Gamma' \geq \Gamma$ of $u \times v$
whose first projection $k$-catches $x$.
\end{theo}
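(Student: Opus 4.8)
The plan is first to dispose of the trivial case $x\in\pi_1(\Gamma_k)$ (where one simply takes $\Gamma'=\Gamma$), and then to treat $x\notin E_k$, writing throughout $E_j:=\pi_1(\Gamma_j)$, $F_j:=\pi_2(\Gamma_j)$ and letting $\varphi=(\varphi_j)_{j\in\Z/n}$ be the morphism with $\Gamma_j=\{(z,\varphi_j(z))\mid z\in E_j\}$, which — $\Gamma$ being coherent — is height-preserving and is an isomorphism from $u_{|E}$ onto $v_{|F}$. The first step is to replace $x$ by a vector of the coset $x+E_k$ of maximal $k$-th height. I would show such a vector exists: since $E_k$ is finite-dimensional, the descending chain $(E_k\cap U_{k,\beta})_\beta$ is eventually constant, so from some ordinal on the (possibly empty) affine sets $(x+E_k)\cap U_{k,\beta}$ are nested cosets of one fixed subspace, hence all equal; if they were all nonempty, their common value would lie in $\bigcap_\beta U_{k,\beta}=U_{k,\infty}=\{0\}$, forcing $x\in E_k$, a contradiction. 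Thus there are an ordinal $\alpha$ and an $x_0\in x+E_k$ with $h_{k,u}(x_0)=\alpha$ and $h_{k,u}(x_0+a)\leq\alpha$ for all $a\in E_k$; $\alpha$ is a genuine ordinal, since $x_0\neq 0$ and $u$ is reduced. As $u_k(a)\in E_{k+1}$ for $a\in E_k$ we still have $u_k(x_0)\in\pi_1(\Gamma_{k+1})$, and a subrepresentation $k$-catching $x_0$ also $k$-catches $x$, so I may assume $x=x_0$. Maximality of $\alpha$ then yields two facts used repeatedly: (a) $h_{k,u}(a+\lambda x)=\min\bigl(\alpha,h_{k,u}(a)\bigr)$ for every $a\in E_k$ and every $\lambda\in\F\setminus\{0\}$ (examine the cases $h_{k,u}(a)<\alpha$, $=\alpha$, $>\alpha$); and (b) writing $\overline{(\cdot)}$ for the image in the relevant quotient $U_{k,\alpha}/U_{k,\alpha+1}$ or $V_{k,\alpha}/V_{k,\alpha+1}$, the class $\overline{x}$ does not belong to $\overline{E_k\cap U_{k,\alpha}}$, for otherwise subtracting a suitable element of $E_k$ would raise the height of $x$.

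Next I would recast what is to be constructed. Let $y'$ be the unique vector with $(u_k(x),y')\in\Gamma_{k+1}$, i.e. $y'=\varphi_{k+1}(u_k(x))$; since $\varphi$ preserves heights, $h_{k+1,v}(y')=h_{k+1,u}(u_k(x))\geq\alpha+1$, so $y'\in V_{k+1,\alpha+1}=v_k(V_{k,\alpha})$. I claim it suffices to find $y\in V_{k,\alpha}$ with $v_k(y)=y'$ whose class $\overline{y}$ in $V_{k,\alpha}/V_{k,\alpha+1}$ lies outside $\overline{F_k\cap V_{k,\alpha}}$. Indeed, given such a $y$, set $\Gamma'_j:=\Gamma_j$ for $j\neq k$ and $\Gamma'_k:=\Gamma_k+\F(x,y)$: this is a finite-dimensional subrepresentation of $u\times v$ with $\Gamma'\geq\Gamma$ — the only inclusion to verify, $w_k(\Gamma'_k)\subset\Gamma'_{k+1}$, holds because $(u_k(x),v_k(y))=(u_k(x),y')\in\Gamma_{k+1}$ — its first projection $k$-catches $x$, and it is coherent: a generic vector of $\Gamma'_k$ is $(a+\lambda x,\varphi_k(a)+\lambda y)$ with $a\in E_k$, and for $\lambda\neq 0$, fact (a) and the height-preservation of $\varphi$ give $h_{k,u}(a+\lambda x)=\min\bigl(\alpha,h_{k,u}(a)\bigr)=\min\bigl(\alpha,h_{k,v}(\varphi_k(a))\bigr)=h_{k,v}(\varphi_k(a)+\lambda y)$, the last equality because $\overline{y}\neq 0$ (whence $h_{k,v}(y)=\alpha$) and $\overline{y}\notin\overline{F_k\cap V_{k,\alpha}}$, which contains $\overline{\varphi_k(a)}$ whenever $\varphi_k(a)\in V_{k,\alpha}$.

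The heart of the argument is producing $y$, and this is where the equality of the cyclic Kaplansky invariants enters. The set of classes $\overline{y}$ obtained from vectors $y\in V_{k,\alpha}$ with $v_k(y)=y'$ is exactly the fibre $v_{k,\alpha}^{-1}(\overline{y'})\subset V_{k,\alpha}/V_{k,\alpha+1}$, a coset of $\Ker v_{k,\alpha}$. Suppose, for contradiction, that this coset lies inside $\overline{F_k\cap V_{k,\alpha}}$; then $\Ker v_{k,\alpha}\subset\overline{F_k\cap V_{k,\alpha}}$, so $\kappa_{k,\alpha}(v)=\dim\Ker v_{k,\alpha}$ is finite, and by hypothesis $\dim\Ker u_{k,\alpha}=\kappa_{k,\alpha}(u)=\kappa_{k,\alpha}(v)$ is finite too. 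Being height-preserving, $\varphi_k$ maps $E_k\cap U_{k,\alpha}$ isomorphically onto $F_k\cap V_{k,\alpha}$ and $E_k\cap U_{k,\alpha+1}$ onto $F_k\cap V_{k,\alpha+1}$, hence induces an isomorphism $\psi_k$ from $\overline{E_k\cap U_{k,\alpha}}$ onto $\overline{F_k\cap V_{k,\alpha}}$ (and likewise $\psi_{k+1}$ at vertex $k+1$); the morphism identities $v_k\circ\varphi_k=\varphi_{k+1}\circ u_k$ make $\psi$ intertwine $u_{k,\alpha}$ and $v_{k,\alpha}$ on these subspaces, so — using that $\psi_{k+1}$ is injective — $\psi_k$ carries $\overline{E_k\cap U_{k,\alpha}}\cap\Ker u_{k,\alpha}$ onto $\overline{F_k\cap V_{k,\alpha}}\cap\Ker v_{k,\alpha}=\Ker v_{k,\alpha}$, and comparing dimensions (all finite) forces $\Ker u_{k,\alpha}\subset\overline{E_k\cap U_{k,\alpha}}$. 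Finally, choosing $\overline{y_1}$ in the fibre $v_{k,\alpha}^{-1}(\overline{y'})$ — which by assumption sits in $\overline{F_k\cap V_{k,\alpha}}$ — and writing $\overline{y_1}=\psi_k(\overline{a_1})$ with $\overline{a_1}\in\overline{E_k\cap U_{k,\alpha}}$, the intertwining together with $\overline{y'}=\psi_{k+1}\bigl(u_{k,\alpha}(\overline{x})\bigr)$ (which follows from $y'=\varphi_{k+1}(u_k(x))$, using $u_k(x)\in E_{k+1}$ and height-preservation) yields $u_{k,\alpha}(\overline{a_1})=u_{k,\alpha}(\overline{x})$, hence $\overline{x}-\overline{a_1}\in\Ker u_{k,\alpha}\subset\overline{E_k\cap U_{k,\alpha}}$, so $\overline{x}\in\overline{E_k\cap U_{k,\alpha}}$ — contradicting fact (b). Therefore a suitable $y$ exists. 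I expect the two genuinely delicate points to be the existence of the maximal-height representative (which really uses both the finiteness of $\Gamma$ and the reducedness of $u$) and this last dimension count, where the hypothesis $\kappa_{k,\alpha}(u)=\kappa_{k,\alpha}(v)$ is exactly what traps $\Ker u_{k,\alpha}$ inside the image of $E_k\cap U_{k,\alpha}$.
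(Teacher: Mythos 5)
Your proof is correct, and it takes a genuinely different --- and arguably more economical --- route than the paper's. The paper first proves two auxiliary results (Lemma \ref{superelemextensionlemma}, for adjoining an adapted pair $(x,y)$ of equal heights whose image pair already lies in $\Gamma_{k+1}$, and Lemma \ref{secondsuperelemextensionlemma} for the case $u_k(x)=0$, which is where the Kaplansky invariants enter), then normalizes $x$ by two adaptedness hypotheses (H1) and (H2) and runs a case analysis on $h_{k+1,u}(u_k(x))$: the value $\infty$, a successor ordinal (with a sub-case requiring an intermediate lift $x'$ of $u_k(x)$ and a separate treatment of $x-x'\in\Ker u_k$), and a limit ordinal (requiring a lift of intermediate height $\gamma$ together with $\kappa_{k,\gamma}(v)>0$). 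You instead work throughout at the single level $\alpha=h_{k,u}(x)$ in the quotient $V_{k,\alpha}/V_{k,\alpha+1}$: the set of admissible classes $\overline{y}$ is exactly the fibre $v_{k,\alpha}^{-1}(\overline{y'})$, a nonempty coset of $\Ker v_{k,\alpha}$ (nonempty by surjectivity of $v_{k,\alpha}$, and every class in it does lift to a genuine preimage of $y'$ since $V_{k+1,\alpha+2}=v_k(V_{k,\alpha+1})$), and the hypothesis $\kappa_{k,\alpha}(u)=\kappa_{k,\alpha}(v)$ is used exactly once, in the dimension-count contradiction showing that this fibre cannot be absorbed by $\overline{F_k\cap V_{k,\alpha}}$. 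This uniformly covers the kernel case ($\overline{y'}=0$), the successor case and the limit case, dispenses with (H2), and replaces the paper's adaptedness bookkeeping for $u_k(x)$ by the single condition $\overline{x}\notin\overline{E_k\cap U_{k,\alpha}}$. The two points you flag as delicate --- the existence of a maximal-height representative of $x+E_k$ (which the paper gets from Corollary \ref{adaptedcor}) and the trapping of $\Ker u_{k,\alpha}$ inside $\overline{E_k\cap U_{k,\alpha}}$ --- are indeed the load-bearing steps, and both are handled correctly; your coherence check for $\Gamma_k+\F(x,y)$ via the two facts (a) and (b) is also sound. What the paper's longer route buys is a pair of reusable elementary lemmas invoked at several places in its case analysis; what yours buys is the complete elimination of the ordinal case analysis.
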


\begin{theo}[Finite-Dimensional Extension Theorem]
Let $u=(U_k,u_k)_{k \in \Z/n}$ and $v$ be reduced locally nilpotent linear representations of $\calC_n$
that have the same cyclic Kaplansky invariants. Let $\Gamma$ be a
finite-dimensional coherent subrepresentation of $u \times v$, and let $k \in \Z/n$ and $x \in U_k$.
Then there exists a finite-dimensional coherent subrepresentation $\Gamma' \geq \Gamma$ of $u \times v$
whose first projection $k$-catches $x$.
\end{theo}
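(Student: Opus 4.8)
The plan is to deduce the Finite-Dimensional Extension Theorem from the Simple Extension Theorem by an induction on the local nilindex of $x$. The Simple Extension Theorem requires the hypothesis $u_k(x) \in \pi_1(\Gamma_{k+1})$, i.e.\ that the image of $x$ has already been caught; local nilpotency is precisely what will allow us to arrange this, because the forward orbit of $x$ reaches $0$ after finitely many steps, and $0$ is trivially caught by any subrepresentation.

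Concretely, I would prove by induction on the integer $m \geq 0$ the following statement: for every $j \in \Z/n$, every $y \in U_j$ with $\nu_{u,j}(y) \leq m$, and every finite-dimensional coherent subrepresentation $\Gamma$ of $u \times v$, there exists a finite-dimensional coherent subrepresentation $\Gamma' \geq \Gamma$ of $u \times v$ whose first projection $j$-catches $y$. Since $u$ is locally nilpotent, the given vector $x \in U_k$ has a finite local nilindex, so this statement applied with $m=\nu_{u,k}(x)$, $j=k$ and $y=x$ yields the theorem. For the base case $m=0$ the vector $y$ must equal $0_{U_j}$, and since $\Gamma_j$ is a linear subspace of $U_j \times V_j$ it contains $(0_{U_j},0_{V_j})$; hence $0_{U_j} \in \pi_1(\Gamma_j)$, so $\Gamma$ already $j$-catches $y$ and one takes $\Gamma'=\Gamma$. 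For the inductive step, assume the statement for $m$ and let $y \in U_j$ satisfy $\nu_{u,j}(y)=m+1$. Then $m+1 \geq 1$ forces $y \neq 0$, so $u_j(y) \in U_{j+1}$ has local nilindex $\nu_{u,j+1}(u_j(y))=\nu_{u,j}(y)-1=m$ by the identity recalled in Section \ref{locallyfiniteSection}. Applying the induction hypothesis to the class $j+1$, the vector $u_j(y)$, and the representation $\Gamma$, we obtain a finite-dimensional coherent $\Gamma'' \geq \Gamma$ whose first projection $(j+1)$-catches $u_j(y)$, that is $u_j(y) \in \pi_1(\Gamma''_{j+1})$. Now the hypothesis of the Simple Extension Theorem is satisfied for $\Gamma''$, the class $j$, and the vector $y$, so it provides a finite-dimensional coherent $\Gamma' \geq \Gamma''$ whose first projection $j$-catches $y$; since $\Gamma' \geq \Gamma'' \geq \Gamma$, this completes the induction.

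All the substance has been absorbed into the Simple Extension Theorem, so I do not expect any genuine obstacle here: the argument is a bootstrap that walks backwards along the finite orbit $x,\,u_k(x),\,u_{k+1}(u_k(x)),\dots,0$ of $x$, invoking the simple theorem once for each step, with coherence and finite-dimensionality carried along automatically because they are part of the conclusion of that theorem. The only points that need a little care are the bookkeeping of indices modulo $n$ and the use of the equality $\nu_{u,k+1}(u_k(x))=\nu_{u,k}(x)-1$, which ensures that the local nilindex strictly decreases at each step, hence that the induction terminates after exactly $\nu_{u,k}(x)$ applications of the Simple Extension Theorem.
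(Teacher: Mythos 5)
Your proposal is correct and follows exactly the same route as the paper: an induction on the local nilindex of $x$, with the base case $\nu_{u,k}(x)=0$ handled trivially (since $x=0$ is caught by any subrepresentation) and the inductive step obtained by first catching $u_k(x)$ via the induction hypothesis and then invoking the Simple Extension Theorem. The identity $\nu_{u,k+1}(u_k(x))=\nu_{u,k}(x)-1$ that you cite is precisely what the paper uses to drive the induction, so there is nothing to add.
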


In the remainder of this section, we prove that the Simple Extension Theorem implies the
Finite-Dimensional Extension Theorem, and then we prove that the Finite-Dimensional Extension Theorem implies the Graphic Extension Theorem.

\begin{proof}[The Simple Extension Theorem implies the Finite-Dimensional Extension Theorem]
Assume that the Simple Extension Theorem holds.
We prove the Finite-Dimensional Extension Theorem by induction on the local nilindex of $x$. If that nilindex equals zero, then the result is obvious
because $x=0$ and hence we can simply take $\Gamma':=\Gamma$. Fix $N \in \N^*$ and assume that $x$ has local nilindex $N$.
Then $u_k(x)$ has local nilindex $N-1$, so by induction there is a finite-dimensional coherent subrepresentation $\Gamma' \geq \Gamma$
of $u \times v$ such that $u_k(x) \in \pi_1(\Gamma'_{k+1})$. By the Simple Extension Theorem, we obtain a finite-dimensional coherent subrepresentation $\Gamma'' \geq \Gamma'$ of $u \times v$ whose first projection $k$-catches $x$. Hence, $\Gamma'' \geq \Gamma$ and $\Gamma''$ has the required properties.
\end{proof}

\begin{proof}[The Finite-Dimensional Extension Theorem implies the Graphic Extension Theorem]
We use the traditional zigzag argument.
Assume that the Finite-Dimensional Extension Theorem holds. Since $u$ and $v$ have countable dimension,
we can find sequences $(e_i)_{i \geq 0}$ and $(f_i)_{i \geq 0}$ of vectors, together with mappings
$p : \N \rightarrow \Z/n$ and $q : \N \rightarrow \Z/n$ such that, for all $k \in \Z/n$,
$(e_i)_{i \in p^{-1}\{k\}}$ generates $U_k$ and $(f_i)_{i \in q^{-1}\{k\}}$ generates $V_k$.

By induction, we construct a non-decreasing sequence $(\Gamma^{(i)})_{i \geq 0}$ of finite-dimensional coherent subrepresentations of $u \times v$
as follows:
\begin{itemize}
\item We put $\Gamma^{(0)}:=\Gamma$;
\item Assume that, for some integer $N>0$, we have constructed a finite sequence $\Gamma^{(0)} \leq \cdots \leq \Gamma^{(N-1)}$
of finite-dimensional coherent subrepresentations of $u \times v$
such that, for all $i \in \lcro 0,N-1\rcro$, the first projection of $\Gamma^{(i)}$ $p(i)$-catches $e_i$, and
the second projection $q(i)$-catches $f_i$.
\end{itemize}
Applying the Finite-Dimensional Extension Theorem to the vector $e_N$ and to $\Gamma^{(N)}$, we obtain a finite-dimensional coherent
subrepresentation $\Delta \geq \Gamma^{(N)}$ of $u \times v$ whose first projection $p(N)$-catches $e_N$.
Then, applying the Finite-Dimensional Extension Theorem to the transposed subrepresentation $\Delta^t$ and the vector $f_N$, we find a finite-dimensional coherent
subrepresentation $\Delta' \geq \Delta^t$ of $v \times u$ whose first projection $q(N)$-catches $f_N$.
Hence, the transposed subrepresentation $\Gamma^{(N+1)}:=(\Delta')^t$ of $u \times v$ satisfies $\Gamma^{(N+1)} \geq \Delta$,
and its second projection $q(N)$-catches $f_N$. Obviously, its first projection also $p(N)$-catches $e_N$.

Hence, by induction, we have a full non-decreasing sequence $(\Gamma^{(i)})_{i \in \N}$ of finite-dimensional subrepresentations of
$u \times v$ such that $\Gamma^{(0)}=\Gamma$ and, for all $i \in \N$, the first projection of
$\Gamma^{(i)}$ $p(i)$-catches $e_i$ and the second one $q(i)$-catches $f_i$.
From there, set
$$\widetilde{\Gamma}:=\Bigl(\underset{i \in \N}{\bigcup}\, \Gamma_k^{(i)}\Bigr)_{k \in \Z.}$$
Using the fact that $(\Gamma^{(i)})_{i \in \N}$ is non-decreasing and that each $\Gamma^{(i)}$ is coherent, it is straightforward to check that $\widetilde{\Gamma}$ is a coherent subrepresentation of $u \times v$. Finally, for all $i \in \N$, the first projection of
$\widetilde{\Gamma}$ $p(i)$-catches $e_i$, and the second one $q(i)$-catches $f_i$.
It follows that the projections of $\widetilde{\Gamma}$ are exactly $(U_k)_{k \in \Z/n}$ and $(V_k)_{k \in \Z/n}$, which completes the proof of the
Graphic Extension Theorem.
\end{proof}

Now, all that remains is to prove the Simple Extension Theorem. In order to do this, we need a few basic results on heights:
they are given in the next two sections. The proof of the Simple Extension Theorem is given in Section \ref{proofextensionSection}.

\subsection{Basic results on heights}

In the present section and the next one, $u=(U_k,u_k)_{k \in \Z/n}$ denotes a reduced locally nilpotent linear representation of $\calC_n$.
Our first lemma is stated for representations of $\calC_n$ on the category of sets, as it will be reused in that general context later
in the article.

\begin{lemma}[Lifting principle]\label{lemmehauteurrelev}
Let $f=(X_k,f_k)_{k \in \Z/n}$ be a representation of $\calC_n$ on the category of sets.
Let $k \in \Z/n$ and let $y \in X_k$ be such that $h_{k,f}(y)$ is an ordinal that has a predecessor.
Then there exists $x \in X_{k-1}$ such that
$h_{k-1,f}(x)=h_{k,f}(y)-1$ and $y=f_{k-1}(x)$.
\end{lemma}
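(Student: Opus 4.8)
The statement is the Lifting Principle (Lemma~\ref{lemmehauteurrelev}): if $y \in X_k$ has height $\alpha$ with a predecessor $\alpha-1$, we want a preimage $x \in X_{k-1}$ under $f_{k-1}$ whose $(k-1)$-th height is exactly $\alpha-1$. The plan is to unwind the definition of height via the chains $(X_{k,\beta})_\beta$. Since $h_{k,f}(y) = \alpha$, we have $y \in X_{k,\alpha} \setminus X_{k,\alpha+1}$. Because $\alpha$ has a predecessor, the defining recursion gives $X_{k,\alpha} = f_{k-1}(X_{k-1,\alpha-1})$, so $y = f_{k-1}(x)$ for some $x \in X_{k-1,\alpha-1}$; in particular $h_{k-1,f}(x) \geq \alpha-1$. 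It remains to show that we can choose such an $x$ with $h_{k-1,f}(x) = \alpha-1$ exactly, i.e.\ with $x \notin X_{k-1,\alpha}$.

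The key step is to produce, among all preimages of $y$ lying in $X_{k-1,\alpha-1}$, one that is not in $X_{k-1,\alpha}$. Suppose for contradiction that every $x \in X_{k-1,\alpha-1}$ with $f_{k-1}(x) = y$ actually lies in $X_{k-1,\alpha}$. I would then argue that this forces $y \in X_{k,\alpha+1}$, contradicting $h_{k,f}(y)=\alpha$. The cleanest route splits on whether $\alpha - 1$ has a predecessor or is a limit ordinal. If $\alpha-1$ has a predecessor, then $X_{k-1,\alpha} = f_{k-2}(X_{k-2,\alpha-1})$ need not directly help; instead I would use the characterization $X_{k,\alpha+1} = f_{k-1}(X_{k-1,\alpha})$: we have exhibited $x \in X_{k-1,\alpha-1}$ with $f_{k-1}(x) = y$, and the contradiction hypothesis says \emph{this particular} $x$ — and indeed any choice — lies in $X_{k-1,\alpha}$, so $y = f_{k-1}(x) \in f_{k-1}(X_{k-1,\alpha}) = X_{k,\alpha+1}$, done. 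Wait — that last line already gives the contradiction in \emph{all} cases, without splitting: the assumption "every preimage of $y$ in $X_{k-1,\alpha-1}$ lies in $X_{k-1,\alpha}$" combined with the existence of at least one such preimage immediately yields $y \in f_{k-1}(X_{k-1,\alpha}) = X_{k,\alpha+1}$. So the argument is short: pick any preimage $x_0 \in X_{k-1,\alpha-1}$ of $y$ (exists since $y \in X_{k,\alpha} = f_{k-1}(X_{k-1,\alpha-1})$); if $x_0 \in X_{k-1,\alpha}$ then $y \in X_{k,\alpha+1}$, absurd; hence $x_0 \notin X_{k-1,\alpha}$, and since $x_0 \in X_{k-1,\alpha-1}$ we get $h_{k-1,f}(x_0) = \alpha-1$.

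The main obstacle, such as it is, is purely bookkeeping: one must be careful that $X_{k,\alpha+1} = f_{k-1}(X_{k-1,\alpha})$ holds by definition precisely because $\alpha+1$ has a predecessor (namely $\alpha$), which is automatic, so no case distinction on $\alpha$ being a limit is needed here — the only hypothesis used is that $\alpha$ itself has a predecessor, which guarantees $X_{k,\alpha} = f_{k-1}(X_{k-1,\alpha-1})$ and hence the existence of the initial preimage. I would also remark that one should double-check the edge case where $\alpha-1$ equals $0$: then $X_{k-1,\alpha-1} = X_{k-1}$, and the argument goes through verbatim. No choice axiom subtleties arise since we only need one preimage. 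I expect the whole proof to be four or five lines.
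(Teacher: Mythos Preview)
Your proposal is correct and follows essentially the same approach as the paper: pick a preimage $x \in X_{k-1,\alpha-1}$ of $y$ using $X_{k,\alpha} = f_{k-1}(X_{k-1,\alpha-1})$, then observe that $x \in X_{k-1,\alpha}$ would force $y \in f_{k-1}(X_{k-1,\alpha}) = X_{k,\alpha+1}$, contradicting $h_{k,f}(y)=\alpha$. Your self-correction midway (realizing no case split is needed) lands exactly on the paper's three-line argument.
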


\begin{proof}
Set $\alpha:=h_{k,f}(y)$. Then $y \in X_{k,\alpha}$, and the very definition of $X_{k,\alpha}$
shows that $y=f_{k-1}(x)$ for some $x \in X_{k-1,\alpha-1}$. Moreover, the element $x$ cannot belong to
$X_{k-1,\alpha}$ otherwise $y$ would belong to $X_{k,\alpha+1}$. Hence, $h_{k-1,f}(x)=\alpha-1$.
\end{proof}

The next lemmas are special cases of standard results on the height with respect to continuous chains of linear subspaces
whose intersection is the zero subspace. We leave the proofs to the reader.

\begin{lemma}\label{lemmeliberte}
Let $k \in \Z/n$. Let $(x_i)_{i \in I}$ be a family of vectors of $U_k \setminus \{0\}$.
If $i \in I \mapsto h_{k,u}(x_i)$ is injective, then $(x_i)_{i \in I}$ is linearly independent.
\end{lemma}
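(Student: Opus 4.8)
The plan is to run the classical ``distinct heights $\Rightarrow$ linear independence'' argument for the continuous chain $(U_{k,\alpha})_\alpha$ attached to $u$ at the vertex $k$. Since linear independence of a family reduces to that of its finite subfamilies, I would first reduce to showing that an arbitrary finite subfamily $x_{i_1},\dots,x_{i_m}$, with the $i_j$ pairwise distinct, is linearly independent. Before starting I would recall two facts from Section~\ref{invariantsSection}: first, because $u$ is reduced, $U_{k,\infty}=\bigcap_\alpha U_{k,\alpha}=\{0\}$, so each nonzero $x_{i_j}$ has a genuine \emph{ordinal} height $h_{k,u}(x_{i_j})$, characterised by $x_{i_j}\in U_{k,h_{k,u}(x_{i_j})}\setminus U_{k,h_{k,u}(x_{i_j})+1}$; second, the chain $(U_{k,\alpha})_\alpha$ is non-increasing, so $h_{k,u}(x_{i_j})\geq\beta$ forces $x_{i_j}\in U_{k,\beta}$.

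Then I would argue by contradiction: assume $\sum_{j=1}^m \lambda_j x_{i_j}=0$ is a nontrivial relation, and, after discarding the vanishing coefficients, assume $\lambda_j\neq 0$ for every $j$. By the injectivity hypothesis the ordinals $h_{k,u}(x_{i_1}),\dots,h_{k,u}(x_{i_m})$ are pairwise distinct, so the minimum $\alpha:=\min_j h_{k,u}(x_{i_j})$ is attained at a single index $j_0$. For $j\neq j_0$ we have $h_{k,u}(x_{i_j})>\alpha$, hence $x_{i_j}\in U_{k,\alpha+1}$, so
$$\lambda_{j_0}\,x_{i_{j_0}}=-\sum_{j\neq j_0}\lambda_j\,x_{i_j}\in U_{k,\alpha+1};$$
since $\lambda_{j_0}\neq 0$ and $U_{k,\alpha+1}$ is a linear subspace of $U_k$, this yields $x_{i_{j_0}}\in U_{k,\alpha+1}$, contradicting $h_{k,u}(x_{i_{j_0}})=\alpha$. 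Hence no nontrivial relation exists.

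I do not expect a real obstacle here: this is the standard argument for valuated vector spaces. The only points needing a little care — and all already in place by Section~\ref{invariantsSection} — are the reduction to finite subfamilies, the use of reducedness of $u$ to ensure the heights in play are ordinals and not $\infty$ (so that ``take the term of least height'' is legitimate), and the monotonicity of the chain used to turn ``height $>\alpha$'' into membership in $U_{k,\alpha+1}$.
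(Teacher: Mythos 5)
Your proof is correct and is exactly the standard ``term of least height'' argument that the paper has in mind: indeed, the paper explicitly omits the proof of this lemma as a standard fact about heights with respect to continuous chains of subspaces with zero intersection. All the delicate points (reduction to finite subfamilies, reducedness guaranteeing ordinal heights, monotonicity of the chain turning $h_{k,u}(x)>\alpha$ into $x\in U_{k,\alpha+1}$) are handled properly.
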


\begin{lemma}\label{lemmehauteurelem}
Let $k \in \Z/n$, and let $x$ and $y$ belong to $U_k$.
Then $h_{k,u}(x+y) \geq \min(h_{k,u}(x),h_{k,u}(y))$, and equality holds if
$h_{k,u}(x) \neq h_{k,u}(y)$.
Besides, $h_{k,u}(\lambda x)=h_{k,u}(x)$ for all $\lambda \in \F \setminus \{0\}$.
\end{lemma}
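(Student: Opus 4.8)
The plan is to derive all three assertions from one elementary remark about the continuous chain $(U_{k,\alpha})_\alpha$ of linear subspaces of $U_k$ introduced above: for every $x \in U_k$ and every ordinal $\beta$, one has
$$x \in U_{k,\beta} \iff h_{k,u}(x) \geq \beta,$$
where we recall that $\alpha < \infty$ for every ordinal $\alpha$. First I would prove this equivalence. If $x = 0$, then $x$ lies in every $U_{k,\beta}$ (these are linear subspaces) and $h_{k,u}(x) = \infty \geq \beta$, so both sides hold. Assume now $x \neq 0$ and set $\gamma := h_{k,u}(x)$, so that $x \in U_{k,\gamma} \setminus U_{k,\gamma+1}$. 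If $\gamma \geq \beta$, then $U_{k,\gamma} \subseteq U_{k,\beta}$ since the chain is non-increasing, hence $x \in U_{k,\beta}$; conversely, if $\gamma < \beta$, then $\gamma + 1 \leq \beta$, so $U_{k,\beta} \subseteq U_{k,\gamma+1}$ and therefore $x \notin U_{k,\beta}$. This settles the equivalence.

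Granting it, each assertion follows in a few lines. For the inequality, put $\alpha := \min\bigl(h_{k,u}(x),h_{k,u}(y)\bigr)$: if $\alpha = \infty$ then $x = y = 0$ and the claim is trivial, and otherwise $x$ and $y$ both lie in the linear subspace $U_{k,\alpha}$, hence so does $x + y$, which gives $h_{k,u}(x+y) \geq \alpha$. For the equality clause, assume without loss of generality that $h_{k,u}(x) < h_{k,u}(y)$, so that $\alpha = h_{k,u}(x)$, and suppose for contradiction that $h_{k,u}(x+y) > \alpha$. Then $h_{k,u}(x+y) \geq \alpha + 1$ and $h_{k,u}(y) > \alpha$ yield $x + y \in U_{k,\alpha+1}$ and $y \in U_{k,\alpha+1}$, whence $x = (x+y) - y \in U_{k,\alpha+1}$, contradicting $h_{k,u}(x) = \alpha$. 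Finally, for the scalar invariance the case $x = 0$ is trivial, and for $x \neq 0$ and $\lambda \neq 0$ the identity $x = \lambda^{-1}(\lambda x)$ together with the linearity of each $U_{k,\beta}$ shows that $\lambda x \in U_{k,\beta} \iff x \in U_{k,\beta}$ for every ordinal $\beta$; by the equivalence, $h_{k,u}(\lambda x) \geq \beta \iff h_{k,u}(x) \geq \beta$ for every $\beta$, so $h_{k,u}(\lambda x) = h_{k,u}(x)$.

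I do not expect any genuine obstacle here. The only points deserving attention are the need to handle the value $\infty$ — the height of the zero vector, which is the only vector of height $\infty$ since $u$ is reduced — uniformly alongside the ordinal values, and the small ordinal manipulation $\gamma < \beta \Rightarrow \gamma + 1 \leq \beta$ used in the auxiliary equivalence, which relies on the chain $(U_{k,\alpha})_\alpha$ being non-increasing and continuous, a fact established just before the statement.
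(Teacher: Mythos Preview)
Your proof is correct. The paper itself omits the proof of this lemma, stating only that it is a special case of standard results on heights with respect to continuous chains of linear subspaces with trivial intersection and leaving it to the reader; your argument is precisely the expected direct verification.
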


From Lemma \ref{lemmeliberte}, one immediately deduces:

\begin{lemma}\label{finiteheightlemma}
Let $k \in \Z/n$, and let $E$ be a finite-dimensional linear subspace of $U_k$.
Then the set $h_{k,u}(E)$ is finite.
\end{lemma}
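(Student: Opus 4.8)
The plan is to argue by contradiction, extracting an infinite linearly independent family inside $E$ from an infinite set of heights, which is impossible in finite dimension. This is the standard trick behind Lemma \ref{lemmeliberte}, applied in reverse.

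First I would suppose that $h_{k,u}(E)$ is infinite. The value $\infty$ contributes at most one element to this set (it is attained, e.g., at $0_{U_k}$), so the set $A$ of \emph{ordinals} lying in $h_{k,u}(E)$ is still infinite. For each $\alpha \in A$, I would pick a vector $x_\alpha \in E$ with $h_{k,u}(x_\alpha)=\alpha$; since $h_{k,u}(0_{U_k})=\infty$ is not an ordinal, each such $x_\alpha$ is nonzero. Thus $(x_\alpha)_{\alpha \in A}$ is a family of vectors of $U_k \setminus \{0\}$, and the map $\alpha \in A \mapsto h_{k,u}(x_\alpha)=\alpha$ is (tautologically) injective.

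Then I would invoke Lemma \ref{lemmeliberte} to conclude that $(x_\alpha)_{\alpha \in A}$ is linearly independent. As all these vectors lie in $E$ and $A$ is infinite, this contradicts the assumption that $\dim E$ is finite. Hence $h_{k,u}(E)$ must be finite, which is the claim.

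There is essentially no obstacle here: the statement is a direct consequence of Lemma \ref{lemmeliberte}, and the only point requiring a word of care is separating the honorary value $\infty$ from the genuine ordinal heights before choosing the witnesses $x_\alpha$, so that each chosen vector is guaranteed nonzero and the hypothesis of Lemma \ref{lemmeliberte} is met.
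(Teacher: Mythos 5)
Your proof is correct and is exactly the argument the paper intends: the paper derives Lemma \ref{finiteheightlemma} immediately from Lemma \ref{lemmeliberte} by the same contradiction, extracting an infinite linearly independent family from infinitely many distinct ordinal heights. Your extra care in separating the value $\infty$ (attained only at $0$ since $u$ is reduced) is a correct and welcome detail.
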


\begin{Def}
Let $k \in \Z/n$, and let $A_k$ be a linear subspace of $U_k$.
A vector $x\in U_k$ is called \textbf{$k$-adapted} to $A_k$ whenever
$$h_{k,u}(x)=\max \bigl(h_{k,u}(x+A_k)\bigr).$$
\end{Def}

As a corollary to Lemma \ref{finiteheightlemma} (applied to the linear subspace $\F x+A_k$), we have:

\begin{cor}\label{adaptedcor}
Let $k \in \Z/n$, $x \in U_k$, and $A_k$ be a linear subspace of $U_k$.
Then some vector of the affine subspace $x+A_k$ is $k$-adapted to $A_k$.
\end{cor}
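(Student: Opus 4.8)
The plan is to reduce the statement to the finiteness of a certain set of heights. Being \emph{$k$-adapted} to $A_k$ only asks that $h_{k,u}(x+A_k)$ --- the set of heights of the vectors of the affine subspace $x+A_k$ --- possess a greatest element. Since heights take their values in the class of ordinals together with $\infty$, and since, by our convention, $\infty$ dominates every ordinal, any nonempty \emph{finite} subset of that class does have a greatest element. So it suffices to prove that $h_{k,u}(x+A_k)$ is finite and nonempty, and then to pick a vector realising the maximum.

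Concretely, I would introduce the linear subspace $E:=\F x+A_k$ of $U_k$, which is finite-dimensional (this is the point at which the finite-dimensionality of $A_k$ is used, as the parenthetical before the corollary signals). By Lemma \ref{finiteheightlemma}, the set $h_{k,u}(E)$ is finite. As $x+A_k\subseteq E$, the set $h_{k,u}(x+A_k)$ is contained in $h_{k,u}(E)$, hence finite; and it is nonempty since it contains $h_{k,u}(x)$. Let $\mu$ be its greatest element. Then any vector $y\in x+A_k$ with $h_{k,u}(y)=\mu$ satisfies $h_{k,u}(y)=\max\bigl(h_{k,u}(y+A_k)\bigr)$, because $y+A_k=x+A_k$; thus $y$ is $k$-adapted to $A_k$, which proves the corollary.

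I do not expect any genuine obstacle: the result is an immediate corollary of Lemma \ref{finiteheightlemma}. The only two points deserving a line of care are that $h_{k,u}(x+A_k)$ is nonempty (it contains $h_{k,u}(x)$; in the degenerate case $x\in A_k$ one has $0\in x+A_k$ and the maximum $\infty$ is realised by $0$), and that the ordering on the ordinals extended by $\infty$ is a total order with a top element, so that ``$\max$'' is well defined on finite subsets.
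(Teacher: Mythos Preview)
Your proof is correct and follows exactly the route the paper intends: the parenthetical before the corollary says to apply Lemma~\ref{finiteheightlemma} to the linear subspace $\F x+A_k$, and you carry this out in full detail. Your remark that the finite-dimensionality of $A_k$ is being tacitly assumed is well observed; the paper only uses the corollary when $A_k$ arises as a component of a finite-dimensional subrepresentation.
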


Note that if $x\in A_k$, then $x$ is $k$-adapted to $A_k$ if and only if $x=0_{U_k}$.
Next, we give a useful characterization of $k$-adaptedness:

\begin{lemma}\label{caracadapteelem}
Let $k \in \Z/n$, and let $x \in U_k$ and $A_k$ be a linear subspace of $U_k$.
Then $x$ is $k$-adapted to $A_k$ if and only if
$$\forall y \in A_k, \; h_{k,u}(x+y)=\min\bigl(h_{k,u}(y),h_{k,u}(x)\bigr).$$
\end{lemma}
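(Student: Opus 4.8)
The plan is to derive both implications directly from Lemma~\ref{lemmehauteurelem}, after unwinding the definition: saying that $x$ is $k$-adapted to $A_k$ means precisely that $h_{k,u}(x)$ is the maximum of $h_{k,u}(x+A_k)$, which (since $x$ itself lies in the coset $x+A_k$, via $y=0_{U_k}$) is equivalent to the inequality $h_{k,u}(x+y)\le h_{k,u}(x)$ holding for every $y\in A_k$.

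For the forward implication, assume $x$ is $k$-adapted to $A_k$ and fix $y\in A_k$; I would split into two cases. If $h_{k,u}(x)\neq h_{k,u}(y)$, then Lemma~\ref{lemmehauteurelem} gives $h_{k,u}(x+y)=\min\bigl(h_{k,u}(x),h_{k,u}(y)\bigr)$ outright. If $h_{k,u}(x)=h_{k,u}(y)$, then Lemma~\ref{lemmehauteurelem} still yields $h_{k,u}(x+y)\ge\min\bigl(h_{k,u}(x),h_{k,u}(y)\bigr)=h_{k,u}(x)$, while $k$-adaptedness gives the reverse inequality $h_{k,u}(x+y)\le h_{k,u}(x)$; hence $h_{k,u}(x+y)=h_{k,u}(x)=\min\bigl(h_{k,u}(x),h_{k,u}(y)\bigr)$, as wanted.

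The converse is immediate: if $h_{k,u}(x+y)=\min\bigl(h_{k,u}(y),h_{k,u}(x)\bigr)$ for every $y\in A_k$, then in particular $h_{k,u}(x+y)\le h_{k,u}(x)$ for all such $y$, and this bound is attained at $y=0_{U_k}$; therefore $h_{k,u}(x)=\max\bigl(h_{k,u}(x+A_k)\bigr)$, i.e.\ $x$ is $k$-adapted to $A_k$.

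Since the argument is just a two-case dichotomy feeding off Lemma~\ref{lemmehauteurelem}, there is no genuine obstacle here; the only point requiring mild care is the bookkeeping around the convention $\alpha<\infty$ and the fact that, $u$ being reduced, $h_{k,u}(z)=\infty$ forces $z=0_{U_k}$. In particular, when $h_{k,u}(x)=\infty$ one has $x=0_{U_k}\in A_k$, and both the formula $h_{k,u}(0_{U_k}+y)=h_{k,u}(y)=\min\bigl(h_{k,u}(y),\infty\bigr)$ and the adaptedness condition $h_{k,u}(y)\le\infty$ hold trivially, so these degenerate cases cause no trouble.
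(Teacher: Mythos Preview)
Your proof is correct and follows essentially the same approach as the paper's: both arguments establish the forward implication by a two-case split on whether $h_{k,u}(x)=h_{k,u}(y)$, invoking Lemma~\ref{lemmehauteurelem} in each case together with the adaptedness bound $h_{k,u}(x+y)\le h_{k,u}(x)$. The only cosmetic difference is that the paper proves the converse by contrapositive (if $x$ is not adapted, pick $y$ with $h_{k,u}(x+y)>h_{k,u}(x)$ and observe the formula fails), whereas you argue directly; your closing remarks on the $\infty$ case are harmless extra caution.
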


\begin{proof}
Assume that $x$ is not $k$-adapted to $A_k$.
Then there exists $y \in A_k$ such that $h_{k,u}(x)<h_{k,u}(x+y)$. Hence $h_{k,u}(x+y) \neq \min(h_{k,u}(x),h_{k,u}(y))$.

Assume now that $x$ is $k$-adapted to $A_k$.
Let $y \in A_k$. We already have
$\min(h_{k,u}(x),h_{k,u}(y)) \leq h_{k,u}(x+y) \leq h_{k,u}(x)$. If $h_{k,u}(x) \neq h_{k,u}(y)$,
the first inequality is an equality by Lemma \ref{lemmehauteurelem}. Otherwise the two inequalities yield $h_{k,u}(x+y)=h_{k,u}(x)=\min(h_{k,u}(x),h_{k,u}(y))$.
\end{proof}

\begin{lemma}\label{hauteuroutsideadapted}
Let $k \in \Z/n$,  $x \in U_k$, and $A_k$ be a linear subspace of $U_k$.
Assume that $h_{k,u}(x) \not\in h_{k,u}(A_k)$.
Then $x$ is $k$-adapted to $A_k$.
\end{lemma}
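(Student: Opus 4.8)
The plan is to derive this immediately from Lemma \ref{lemmehauteurelem} together with the characterization of $k$-adaptedness given in Lemma \ref{caracadapteelem}. First I would unpack the hypothesis: $h_{k,u}(x) \not\in h_{k,u}(A_k)$ says precisely that $h_{k,u}(y) \neq h_{k,u}(x)$ for \emph{every} $y \in A_k$. In particular, taking $y=0_{U_k}$ (which lies in $A_k$ and has height $\infty$ because $u$ is reduced) forces $h_{k,u}(x)$ to be an ordinal, i.e.\ $x \neq 0_{U_k}$; this observation is reassuring but not strictly needed for the argument.

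Next, I would fix an arbitrary $y \in A_k$. Since $h_{k,u}(x) \neq h_{k,u}(y)$, the equality clause of Lemma \ref{lemmehauteurelem} applies and yields $h_{k,u}(x+y) = \min\bigl(h_{k,u}(x),h_{k,u}(y)\bigr)$. As $y$ ranges over all of $A_k$, this is exactly the condition appearing in Lemma \ref{caracadapteelem}, so that lemma gives at once that $x$ is $k$-adapted to $A_k$, which is the desired conclusion. If one prefers to avoid quoting Lemma \ref{caracadapteelem}, the same computation already gives $h_{k,u}(x+y) \leq h_{k,u}(x)$ for all $y \in A_k$, while $y = 0_{U_k}$ realizes the value $h_{k,u}(x)$; hence $\max\bigl(h_{k,u}(x+A_k)\bigr) = h_{k,u}(x)$, which is the definition of $k$-adaptedness.

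There is essentially no obstacle here: the statement is a one-line consequence of the two preceding lemmas. The only point requiring a little care is the correct reading of the hypothesis — it asserts that \emph{no} vector of $A_k$, the zero vector included, has the same height as $x$ — so that the case distinction in Lemma \ref{lemmehauteurelem} can be invoked for every $y \in A_k$ without exception.
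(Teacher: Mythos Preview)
Your proposal is correct and matches the paper's proof essentially verbatim: the paper also fixes $y \in A_k$, invokes the equality case of Lemma \ref{lemmehauteurelem} from the hypothesis $h_{k,u}(x)\neq h_{k,u}(y)$, and concludes via Lemma \ref{caracadapteelem}. Your additional remarks (that $x\neq 0_{U_k}$, and the alternative bypassing Lemma \ref{caracadapteelem}) are sound but not needed.
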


\begin{proof}
Let $y \in A_k$. As $h_{k,u}(x) \neq h_{k,u}(y)$, Lemma \ref{lemmehauteurelem} yields $h_{k,u}(x+y)=\min(h_{k,u}(x),h_{k,u}(y))$. The conclusion follows from Lemma \ref{caracadapteelem}.
\end{proof}

\begin{lemma}\label{adaptedimagelemma}
Let $k \in \Z/n$ and $x \in U_k$, and let $A_k$ be a linear subspace of $U_k$ that does not contain $x$.
Set $\alpha:=h_{k,u}(x)$.
Assume that $u_k(x)$ is $(k+1)$-adapted to $u_k(A_k \cap U_{k,\alpha})$
and that $h_{k+1,u}(u_k(x))=\alpha+1$. Then $x$ is $k$-adapted to $A_k$.
\end{lemma}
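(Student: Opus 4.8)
The plan is to argue by contradiction and to reduce everything to the characterization of $k$-adaptedness provided by Lemma~\ref{caracadapteelem}. Suppose that $x$ is not $k$-adapted to $A_k$. Then Lemma~\ref{caracadapteelem} produces a vector $y \in A_k$ such that $h_{k,u}(x+y) \neq \min\bigl(h_{k,u}(x),h_{k,u}(y)\bigr)$. First I would invoke Lemma~\ref{lemmehauteurelem}: since equality would hold whenever $h_{k,u}(x) \neq h_{k,u}(y)$, the failure of equality forces $h_{k,u}(y) = h_{k,u}(x) = \alpha$; and then, as $h_{k,u}(x+y) \geq \min = \alpha$ but $h_{k,u}(x+y) \neq \alpha$, we conclude $h_{k,u}(x+y) > \alpha$, i.e.\ $x+y \in U_{k,\alpha+1}$.

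The second step is to transport this through $u_k$. On one hand, $h_{k,u}(y)=\alpha$ gives $y \in A_k \cap U_{k,\alpha}$, hence $u_k(y) \in u_k(A_k \cap U_{k,\alpha})$. On the other hand, from $x+y \in U_{k,\alpha+1}$ and the recursive definition $U_{k+1,\beta+1}=u_k(U_{k,\beta})$ of the chain we get $u_k(x+y) \in U_{k+1,\alpha+2}$, so that $h_{k+1,u}\bigl(u_k(x)+u_k(y)\bigr) \geq \alpha+2 > \alpha+1 = h_{k+1,u}\bigl(u_k(x)\bigr)$. Finally I would apply Lemma~\ref{caracadapteelem} a second time, now to the $(k+1)$-adapted vector $u_k(x)$ and the subspace $u_k(A_k \cap U_{k,\alpha})$: since $u_k(y)$ belongs to that subspace, $h_{k+1,u}\bigl(u_k(x)+u_k(y)\bigr)$ would have to equal $\min\bigl(h_{k+1,u}(u_k(x)),h_{k+1,u}(u_k(y))\bigr) \leq h_{k+1,u}(u_k(x)) = \alpha+1$, contradicting the strict inequality just obtained. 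Hence $x$ is $k$-adapted to $A_k$.

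I do not expect a serious obstacle here: the argument is a short computation, and the only point requiring mild care is the bookkeeping of ordinal indices, namely the implications $h_{k,u}(x+y) > \alpha \Rightarrow x+y \in U_{k,\alpha+1} \Rightarrow u_k(x+y) \in U_{k+1,\alpha+2}$, which are immediate from the definition of the chains $(U_{k,\beta})_\beta$. Two auxiliary observations make the proof airtight: $\alpha$ is automatically an ordinal (otherwise the hypothesis $h_{k+1,u}(u_k(x))=\alpha+1$ would be meaningless), so all ordinal arithmetic is legitimate; and the hypothesis that $A_k$ does not contain $x$ is in fact not needed for this argument, since if $x \in A_k$ then $u_k(x) \in u_k(A_k \cap U_{k,\alpha})$ would be a nonzero vector of that subspace, hence not $(k+1)$-adapted to it by the remark following Corollary~\ref{adaptedcor}, so the statement holds vacuously in that case.
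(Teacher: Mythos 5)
Your proof is correct and follows essentially the same reasoning as the paper's: from a $y\in A_k$ with $h_{k,u}(x+y)>\alpha$ you force $h_{k,u}(y)=\alpha$ via Lemma~\ref{lemmehauteurelem}, push through $u_k$ to get $h_{k+1,u}\bigl(u_k(x)+u_k(y)\bigr)>\alpha+1$, and contradict the $(k+1)$-adaptedness of $u_k(x)$. The only cosmetic differences are that you route through Lemma~\ref{caracadapteelem} instead of the bare definition and bound $h_{k+1,u}(u_k(x+y))\ge\alpha+2$ by the chain recursion where the paper writes $h_{k+1,u}(u_k(x+y))>h_{k,u}(x+y)\ge\alpha+1$; your closing observation that the hypothesis $x\notin A_k$ is dispensable is also correct.
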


\begin{proof}
Let $y \in A_k$. Note that $x+y \neq 0_{U_k}$, whence $h_{k,u}(x+y)$ is an ordinal.
If $h_{k,u}(x+y)>h_{k,u}(x)$, then on the one hand $h_{k,u}(y)=h_{k,u}(x)$ and hence $u_k(y) \in u_k(A_k \cap U_{k,\alpha})$,
and on the other hand $h_{k+1,u}(u_k(x)+u_k(y))=h_{k+1,u}(u_k(x+y))> h_{k,u}(x+y) \geq \alpha+1=h_{k+1,u}(u_k(x))$;
but this would contradict the assumption that $u_k(x)$ is adapted to $u_k(A_k \cap U_{k,\alpha})$.
Hence, $h_{k,u}(x+y) \leq h_{k,u}(x)$. This shows that $x$ is $k$-adapted to $A_k$.
\end{proof}

\subsection{Cyclic Kaplansky invariants and adaptedness}

In order to prove the Simple Extension Theorem, it is necessary that we delve deeper into the meaning of the cyclic Kaplansky invariants.

Given $k \in \Z/n$ and a subrepresentation $A$ of $U$, it is convenient to put, for every ordinal $\alpha$,
$$A_{k,\alpha}:=\{x \in A_k : \; h_{k,u}(x) \geq \alpha\}=U_{k,\alpha} \cap A_k.$$
Beware however that this notation should not be confused with the notation that arises when one views $A$ as a linear representation of $\mathcal{C}_n$
(this would produce different subspaces in general).

Let $\alpha$ be an ordinal. Remember that the cardinal $\kappa_{k,\alpha}(u)$ is defined as the dimension of the kernel of the linear mapping
$$u_{k,\alpha} : U_{k,\alpha}/U_{k,\alpha+1} \rightarrow U_{k+1,\alpha+1}/U_{k+1,\alpha+2}$$
induced by $u_k$. This kernel is simply the quotient space
$\bigl[U_{k,\alpha} \cap u_k^{-1}(U_{k+1,\alpha+2})\bigr]/U_{k,\alpha+1}$.
Besides, one checks that
\begin{equation}\label{interpretinvariant}
U_{k,\alpha} \cap u_k^{-1}(U_{k+1,\alpha+2})=U_{k,\alpha+1}+(U_{k,\alpha} \cap \Ker u_k).
\end{equation}
Let indeed $x \in U_{k,\alpha} \cap u_k^{-1}(U_{k+1,\alpha+2})$. Then $h_{k+1,u}(u_k(x)) \geq \alpha+2$, and hence
$u_k(x)=u_k(y)$ for some $y \in U_{k,\alpha+1}$.
It follows that $u_k(x-y)=0$ and $x-y \in U_{k,\alpha}$,
from where we find that $x$ belongs to $U_{k,\alpha+1}+(U_{k,\alpha} \cap \Ker u_k)$.
The converse inclusion is straightforward.

Let us get back to the subrepresentation $A$.
We see that $A_{k,\alpha} \cap u_k^{-1}(A_{k+1,\alpha+2})$ is a linear subspace of $U_{k,\alpha} \cap u_k^{-1}(U_{k+1,\alpha+2})$
and that its intersection with $U_{k,\alpha+1}$ is obviously $A_{k,\alpha+1}$, yielding a natural linear injection
$$\bigl[A_{k,\alpha} \cap u_k^{-1}(A_{k+1,\alpha+2})\bigr]/A_{k,\alpha+1} \hookrightarrow
\bigl[U_{k,\alpha} \cap u_k^{-1}(U_{k+1,\alpha+2})\bigr]/U_{k,\alpha+1}=\Ker u_{k,\alpha}$$
whose range equals $\bigl[U_{k,\alpha+1}+\bigl(A_{k,\alpha} \cap u_k^{-1}(A_{k+1,\alpha+2})\bigr)\bigr]/U_{k,\alpha+1.}$

This allows us to characterize the adaptation of a vector of the kernel of $u_k$ to a subrepresentation of $u$:

\begin{lemma}\label{caracadaptedkernel}
Let $A$ be a finite-dimensional subrepresentation of $u$, and let $k\in \Z/n$ and $x \in U_k \setminus A_k$
be such that $u_k(x)=0$. Set $\alpha:=h_{k,u}(x)$.
Then $x$ is $k$-adapted to $A_k$ if and only if
$x \not\in U_{k,\alpha+1}+\bigl(A_{k,\alpha} \cap u_k^{-1}(A_{k+1,\alpha+2})\bigr)$.
\end{lemma}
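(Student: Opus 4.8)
The plan is to translate the condition ``$x$ is $k$-adapted to $A_k$'' into a membership statement and match it term by term with the right-hand side. First note that $0 \in A_k$ while $x \notin A_k$, so $x \neq 0_{U_k}$; since $u$ is reduced ($U_{k,\infty}=\{0\}$), the ordinal $\alpha:=h_{k,u}(x)$ is genuine and $x \in U_{k,\alpha}\setminus U_{k,\alpha+1}$. Unwinding the definition of adaptedness (or invoking Lemma \ref{caracadapteelem}), $x$ fails to be $k$-adapted to $A_k$ precisely when there is some $y \in A_k$ with $h_{k,u}(x+y)>\alpha$, i.e.\ $x+y \in U_{k,\alpha+1}$. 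Thus it suffices to show that the existence of such a $y$ is equivalent to $x \in U_{k,\alpha+1}+\bigl(A_{k,\alpha}\cap u_k^{-1}(A_{k+1,\alpha+2})\bigr)$.

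For the forward implication, assume $y \in A_k$ satisfies $x+y \in U_{k,\alpha+1}$. Applying Lemma \ref{lemmehauteurelem} to $y=(x+y)-x$ gives $h_{k,u}(y)\geq \min\bigl(h_{k,u}(x+y),h_{k,u}(x)\bigr)=\alpha$, so $y \in U_{k,\alpha}\cap A_k=A_{k,\alpha}$. Next, since $u_k(x)=0$ we have $u_k(y)=u_k(x+y)\in u_k(U_{k,\alpha+1})=U_{k+1,\alpha+2}$, while $u_k(y)\in u_k(A_k)\subset A_{k+1}$ because $A$ is a subrepresentation of $u$; hence $u_k(y)\in A_{k+1}\cap U_{k+1,\alpha+2}=A_{k+1,\alpha+2}$, i.e.\ $y \in u_k^{-1}(A_{k+1,\alpha+2})$. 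Therefore $x=(x+y)-y$ belongs to $U_{k,\alpha+1}+\bigl(A_{k,\alpha}\cap u_k^{-1}(A_{k+1,\alpha+2})\bigr)$.

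Conversely, if $x=z+y$ with $z \in U_{k,\alpha+1}$ and $y \in A_{k,\alpha}\cap u_k^{-1}(A_{k+1,\alpha+2})$, then $-y \in A_k$ and $x+(-y)=z\in U_{k,\alpha+1}$, so $h_{k,u}(x-y)\geq \alpha+1>\alpha=h_{k,u}(x)$; hence $x$ is not $k$-adapted to $A_k$. Taking contrapositives in both implications yields the claimed equivalence.

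I do not anticipate a genuine obstacle: once adaptedness has been reformulated as above, the proof is a short direct computation. The only step needing a little care is verifying that the correcting vector $y$ lies in the precise subspace $A_{k,\alpha}\cap u_k^{-1}(A_{k+1,\alpha+2})$, and not merely in $A_k$ --- this is where the hypotheses $u_k(x)=0$ and ``$A$ is a subrepresentation of $u$'', together with the identity $u_k(U_{k,\alpha+1})=U_{k+1,\alpha+2}$, are used. The finite-dimensionality of $A$ serves only to ensure (via Lemma \ref{finiteheightlemma}) that the maximum in the definition of adaptedness is attained; it is otherwise not needed here.
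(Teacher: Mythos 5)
Your proof is correct and follows essentially the same route as the paper's: reduce ``not $k$-adapted'' to the existence of $y \in A_k$ with $h_{k,u}(x+y)>\alpha$, then check that this $y$ lands in $A_{k,\alpha}\cap u_k^{-1}(A_{k+1,\alpha+2})$ using $u_k(x)=0$, the subrepresentation property, and $u_k(U_{k,\alpha+1})=U_{k+1,\alpha+2}$, with the converse obtained by reversing the decomposition. The only cosmetic difference is that you establish $h_{k,u}(y)\geq\alpha$ (which suffices) where the paper notes $h_{k,u}(y)=\alpha$; your closing observation that the finite-dimensionality hypothesis is not actually used in this particular lemma is also accurate.
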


\begin{proof}
Assume that $x$ is not $k$-adapted to $A_k$.
Then there exists $y \in A_k$ such that $h_{k,u}(x+y)>\alpha$.
We must have $h_{k,u}(y)=\alpha$ otherwise $h_{k,u}(x+y)\leq h_{k,u}(x)$ by Lemma \ref{lemmehauteurelem}.
Moreover, $u_k(y)=u_k(x+y) \in U_{k+1,\alpha+2}$ and $u_k(y) \in A_{k+1}$, and hence $-y \in A_{k,\alpha} \cap u_k^{-1}(A_{k+1,\alpha+2})$.
Writing $x=(x+y)+(-y)$, we conclude that $x$ belongs to $U_{k,\alpha+1}+(A_{k,\alpha} \cap u_k^{-1}(A_{k+1,\alpha+2}))$.

Conversely, assume that $x=y+z$ for some $y \in U_{k,\alpha+1}$ and some  $z \in A_{k,\alpha} \cap u_k^{-1}(A_{k+1,\alpha+2})$.
Then $x-z$ belongs to $x+A_k$ and $h_{k,u}(x-z)=h_{k,u}(y) \geq \alpha+1$, which shows that $x$ is not $k$-adapted to $A_k$.
\end{proof}

\subsection{Proof of the Simple Extension Theorem}\label{proofextensionSection}

Let $u$ and $v$ be reduced locally nilpotent linear representations of $\calC_n$ with the same cyclic Kaplansky invariants.
The proof of the Simple Extension Theorem is performed in three steps. In the first one, we prove the following basic result:

\begin{lemma}[Super-Elementary Extension Lemma]\label{superelemextensionlemma}
Let $\Gamma$ be a coherent subrepresentation of $u \times v$, let $k \in \Z/k$, and let
$(x,y) \in (U_k \times V_k) \setminus \Gamma_k$. Assume that $x$ is $k$-adapted to $\pi_1(\Gamma)_k$ and $y$ is $k$-adapted to $\pi_2(\Gamma)_k$.
Assume finally that $h_{k,u}(x)=h_{k,v}(y)$ and that $(u_k(x),v_k(y)) \in \Gamma_{k+1}$.
Define $\widetilde{\Gamma}=(\widetilde{\Gamma}_l)_{l \in \Z/n}$ as follows :
$\widetilde{\Gamma}_l:=\Gamma_l$ if $l \neq k$, and $\widetilde{\Gamma}_k:=\Gamma_k + \F(x,y)$.
We also write
$$\Gamma +_k (x,y):=\widetilde{\Gamma}.$$
Then $\widetilde{\Gamma}$ is a coherent subrepresentation of $u \times v$.
\end{lemma}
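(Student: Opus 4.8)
The claim is that $\widetilde\Gamma = \Gamma +_k (x,y)$ is a coherent subrepresentation of $u\times v$. There are three things to verify: that $\widetilde\Gamma$ is a subrepresentation (i.e.\ each $\widetilde\Gamma_l$ is a linear subspace and $w_l(\widetilde\Gamma_l)\subset\widetilde\Gamma_{l+1}$), and that it is coherent, i.e.\ $h_{l,u}(a)=h_{l,v}(b)$ for every $(a,b)\in\widetilde\Gamma_l$. The subspace condition is immediate since $\widetilde\Gamma_k=\Gamma_k+\F(x,y)$ is a sum of two subspaces. For the stability of the representation, the only non-obvious inclusion is $w_k(\widetilde\Gamma_k)\subset\widetilde\Gamma_{k+1}=\Gamma_{k+1}$: but $w_k(\widetilde\Gamma_k)=w_k(\Gamma_k)+\F\,w_k(x,y)=w_k(\Gamma_k)+\F(u_k(x),v_k(y))$, and both $w_k(\Gamma_k)\subset\Gamma_{k+1}$ and $(u_k(x),v_k(y))\in\Gamma_{k+1}$ hold by hypothesis; the inclusion $w_{k-1}(\widetilde\Gamma_{k-1})=w_{k-1}(\Gamma_{k-1})\subset\Gamma_k\subset\widetilde\Gamma_k$ is trivial, and all other arrows are unchanged.

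The real content is coherence. For $l\neq k$ there is nothing to prove since $\widetilde\Gamma_l=\Gamma_l$ and $\Gamma$ is coherent. So fix an element of $\widetilde\Gamma_k$, which we write as $(a,b)+\lambda(x,y)$ with $(a,b)\in\Gamma_k$ and $\lambda\in\F$. If $\lambda=0$ we are done by coherence of $\Gamma$. If $\lambda\neq0$, after rescaling by $\lambda^{-1}$ (which changes neither of the two heights, by Lemma~\ref{lemmehauteurelem}) it suffices to treat the element $(x+a,\,y+b)$ with $(a,b)\in\Gamma_k$, i.e.\ to show $h_{k,u}(x+a)=h_{k,v}(y+b)$. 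Here I would use adaptedness: since $x$ is $k$-adapted to $\pi_1(\Gamma)_k$ and $a\in\pi_1(\Gamma)_k$, Lemma~\ref{caracadapteelem} gives
$$h_{k,u}(x+a)=\min\bigl(h_{k,u}(x),h_{k,u}(a)\bigr),$$
and symmetrically, since $y$ is $k$-adapted to $\pi_2(\Gamma)_k$ and $b\in\pi_2(\Gamma)_k$,
$$h_{k,v}(y+b)=\min\bigl(h_{k,v}(y),h_{k,v}(b)\bigr).$$
Now $h_{k,u}(x)=h_{k,v}(y)$ by hypothesis, and $h_{k,u}(a)=h_{k,v}(b)$ because $(a,b)\in\Gamma_k$ and $\Gamma$ is coherent. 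Hence the two minima are equal, which gives $h_{k,u}(x+a)=h_{k,v}(y+b)$, as desired.

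The one gap to fill is the passage from a general element $(a',b')\in\Gamma_k$ appearing in $(x+a',y+b')$ to the statement that $a'=a\in\pi_1(\Gamma)_k$ and $b'=b\in\pi_2(\Gamma)_k$ can be used \emph{in tandem} in the two invocations of Lemma~\ref{caracadapteelem} — but this is immediate: $(a,b)\in\Gamma_k$ forces $a\in\pi_1(\Gamma_k)$ and $b\in\pi_2(\Gamma_k)$, and coherence of $\Gamma$ is exactly what ties their heights together. I do not anticipate any genuine obstacle here; the lemma is designed so that the two adaptedness hypotheses and the single ``matching-height'' hypothesis $h_{k,u}(x)=h_{k,v}(y)$ propagate coherence along the one new line $\F(x,y)$. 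The mild subtlety worth stating explicitly is that adaptedness is used in the sharp form of Lemma~\ref{caracadapteelem} (equality of the height of a perturbed vector with the minimum), rather than merely the defining maximality property, since we need to compute $h_{k,u}(x+a)$ exactly, not just bound it.
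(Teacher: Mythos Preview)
Your proof is correct and follows essentially the same approach as the paper's: verify stability using $(u_k(x),v_k(y))\in\Gamma_{k+1}$, then for coherence reduce via scaling (Lemma~\ref{lemmehauteurelem}) to elements of the form $(x+a,y+b)$ with $(a,b)\in\Gamma_k$, and apply the sharp form of adaptedness (Lemma~\ref{caracadapteelem}) on each side to obtain matching minima. The paper is slightly more terse but the logical skeleton is identical.
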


Note then that $\Gamma \leq \widetilde{\Gamma}$, that $\widetilde{\Gamma}$ is finite-dimensional if $\Gamma$
is finite-dimensional, and that the first projection of $\widetilde{\Gamma}$ $k$-catches $x$.

\begin{proof}
Using the assumption that $(u_k(x),v_k(y)) \in \Gamma_{k+1}$ and that $\Gamma$ is a subrepresentation of $u \times v$,
it is easily checked that $\widetilde{\Gamma}$ is a subrepresentation of $u \times v$.
Since $\Gamma$ is already coherent, it suffices to check that
$h_{k,u}(x')=h_{k,v}(y')$ for every pair $(x',y')\in \Gamma_k + \F (x,y)$.
However, for every $(z,z')\in \Gamma_k$, we deduce from Lemma \ref{caracadapteelem}
that
$$h_{k,u}(x+z)=\min\bigl(h_{k,u}(x),h_{k,u}(z)\bigr)=\min\bigl(h_{k,v}(y),h_{k,v}(z')\bigr)=h_{k,v}(y+z').$$
The conclusion then follows by applying the second point in Lemma \ref{lemmehauteurelem}
and by using the fact that $\Gamma$ is coherent.
\end{proof}

In the next step, we consider the problem of adding a vector of $U_k$ that belongs to the kernel of $u_k$.
This is the main part of the proof that requires the coincidence of the cyclic Kaplansky invariants:

\begin{lemma}[Second Super-Elementary Extension Lemma]\label{secondsuperelemextensionlemma}
Let $\Gamma$ be a coherent finite-dimensional subrepresentation of $u \times v$, let $k \in \Z/k$, and let
$x \in U_k$ be $k$-adapted to $\pi_1(\Gamma)_k$ and such that $u_k(x)=0$.
Then there exists a finite-dimensional coherent subrepresentation $\Gamma' \geq \Gamma$
whose first projection $k$-catches $x$.
\end{lemma}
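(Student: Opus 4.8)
The plan is to reduce the statement to the Super-Elementary Extension Lemma (Lemma~\ref{superelemextensionlemma}): I will produce a vector $y\in V_k$ with $v_k(y)=0$, with $h_{k,v}(y)=h_{k,u}(x)$, and which is $k$-adapted to $\pi_2(\Gamma)_k$, and then set $\Gamma':=\Gamma+_k(x,y)$. We may assume $x\neq 0$, since otherwise $\pi_1(\Gamma)$ already $k$-catches $0$ and $\Gamma':=\Gamma$ works. As $u$ is reduced, $\alpha:=h_{k,u}(x)$ is then an ordinal, and $x\notin\pi_1(\Gamma)_k$ because a nonzero vector is never $k$-adapted to a subspace containing it.

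The first task is to transport, through $\Gamma$, the relevant finite-dimensional datum from $u$ to $v$. Put $A:=\pi_1(\Gamma)$ and $B:=\pi_2(\Gamma)$, with the convention $A_{k,\beta}=U_{k,\beta}\cap A_k$ and $B_{k,\beta}=V_{k,\beta}\cap B_k$. Since $\Gamma$ is coherent it is functional, hence it is the graph of a morphism $\varphi=(\varphi_l)_{l\in\Z/n}:u_{|A}\to v$ which restricts to an isomorphism onto $v_{|B}$ and is height-preserving for the ambient representations, i.e.\ $h_{l,v}(\varphi_l(a))=h_{l,u}(a)$ for all $l\in\Z/n$ and all $a\in A_l$. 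Because $\varphi_k$ also intertwines $u_k$ and $v_k$, it restricts to linear isomorphisms $A_{k,\alpha+1}\simeq B_{k,\alpha+1}$ and $A_{k,\alpha}\cap u_k^{-1}(A_{k+1,\alpha+2})\simeq B_{k,\alpha}\cap v_k^{-1}(B_{k+1,\alpha+2})$. Hence, via the natural injections recalled just before Lemma~\ref{caracadaptedkernel}, the subspaces
\begin{align*}
S_A&:=\frac{U_{k,\alpha+1}+\bigl(A_{k,\alpha}\cap u_k^{-1}(A_{k+1,\alpha+2})\bigr)}{U_{k,\alpha+1}}\subseteq\Ker u_{k,\alpha},\\
S_B&:=\frac{V_{k,\alpha+1}+\bigl(B_{k,\alpha}\cap v_k^{-1}(B_{k+1,\alpha+2})\bigr)}{V_{k,\alpha+1}}\subseteq\Ker v_{k,\alpha}
\end{align*}
have one and the same finite dimension $d$ (finite because $\Gamma$, hence $A$ and $B$, is finite-dimensional).

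Now comes the only step that uses the equality $\kappa_{k,\alpha}(u)=\kappa_{k,\alpha}(v)$. Since $x$ is $k$-adapted to $A_k$, with $u_k(x)=0$ and $h_{k,u}(x)=\alpha$, Lemma~\ref{caracadaptedkernel} asserts precisely that the class of $x$ in $U_{k,\alpha}/U_{k,\alpha+1}$ lies in $\Ker u_{k,\alpha}$ but not in $S_A$; hence $\kappa_{k,\alpha}(u)=\dim\Ker u_{k,\alpha}>d$ (whether finite or not). As $\kappa_{k,\alpha}(v)=\kappa_{k,\alpha}(u)>d=\dim S_B$, the subspace $S_B$ is proper in $\Ker v_{k,\alpha}$, so I may pick a class in $\Ker v_{k,\alpha}\setminus S_B$. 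Using the description $\Ker v_{k,\alpha}=\bigl[V_{k,\alpha+1}+(V_{k,\alpha}\cap\Ker v_k)\bigr]/V_{k,\alpha+1}$ (identity~\eqref{interpretinvariant} applied to $v$), choose a representative $y\in V_{k,\alpha}\cap\Ker v_k$ of such a class. Then $v_k(y)=0$; moreover $y\notin V_{k,\alpha+1}$ (else its class would be $0\in S_B$), so $h_{k,v}(y)=\alpha=h_{k,u}(x)$; and $y\notin B_k$ (else $y\in B_{k,\alpha}\cap v_k^{-1}(B_{k+1,\alpha+2})$, forcing its class into $S_B$). By Lemma~\ref{caracadaptedkernel} once more, $y$ is $k$-adapted to $B_k=\pi_2(\Gamma)_k$.

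Finally I invoke Lemma~\ref{superelemextensionlemma} with the pair $(x,y)$: indeed $(x,y)\in(U_k\times V_k)\setminus\Gamma_k$ (since $x\notin\pi_1(\Gamma)_k$), one has $h_{k,u}(x)=h_{k,v}(y)$, and $(u_k(x),v_k(y))=(0,0)\in\Gamma_{k+1}$. Thus $\Gamma':=\Gamma+_k(x,y)$ is a coherent subrepresentation of $u\times v$ with $\Gamma\leq\Gamma'$; it is finite-dimensional because $\Gamma$ is, and its first projection $k$-catches $x$, as wanted. The main obstacle is the transfer step above: one must recognize that ``the part of $\Ker u_{k,\alpha}$ already occupied by $\Gamma$'' corresponds, under the coherent graph $\Gamma$, to ``the part of $\Ker v_{k,\alpha}$ already occupied by $\Gamma$'', which is exactly why coherence---height-preservation with respect to the \emph{ambient} representations, not merely the restricted ones---is indispensable, the equality of Kaplansky invariants then supplying the room needed on the $v$-side. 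Everything else is routine bookkeeping with Lemma~\ref{caracadaptedkernel} and the interpretation~\eqref{interpretinvariant} of the cyclic Kaplansky invariants.
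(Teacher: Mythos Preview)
Your proof is correct and follows essentially the same approach as the paper's: both reduce to Lemma~\ref{superelemextensionlemma} by using Lemma~\ref{caracadaptedkernel} to see that the class of $x$ lies outside the ``occupied'' part $S_A$ of $\Ker u_{k,\alpha}$, transferring the dimension count to the $v$-side via the height-preserving isomorphism encoded by $\Gamma$, and then invoking $\kappa_{k,\alpha}(u)=\kappa_{k,\alpha}(v)$ together with identity~\eqref{interpretinvariant} to find the required $y\in V_{k,\alpha}\cap\Ker v_k$. Your handling of the trivial case (assuming $x\neq 0$, hence $x\notin A_k$ by $k$-adaptedness) is equivalent to the paper's (dispatching $x\in A_k$ directly), and your justification of the isomorphism $A_{k,\alpha}\cap u_k^{-1}(A_{k+1,\alpha+2})\simeq B_{k,\alpha}\cap v_k^{-1}(B_{k+1,\alpha+2})$ is a welcome elaboration of what the paper states in one line.
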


\begin{proof}
Put $A:=\pi_1(\Gamma)$ and $B:=\pi_2(\Gamma)$. Those are finite-dimensional subrepresentations of $u$ and $v$, respectively.
If $x \in A_k$, we simply put $\Gamma':=\Gamma$. From now on, we assume that $x \not\in A_k$.
Set $\alpha:=h_{k,u}(x)$. Since $x$ is $k$-adapted to $A_k$,
Lemma \ref{caracadaptedkernel} shows that the class of $x$ modulo $U_{k,\alpha+1}$ is outside the range of the natural linear injection
$$\bigl[A_{k,\alpha} \cap u_k^{-1}(A_{k+1,\alpha+2})\bigr]/A_{k,\alpha+1} \hookrightarrow
\bigl[U_{k,\alpha} \cap u_k^{-1}(U_{k+1,\alpha+2})\bigr]/U_{k,\alpha+1}.$$
As the source space here is finite-dimensional, it follows that
$$\dim \bigl(\bigl[A_{k,\alpha} \cap u_k^{-1}(A_{k+1,\alpha+2})\bigr]/A_{k,\alpha+1}\bigr)< \kappa_{k,\alpha}(u).$$
The $k$-th component of the morphism from $u_{|A}$ to $v$ associated with $\Gamma$ preserves heights, and hence it induces an isomorphism from
$\bigl[A_{k,\alpha} \cap u_k^{-1}(A_{k+1,\alpha+2})\bigr]/A_{k,\alpha+1}$ to
$\bigl[B_{k,\alpha} \cap v_k^{-1}(B_{k+1,\alpha+2})\bigr]/B_{k,\alpha+1}$.
Since $\kappa_{k,\alpha}(v)=\kappa_{k,\alpha}(u)$, we deduce that the linear injection
$$\bigl[B_{k,\alpha} \cap v_k^{-1}(B_{k+1,\alpha+2})\bigr]/B_{k,\alpha+1} \hookrightarrow
\bigl[V_{k,\alpha} \cap v_k^{-1}(V_{k+1,\alpha+2})\bigr]/V_{k,\alpha+1}$$
is not surjective. In other words,
$$V_{k,\alpha+1}+(B_{k,\alpha} \cap v_k^{-1}(B_{k+1,\alpha+2})) \subsetneq V_{k,\alpha} \cap v_k^{-1}(V_{k+1,\alpha+2}).$$
As we have seen earlier (formula \eqref{interpretinvariant}) that $V_{k,\alpha} \cap v_k^{-1}(V_{k+1,\alpha+2})=V_{k,\alpha+1}+(V_{k,\alpha} \cap \Ker v_k)$,
this yields a vector $y$ that belongs to $V_{k,\alpha} \cap \Ker v_k$ but not to
$V_{k,\alpha+1}+(B_{k,\alpha} \cap v_k^{-1}(B_{k+1,\alpha+2}))$.
In particular, $v_k(y)=0$ and $y$ does not belong to $B_k$ (otherwise it would belong to
$B_{k,\alpha} \cap v_k^{-1}(B_{k+1,\alpha+2})$).
Finally, $h_{k,v}(y)=\alpha$ since $y$ belongs to $V_{k,\alpha} \setminus V_{k,\alpha+1}$.
Lemma \ref{caracadaptedkernel} then yields that $y$ is $k$-adapted to $B_k$.
The conclusion then follows directly from Lemma \ref{superelemextensionlemma}.
\end{proof}

We are now ready for the proof of the Simple Extension Theorem.
Let $u=(U_k,u_k)_{k \in \Z/n}$ and $v=(V_k,v_k)_{k \in \Z/n}$ be reduced locally nilpotent linear representations of $\calC_n$
with equal cyclic Kaplansky invariants. Let $\Gamma$ be a
finite-dimensional coherent subrepresentation of $u \times v$, and let $k \in \Z/n$ and $x \in U_k$ be such that
$u_k(x) \in \Gamma_{k+1}$.
Set $A:=\pi_1(\Gamma)$ and $B:=\pi_2(\Gamma)$.
Of course, we can replace $x$ with an arbitrary vector of the affine subspace $x+A_k$
that is $k$-adapted to $A_k$ (see Corollary \ref{adaptedcor}), and hence we lose no generality in assuming:
\begin{itemize}
\item[(H1)] The vector $x$ is $k$-adapted to $A_k$.
\end{itemize}
Indeed, if we have a finite-dimensional coherent subrepresentation $\Gamma' \geq \Gamma$ whose first projection $k$-catches
some vector of $x+A_k$, then $\pi_1(\Gamma')$ also $k$-catches $x$.

We assume (H1), and next we prove that no generality is lost in assuming further that:
\begin{itemize}
\item[(H2)] The vector $u_k(x)$ is $(k+1)$-adapted to $u_k(A_{k,h_{k,u}(x)})$.
\end{itemize}
Indeed, set $\beta:=h_{k,u}(x)$ and let $x' \in x+A_{k,\beta}$ be such that $h_{k+1,u}(u_k(x'))$ is the maximal height of the
vectors of $u_k(x)+u_k(A_{k,\beta})$. Obviously
$h_{k,u}(x') \geq \beta$, and hence $h_{k,u}(x')=\beta$ by (H1).
Hence, $x'$ satisfies both (H1) and (H2); once more, if the first projection of some
finite-dimensional coherent subrepresentation $\Gamma' \geq \Gamma$ $k$-catches $x'$
then it also $k$-catches $x$.

\vskip 3mm
From now on, we assume that both (H1) and (H2) hold.
Let us define $z$ as the only vector of $B_{k+1}$ such that
$$(u_k(x),z) \in \Gamma_{k+1}.$$
In the remainder of the proof, we set
$$\alpha:=h_{k+1,u}(u_k(x))=h_{k+1,v}(z).$$
If $\alpha=\infty$, then $u_k(x)=0_{U_{k+1}}$ and the conclusion follows directly from Lemma
\ref{secondsuperelemextensionlemma}.

In the remainder of the proof, we assume that $\alpha$ is an ordinal.
We split the discussion into two cases, whether $\alpha$ is a limit ordinal or not.
To this end, we note that $\alpha>h_{k,u}(x)\geq 0$.

\vskip 3mm
\noindent \textbf{Case 1: $\alpha$ is not a limit ordinal.} \\
Assume first that $h_{k,u}(x)=\alpha-1$.
As $\Gamma$ is coherent, we see from assumption (H2) that $z$ is $(k+1)$-adapted to $v_k(B_{k,\alpha-1})$,
whereas $h_{k+1,v}(z)=\alpha$.
By Lemma \ref{lemmehauteurrelev}, there exists a vector $y \in V_k$ such that $h_{k,v}(y)=\alpha-1$ and $v_k(y)=z$.
Noting that $z \not\in v_k(B_{k,\alpha-1})$, we gather that $y \not\in B_k$.
Lemma \ref{adaptedimagelemma} then shows that $y$ is $k$-adapted to $B_k$.
Then, Lemma \ref{superelemextensionlemma}  applies to the pair $(x,y)$ and yields the desired conclusion.

Assume now that $h_{k,u}(x)<\alpha-1$.
Again, we find a vector $x' \in U_k$ such that $u_k(x')=u_k(x)$ and $h_{k,u}(x')=\alpha-1$.
This time around, Lemma \ref{adaptedimagelemma} shows that $x'$ is $k$-adapted to $A_k$.
The first situation tackled in the above yields a vector $y' \in V_k$ such that
$\Gamma +_k (x',y')$ is a coherent subrepresentation of $u\times v$ whose first projection $k$-catches $x'$.
Next, we check that $x-x'$ is $k$-adapted to $\pi_1\bigl(\Gamma +_k (x',y')\bigr)_k=A_k+\F x'$:
indeed, for all $y \in A_k$ and all $\lambda \in \F$,
$$h_{k,u}(x+y) \leq h_{k,u}(x)<h_{k,u}(x') \leq h_{k,u}\bigl((\lambda-1)x'\bigr)$$
and hence, by Lemma \ref{lemmehauteurelem},
$$h_{k,u}(x-x'+y+\lambda x')=h_{k,u}\bigl(x+y+(\lambda-1)x'\bigr)=h_{k,u}(x+y) \leq h_{k,u}(x)=h_{k,u}(x-x').$$
Finally, $u_k(x-x')=0$, and hence Lemma \ref{secondsuperelemextensionlemma} yields a finite-dimensional coherent subrepresentation
$\Gamma' \geq \Gamma +_k (x',y') \geq \Gamma$ of $u \times v$ whose first projection $k$-catches $x-x'$. Therefore, $\pi_1(\Gamma')$ $k$-catches $x=(x-x')+x'$.
This completes the study of Case 1.

\vskip 3mm
\noindent \textbf{Case 2: $\alpha$ is a limit ordinal.} \\
Since $h_{k,u}(A_k+\F x)$ is finite (see Lemma \ref{finiteheightlemma}), we can find an ordinal
$\beta<\alpha$ for which $h_{k,u}(A_k+\F x)$ is disjoint from $\left)\beta,\alpha\right($.
We claim that there exists a vector $x' \in U_k$ such that $\beta<h_{k,u}(x')<\alpha$ and $u_k(x')=u_k(x)$.
To support this, we choose an ordinal $\gamma \in \left)\beta,\alpha\right($; as $u_k(x) \in U_{k+1,\alpha} \subset U_{k+1,\gamma+1}$, we have $u_k(x)=u_k(x')$ for some $x' \in U_{k,\gamma}$. Then, $h_{k,u}(x') \geq \gamma$, but it cannot be that
$h_{k,u}(x') \geq \alpha$ otherwise $h_{k+1,u}(u_k(x))>\alpha$.
Hence, $\beta<h_{k,u}(x')<\alpha$, as claimed.

Next, we set $\gamma:=h_{k,u}(x')$.
As $h_{k+1,u}(u_k(x'))>\gamma+1$, we readily have $\kappa_{k,\gamma}(u)>0$, and hence
$\kappa_{k,\gamma}(v)>0$. By Lemma \ref{lemma:reinterpretinvariant}, this yields a vector $z' \in \Ker v_k$
such that $h_{k,v}(z')=\gamma$.

Since $h_{k+1,v}(z)=h_{k+1,u}(u_k(x))=\alpha$, the same method as in the above yields a vector
$z'' \in V_k$ such that $\gamma<h_{k,v}(z'')<\alpha$ and $v_k(z'')=z$.
Now, we set $y':=z''+z' \in V_k$, so that $h_{k,v}(y')=\gamma$ and $v_k(y')=v_k(z'')=z$.

As $\gamma$ does not belong to $h_{k,u}(A_k)$, it does not belong to $h_{k,v}(B_k)$ either (because $\Gamma$ is coherent); since
$h_{k,u}(x')=\gamma=h_{k,v}(y')$, we deduce from Lemma \ref{hauteuroutsideadapted}
that $x'$ is $k$-adapted to $A_k$ and that $y'$ is $k$-adapted to $B_k$.
Hence, Lemma \ref{superelemextensionlemma} shows that $\Gamma':=\Gamma +_k (x',y')$ is a coherent finite-dimensional subrepresentation of $u \times v$
whose first projection $k$-catches $x'$.
Observing that $u_k(x-x')=0$, we shall prove that
$x-x'$ is $k$-adapted to $\pi_1(\Gamma'_k)$. Once this is done, it will suffice to
apply Lemma \ref{secondsuperelemextensionlemma} to the triple $(\Gamma',x-x',k)$, just like in Case 1.

Firstly, $h_{k,u}(x')\not\in h_{k,u}(A_k+\F x)$, and hence Lemma \ref{hauteuroutsideadapted} shows that
$x'$ is $k$-adapted to $A_k+\F x$. Let
$\lambda \in \F \setminus \{0\}$ and $a \in A_k$. We find
\begin{align*}
h_{k,u}(x+a+\lambda x') & =h_{k,u}(\lambda^{-1}(x+a)+x') \\
& =\min\bigl(h_{k,u}(\lambda^{-1}(x+a)),h_{k,u}(x')\bigr) \\
& =\min\bigl(h_{k,u}(x+a),h_{k,u}(x')\bigr).
\end{align*}
Besides, since $x$ is $k$-adapted to $A_k$,
$$\min\bigl(h_{k,u}(x+a),h_{k,u}(x')\bigr)=\min\bigl(h_{k,u}(x),h_{k,u}(a),h_{k,u}(x')\bigr).$$
Finally, since $x'$ is $k$-adapted to $A_k+\F x$, applying the same principle once more yields
$\min\bigl(h_{k,u}(a),h_{k,u}(\lambda x')\bigr)=h_{k,u}(a+\lambda x')$.
Therefore,
$$h_{k,u}(x+a+\lambda x')=\min\bigl(h_{k,u}(x),h_{k,u}(a+\lambda x')\bigr).$$
This still holds if $\lambda=0$ because $x$ is $k$-adapted to $A_k$. We conclude that $x$ is $k$-adapted to $A_k+\F x'$.

Hence,  $h_{k,u}(x)=\max h_{k,u}\bigl(x+(A_k+\F x')\bigr)$.
As $h_{k,u}(x')>h_{k,u}(x)$, we have $h_{k,u}(x-x')=h_{k,u}(x)$. Noting that
$(x-x')+(A_k+\F x')=x+(A_k+\F x')$, we conclude that $x-x'$ is $k$-adapted to $A_k+\F x'$, which completes the proof.

Hence, the Simple Extension Theorem is proved, and Theorem \ref{extensionTHM} follows as explained in Section \ref{extensiontheoSubsection}.

\section{The possible cyclic Kaplansky invariants}\label{AdmissibleSection}

The aim of this section is to determine the possible cyclic Kaplansky invariants for a regular locally nilpotent linear representation of $\calC_n$
with countable dimension. Remember that $\omega_1$ denotes the first uncountable ordinal, i.e.\ the set of all countable ordinals.

\subsection{Main results}

\begin{Def}
Let $m=(m_{k,\alpha})_{k,\alpha}$ be a family of cardinals indexed over $(\Z/n) \times \omega_1$.
We define its \textbf{support} as
$$\SUP(m):=\bigl\{(k,\alpha) \in (\Z/n) \times \omega_1 : m_{k,\alpha}>0\bigr\},$$
and its \textbf{combined support} as the second projection of its support, i.e.\
$$\CSup(m):=\bigl\{\alpha \in \omega_1 : \; \exists k \in \Z/n : m_{k,\alpha}>0\bigr\}.$$
\end{Def}

Given ordinals $\alpha<\beta$, remember that we denote by $\left)\alpha,\beta\right($ (respectively, by $\left(\alpha,\beta\right($)
the set of all ordinals $\gamma$ such that $\alpha<\gamma<\beta$ (respectively, $\alpha \leq \gamma<\beta$).

Our first observation is that the length of a linear representation of $\calC_n$ with countable dimension is countable:

\begin{lemma}
Let $u=(U_k,u_k)_{k \in \Z/n}$ be a linear representation of $\calC_n$ with countable dimension.
Then $\ell(u)$ is countable. Consequently, $\kappa_{k,\alpha}(u)=0$ for every uncountable ordinal $\alpha$ and ever $k\in \Z/n$.
\end{lemma}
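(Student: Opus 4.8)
The plan is to derive this from the standard fact that a strictly decreasing continuous chain of subspaces of a countable-dimensional vector space has countable length, which was already announced informally in Section \ref{invariantsSection}. First I would replace the $n$ chains $(U_{k,\alpha})_\alpha$, $k\in\Z/n$, by the single continuous chain $(W_\alpha)_\alpha$ of linear subspaces of the countable-dimensional space $W:=\bigoplus_{k\in\Z/n}U_k$ given by $W_\alpha:=\bigoplus_{k\in\Z/n}U_{k,\alpha}$. This chain is non-increasing (each $(U_{k,\alpha})_\alpha$ is, as shown above) and continuous (intersections commute with a finite direct sum), and from $W_\infty=\bigoplus_k U_{k,\infty}$ one checks directly that $\ell\bigl((W_\alpha)_\alpha\bigr)=\ell(u)$: indeed $W_\alpha=W_\infty$ iff $U_{k,\alpha}=U_{k,\infty}$ for all $k$ iff $\alpha\geq\ell(u)$. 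Also $\dim W=\dim u\leq\aleph_0$, being a finite sum of countable cardinals.

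Next I would argue by contradiction, assuming $\ell(u)\geq\omega_1$. The key point is that the chain $(W_\alpha)_\alpha$ is driven by a recursion in which $W_{\alpha+1}$ depends only on $W_\alpha$: indeed $W_{\alpha+1}=\bigoplus_k u_{k-1}\bigl(U_{k-1,\alpha}\bigr)$, and $U_{k-1,\alpha}$ is recovered from $W_\alpha$ as its $U_{k-1}$-component. Consequently, if $W_{\alpha_0}=W_{\alpha_0+1}$ for some ordinal $\alpha_0$, then a transfinite induction — successor steps by the recursion, limit steps because the chain is continuous and non-increasing (so the defining intersection collapses to $W_{\alpha_0}$) — shows that $W_\beta=W_{\alpha_0}$ for all $\beta\geq\alpha_0$, whence $W_\infty=W_{\alpha_0}$ and $\ell(u)\leq\alpha_0<\omega_1$. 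Therefore, under our assumption, $W_\alpha\supsetneq W_{\alpha+1}$ for every $\alpha<\omega_1$. Picking $v_\alpha\in W_\alpha\setminus W_{\alpha+1}$ for each $\alpha<\omega_1$, the family $(v_\alpha)_{\alpha<\omega_1}$ is linearly independent: in a hypothetical nontrivial relation $\sum_{i=1}^m c_i v_{\alpha_i}=0$ with $\alpha_1<\cdots<\alpha_m$ and all $c_i\neq 0$, one would get $v_{\alpha_1}\in\Vect(v_{\alpha_2},\dots,v_{\alpha_m})\subseteq W_{\alpha_2}\subseteq W_{\alpha_1+1}$, a contradiction. This exhibits an uncountable linearly independent subset of the countable-dimensional space $W$, which is absurd. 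Hence $\ell(u)$ is countable.

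Finally, for the consequence, let $\alpha$ be an uncountable ordinal and $k\in\Z/n$. Since $\ell(u)\leq\alpha$, we have $U_{k,\alpha}=U_{k,\infty}=U_{k,\alpha+1}$, so the space $U_{k,\alpha}/U_{k,\alpha+1}$ is trivial, the map $u_{k,\alpha}$ has zero domain, and $\kappa_{k,\alpha}(u)=\dim(\Ker u_{k,\alpha})=0$. The only slightly delicate point in the whole argument is the limit-ordinal case of the transfinite induction in the second paragraph, where one must invoke both the continuity and the non-increasing character of the chain to get stabilization; everything else is routine bookkeeping.
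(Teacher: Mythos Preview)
Your proof is correct and follows essentially the same approach as the paper's: both establish that the chain is strictly decreasing below $\ell(u)$ (using that stabilization at one step forces stabilization forever after, by the recursive definition), and then pick vectors of pairwise distinct heights to produce a linearly independent family whose cardinality bounds $\ell(u)$ from below. The only cosmetic difference is that you bundle the $n$ chains into a single chain in $W=\bigoplus_k U_k$, whereas the paper works componentwise, chooses for each $\alpha<\ell(u)$ an index $i_\alpha$ and a vector $x_\alpha\in U_{i_\alpha,\alpha}\setminus U_{i_\alpha,\alpha+1}$, partitions $\{\alpha<\ell(u)\}$ by the value of $i_\alpha$, and applies Lemma~\ref{lemmeliberte} to each piece.
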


\begin{proof}
Let $\alpha$ be an ordinal such that $U_{k,\alpha}=U_{k,\alpha+1}$ for all $k \in \Z/n$.
Then by transfinite induction one finds that $U_{k,\beta}=U_{k,\beta+1}$ for every ordinal $\beta \geq \alpha$ and every $k \in \Z/n$,
and one deduces that $\alpha \geq \ell(u)$.

Hence, for every ordinal $\alpha<\ell(u)$, we can choose an index $i_\alpha \in \Z/n$ and a vector $x_\alpha \in U_{i_\alpha,\alpha} \setminus
U_{i_\alpha,\alpha+1}$. For $k \in \Z/n$, set $L_k:=\{\alpha<\ell(u) : i_\alpha=k\}$. By Lemma \ref{lemmeliberte},
the family $(x_\alpha)_{\alpha \in L_k}$ is linearly independent in $U_k$ for all $k \in \Z/n$, and hence
$L_k$ is countable. Therefore $\underset{k \in \Z/n}{\bigcup} L_k$ is countable, and finally $\ell(u)$ is countable.
\end{proof}

The case $n=1$ is covered by the following known characterization, which is generally attributed to Zippin:

\begin{prop}
Let $(m_\alpha)_{\alpha \in \omega_1}$ be a family of countable cardinals.
The following conditions are equivalent:
\begin{enumerate}[(i)]
\item There exists a reduced locally nilpotent endomorphism $u$ of a countable-dimensional vector space such that
$\kappa_{\alpha}(u,t)=m_\alpha$ for all $\alpha \in \omega_1$.
\item The support of $(m_\alpha)_{\alpha \in \omega_1}$ is countable and, if we denote by $s$ its supremum, then for
every limit ordinal $\delta \leq s$ and every ordinal $\alpha<\delta$, there is an ordinal $\beta \in \left)\alpha,\delta\right($ such that
$m_\beta>0$.
\end{enumerate}
\end{prop}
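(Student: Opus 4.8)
The plan is to prove the two implications separately. Here $(i)\Rightarrow(ii)$ is a self-contained height computation inside a single reduced endomorphism, whereas $(ii)\Rightarrow(i)$ is the substantial half, a transfinite construction in the spirit of Zippin. Throughout I view $u$ as a linear representation of the quiver $\calC_1$, so that $\kappa_\alpha(u,t)$ coincides with the cyclic invariant $\kappa_{0,\alpha}(u)$ and the reinterpretation of Lemma \ref{lemma:reinterpretinvariant} applies: $\kappa_\alpha(u,t)>0$ if and only if $\Ker u$ contains a vector of height exactly $\alpha$. I also freely use the $n=1$ instances of the height lemmas (Lemmas \ref{lemmehauteurelem} and \ref{lemmehauteurrelev}) and of the length lemma of the excerpt.

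For $(i)\Rightarrow(ii)$: countability of the support is immediate, since the length of $u$ is a countable ordinal and $\kappa_\alpha(u,t)=0$ once $\alpha$ reaches the length. For the density condition, I rephrase it as: every limit ordinal $\delta\le s$ is a limit of the support. Suppose not, and fix $\alpha_0<\delta\le s$ with $\delta$ a nonzero limit ordinal and $m_\beta=0$ for all $\beta\in\left)\alpha_0,\delta\right($. As $s$ is the supremum of the support, hence of the set of heights of socle vectors, either those heights are already cofinal in $\delta$ --- impossible by the assumed gap --- or there is a socle vector of height $\ge\delta$; let $\sigma\ge\delta$ be the least height of one such, and write $\sigma=\lambda+m$ with $\lambda$ a nonzero limit ordinal ($\lambda\ge\delta$) and $m<\omega$. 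Starting from a socle vector $a$ of height $\sigma$ and taking $u$-preimages $m$ times --- each step forced by local nilpotency to drop the height by exactly one --- one reaches a vector $b$ of height $\lambda$. Now for each $\beta<\lambda$ write $b=u(c)$ with $c$ of height $\ge\beta$; if $c$ has height $\rho<\lambda$, then $b\in p^\lambda U\subseteq p^{\rho+2}U$, so the class of $c$ in $p^\rho U/p^{\rho+1}U$ lies in $\Ker u_{\rho,t}$, whence $\kappa_\rho(u,t)>0$ with $\alpha_0<\beta\le\rho<\lambda$, contradicting the gap. Therefore $c$ has height $\ge\lambda$, so $b\in p^{\lambda+1}U$, contradicting that $b$ has height $\lambda$.

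For $(ii)\Rightarrow(i)$: I would build $u$ by transfinite recursion, presenting $U$ as the increasing union of finite-dimensional subrepresentations whose adapted bases are enlarged one stage at a time; the bookkeeping runs through the pairs $(\alpha,j)$ with $\alpha\in\omega_1$ and $1\le j\le m_\alpha$ (the socle vectors to be created), and the step handling $(\alpha,j)$ adjoins finitely many new basis vectors so as to produce, inside the growing space, a socle vector of height exactly $\alpha$ without disturbing the heights already secured. For finite $\alpha$ this merely grafts a path of length $\alpha+1$ (a copy of $\F[t]/(t^{\alpha+1})$). For $\alpha$ of the form $\lambda+m$ with $\lambda$ a nonzero limit ordinal the step is the delicate one, and it is here that the gap condition is used: it provides socle vectors already available at heights cofinal in $\lambda$, which one amalgamates into a rooted in-tree with branches of unboundedly large (finite, and recursively transfinite) depths --- essentially the classical generalized-Pr\"ufer construction --- whose root acquires height exactly $\lambda$ while staying in the socle, after which one appends $m$ further layers. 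Arranging the recursion so that no branch is ever an infinite ray (hence no saturated part appears, so reducedness is preserved at each stage) and so that the partial invariants converge to $(m_\alpha)$, one obtains a reduced, locally nilpotent, countable-dimensional $u$; additivity of the invariants under direct sums and unions (Section \ref{invariantsSection}) then yields $\kappa_\alpha(u,t)=m_\alpha$ for all $\alpha$.

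The main obstacle is the limit steps of the construction in $(ii)\Rightarrow(i)$: realizing a socle vector of a prescribed limit height $\lambda$ through a reduced (no saturated part) finite-stage amalgamation, and then carrying out the global bookkeeping so that countably many such partial configurations fuse into a single countable-dimensional reduced representation whose invariant sequence is exactly $(m_\alpha)$. The gap condition is precisely the combinatorial hypothesis that makes the limit steps feasible; $(i)\Rightarrow(ii)$, by contrast, is elementary once the descent above is set up.
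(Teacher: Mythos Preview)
The paper does not prove this proposition: it is quoted as a known result attributed to Zippin, and the paper instead establishes the general Theorem~\ref{admissibleTheo}, whose $n=1$ specialization recovers it. So the comparison is between your outline and the $n=1$ case of the paper's general machinery.

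Your $(i)\Rightarrow(ii)$ is correct and amounts to a hands-on version of Lemma~\ref{supportconstraintlemma} for $n=1$: both show that a gap below a limit ordinal forces $U_\delta=U_{\delta+1}$ (you phrase it as the lifted vector $b$ of height $\lambda$ being forced into $U_{\lambda+1}$), which is incompatible with $\delta\le s$. Two small points. The height drop under preimages is guaranteed by the lifting principle (Lemma~\ref{lemmehauteurrelev}), not by local nilpotency. And when you write ``contradicting the gap'' you are tacitly using the \emph{extended} gap $\left)\alpha_0,\sigma\right($, obtained by combining the assumed gap $\left)\alpha_0,\delta\right($ with the minimality of $\sigma$; say so explicitly.

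Your $(ii)\Rightarrow(i)$ outline is the classical Zippin/generalized-Pr\"ufer route, and this is genuinely different from what the paper does. The paper does not build the vector space directly: it first constructs a terminal representation on pointed sets with the prescribed discrete Kaplansky numbers (the Realization Theorem~\ref{realizationtheo}), by transfinite induction on the supremum of the support using augmentations (Section~\ref{augmentationSection}), pointed sums (Section~\ref{pointedsumsSection}), and the two Partitioning Lemmas~\ref{splitlemma1} and~\ref{splitlemma2}; it then linearizes via the functor $\F-\Real$. Your approach is more elementary and is the historically standard one for $n=1$; the paper's combinatorial framework buys uniformity across all $n$ and gives the Adapted Basis Theorem as an immediate byproduct. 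That said, your sketch is not yet a proof: ``arranging the recursion so that the partial invariants converge to $(m_\alpha)$'' hides essentially all the work, and Kaplansky invariants are not additive under arbitrary increasing unions. To complete this route you would need precise bookkeeping of which previously created socle vectors are absorbed at each limit amalgamation (so that the net count at each height is exactly $m_\alpha$), together with a verification that the direct limit remains reduced.
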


In the general case, the characterization is both more surprising and more complicated.
It relies upon the following observation:

\begin{lemma}\label{supportconstraintlemma}
Let $u=(U_k,u_k)_{k \in \Z/n}$ be a linear representation of $\calC_n$.
Let $\alpha$ be a limit ordinal such that $\alpha<\ell(u)$.

Let $k \in \Z/n$. Assume that there is an ordinal $\beta<\alpha$ such that $\kappa_{k,\gamma}(u)=0$
for every ordinal $\gamma \in \left(\beta,\alpha\right($.
Then $\kappa_{k+l+1,\alpha+l}(u)=0$ for every integer $l \geq 0$.
\end{lemma}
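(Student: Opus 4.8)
The plan is to reduce the whole statement, by a completely elementary induction, to the single equality $U_{k+1,\alpha}=U_{k+1,\alpha+1}$, and then to prove that base case by turning the vanishing of $\kappa_{k,\gamma}(u)$ on the interval $[\beta,\alpha)$ into a sharp control of how $u_k$ shifts heights. For the reduction, note first that for any $m\in\Z/n$ and any ordinal $\delta$, the equality $U_{m,\delta}=U_{m,\delta+1}$ forces $\kappa_{m,\delta}(u)=0$, since the source $U_{m,\delta}/U_{m,\delta+1}$ of $u_{m,\delta}$ is then $\{0\}$; and, still assuming $U_{m,\delta}=U_{m,\delta+1}$, the defining relations of the chains at successor ordinals give $U_{m+1,\delta+1}=u_m(U_{m,\delta})=u_m(U_{m,\delta+1})=U_{m+1,\delta+2}$. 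Hence, once $U_{k+1,\alpha}=U_{k+1,\alpha+1}$ is established, a straightforward induction on $l$ (applying $u_{k+l+1}$ at each step) yields $U_{k+l+1,\alpha+l}=U_{k+l+1,\alpha+l+1}$, and therefore $\kappa_{k+l+1,\alpha+l}(u)=0$, for every $l\geq 0$. So it remains only to treat the case $l=0$; the hypothesis $\alpha<\ell(u)$ will not be used.

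The core of the argument is the following \emph{height-shift property}: if $w\in U_k$ has $h_{k,u}(w)=\delta$ with $\beta\leq\delta<\alpha$, then $h_{k+1,u}(u_k(w))=\delta+1$. One inequality is automatic, since $u_k(w)\in u_k(U_{k,\delta})=U_{k+1,\delta+1}$ gives $h_{k+1,u}(u_k(w))\geq\delta+1$. For the reverse, suppose $u_k(w)\in U_{k+1,\delta+2}=u_k(U_{k,\delta+1})$, so $u_k(w)=u_k(w')$ for some $w'\in U_{k,\delta+1}$; then $w-w'\in\Ker u_k$, and since $h_{k,u}(w')\geq\delta+1>\delta=h_{k,u}(w)$, Lemma~\ref{lemmehauteurelem} yields $h_{k,u}(w-w')=\delta$. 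Thus $\Ker u_k$ would contain an element of $k$-th height $\delta\in[\beta,\alpha)$, which contradicts $\kappa_{k,\delta}(u)=0$ by the remark following Lemma~\ref{lemma:reinterpretinvariant}. Hence $h_{k+1,u}(u_k(w))=\delta+1$.

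With this in hand I would finish the case $l=0$ by showing $U_{k+1,\alpha}\subseteq U_{k+1,\alpha+1}$ (the reverse inclusion being automatic). Let $z\in U_{k+1,\alpha}$, which may be assumed nonzero (if $z=0$ there is nothing to prove). Since $\alpha$ is a limit ordinal and $\beta<\alpha$, we have $\beta+1<\alpha$, so $z\in U_{k+1,\beta+1}=u_k(U_{k,\beta})$; write $z=u_k(w)$ with $w\in U_{k,\beta}$. If $w\in U_{k,\alpha}$, then $z\in u_k(U_{k,\alpha})=U_{k+1,\alpha+1}$, as wanted. Otherwise $\delta:=h_{k,u}(w)$ is an ordinal with $\beta\leq\delta<\alpha$, and the height-shift property gives $h_{k+1,u}(z)=\delta+1$; but $\delta<\alpha$ with $\alpha$ a limit ordinal forces $\delta+1<\alpha$, contradicting $z\in U_{k+1,\alpha}$. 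This establishes the case $l=0$, and hence the lemma.

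The only genuine obstacle here is finding and proving the height-shift property, which is the single place where the coincidence of vanishing Kaplansky invariants enters; the rest is bookkeeping with the defining relations of the chains, the one point to watch being the repeated use of the closure of limit ordinals under successors of smaller ordinals ($\delta<\alpha\Rightarrow\delta+1<\alpha$).
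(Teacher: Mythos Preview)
Your proof is correct and follows essentially the same route as the paper's: both reduce to showing $U_{k+1,\alpha}=U_{k+1,\alpha+1}$, lift an element $z\in U_{k+1,\alpha}$ through $u_k$ to some $w\in U_{k,\beta}$, and argue that $h_{k,u}(w)\geq\alpha$ by deriving a contradiction with the vanishing of $\kappa_{k,\gamma}(u)$ on $[\beta,\alpha)$. The only cosmetic difference is that you isolate the ``height-shift property'' and derive the contradiction via Lemma~\ref{lemma:reinterpretinvariant} (producing an explicit kernel element $w-w'$ of height $\delta$), whereas the paper argues directly from the definition of $\kappa_{k,\gamma}$: the class of $w$ in $U_{k,\gamma}/U_{k,\gamma+1}$ is itself a nonzero element of $\Ker u_{k,\gamma}$ once $u_k(w)\in U_{k+1,\gamma+2}$.
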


\begin{proof}
To start with, we prove that $U_{k+1,\alpha}=U_{k+1,\alpha+1}$, i.e.\ $u_k(U_{k,\alpha})=U_{k+1,\alpha}$.
The inclusion $U_{k+1,\alpha+1} \subset U_{k+1,\alpha}$ is known, so we simply prove the converse one.

Let $y \in U_{k+1,\alpha}$.
Since $\alpha$ is a limit ordinal, we have $y \in U_{k+1,\beta+1}$ i.e.\ $y=u_k(x)$ for some
$x \in U_{k,\beta}$. Set $\gamma:=h_{k,u}(x)$. Then $\beta \leq \gamma$. If $\gamma<\alpha$ then,
as $h_{k+1,u}(y) \geq \alpha>\gamma+1$ we deduce that $\kappa_{k,\gamma}(u)>0$,
contradicting the assumptions. Hence, $\gamma \geq \alpha$ and we conclude that $y \in u_k(U_{k,\alpha})$.
Therefore, $U_{k+1,\alpha}=U_{k+1,\alpha+1}$.

From there, we obtain by induction that $U_{k+1+l,\alpha+l}=U_{k+1+l,\alpha+1+l}$
for every integer $l\geq 0$, and the conclusion follows immediately.
\end{proof}

\begin{Def}
Let $D$ be a subset of $(\Z/n) \times \omega_1$.
The \textbf{deficiency of $D$}, denoted by $\defi(D)$, is defined as the set of all pairs $(k,\delta) \in  (\Z/n) \times \omega_1$
in which $\delta$ is a limit ordinal and there exists
an ordinal $\alpha<\delta$
such that $\forall \beta \in \left)\alpha,\delta\right(, \; (k,\beta)\not\in D$.

We say that $D$ is \textbf{admissible} when it is countable and
$$\forall (k,\delta) \in \defi(D), \;
\forall l \in \N, \; (k+1+l,\delta+l) \not\in D.$$

A family of countable cardinals indexed over $(\Z/n) \times \omega_1$ is called \textbf{admissible}
whenever its support is admissible.
\end{Def}

\begin{Rem}
Let $\delta$ be a limit ordinal and let $k \in \Z/n$ be such that
$(k,\delta)\not\in \defi(D)$.
Let $\alpha \in D$ be such that $\alpha<\delta$. Then there is an ordinal $\beta \in \left)\alpha,\delta\right($ such that $(k,\beta)\in D$.
The set of all such ordinals $\beta$ is not finite, otherwise by taking the greatest such ordinal $\alpha'$ we would find no such ordinal in
$\left)\alpha',\delta\right($. Hence, there are infinitely many ordinals $\beta \in \left)\alpha,\delta\right($ such that $(k,\beta)\in D$.
\end{Rem}

Remember that the length of a linear representation of $\calC_n$ with countable dimension is countable.
Hence, using Lemma \ref{supportconstraintlemma}, we obtain the implication (i) $\Rightarrow$ (ii) in the following theorem, which gives a complete
characterization of the possible cyclic Kaplansky invariants.

\begin{theo}\label{admissibleTheo}
Let $(m_{k,\alpha})_{k \in \Z/n,\alpha\in \omega_1}$ be a family of countable cardinals.
The following conditions are equivalent:
\begin{enumerate}[(i)]
\item There exists a reduced locally nilpotent linear representation $u$ of $\calC_n$ with countable dimension such that
$\kappa_{k,\alpha}(u)=m_{k,\alpha}$ for all $(k,\alpha) \in (\Z/n) \times \omega_1$.
\item The family $(m_{k,\alpha})_{k \in \Z/n,\alpha\in \omega_1}$ is admissible.
\end{enumerate}
\end{theo}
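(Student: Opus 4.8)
The plan is to prove the nontrivial implication (ii) $\Rightarrow$ (i); the converse (i) $\Rightarrow$ (ii) is already in hand, being obtained from Lemma \ref{supportconstraintlemma} together with the countability of the length of a countable-dimensional representation. So let $(m_{k,\alpha})_{k \in \Z/n, \alpha \in \omega_1}$ be an admissible family of countable cardinals, and put $D:=\SUP(m)$. The representation realizing $m$ will be constructed as a direct sum of explicitly described \emph{cells}; since the cyclic Kaplansky invariants are additive with respect to direct sums (as recalled in Section \ref{invariantsSection}), it suffices to split $D$, with multiplicities, into countably many pieces, each realized by one cell.

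The easy ingredient handles the finite part $D \cap \bigl((\Z/n) \times \N\bigr)$: for each $k \in \Z/n$ and each integer $N \geq 0$, the finite analogue of an infinite Jordan cycle-cell of base $k$ (obtained by keeping only the basis vectors $e_0,\dots,e_N$) is a reduced locally nilpotent representation of finite dimension whose socle is one-dimensional, spanned by a vector of $k$-th height $N$, so that (by Lemma \ref{lemma:reinterpretinvariant}) its unique nonzero invariant is $\kappa_{k,N}=1$. Taking the direct sum of $m_{k,N}$ copies of these, over all $(k,N) \in D$ with $N < \omega$, realizes the finite part of $D$ exactly. The core of the proof is the construction, for each infinite ordinal $\delta \in \CSup(m)$ and each relevant position $(k_0,\delta) \in D$, of a \emph{generalized Jordan cycle-cell} realizing a nonzero invariant at $(k_0,\delta)$. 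Following the classical Pr\"ufer-type construction, one fixes a strictly increasing sequence $(\delta_i)_{i \in \N}$ cofinal in $\delta$, together with a choice of bases $k_i \in \Z/n$ made compatible with the arithmetic of $\delta$, and glues onto a common socle vector $e \in U_{k_0}$ a family of finite-depth chains, of depths tending to infinity, which spiral around the cyclic quiver, the $i$-th chain being built so that its terms run through the heights below $\delta_i$. One then checks --- using Lemmas \ref{lemmehauteurrelev}, \ref{lemmeliberte}, \ref{lemmehauteurelem} and the recursive description of the height filtration --- that the result is reduced, locally nilpotent, of countable dimension, has $\kappa_{k_0,\delta}=1$, and has all of its remaining nonzero invariants at a \emph{prescribed} cofinal set of positions below $\delta$, in columns dictated by the routing. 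This ``collateral'' support cannot be avoided, precisely because a reduced cell with a socle vector of infinite height $\delta$ must, by Lemma \ref{supportconstraintlemma}, carry invariants cofinally below $\delta$ in that column; the cell at a successor ordinal is obtained from the one at its predecessor by prepending a single arrow, and the general case is a transfinite recursion on $\delta$.

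It then remains to assemble the global decomposition: one partitions $D$ into its finite part and countably many ``limit blocks'', each consisting of a distinguished pair $(k_0,\delta)$ with $\delta$ infinite together with the collateral positions produced by its generalized cell, and one arranges, using the admissibility of $D$, that the prescribed multiplicities add up correctly. The diagonal condition in the definition of admissibility --- that $(k,\delta) \in \defi(D)$ forces $(k+1+l,\delta+l) \notin D$ for all $l \in \N$ --- is exactly what makes this possible: whenever $D$ itself leaves a gap in some column just below a limit ordinal $\delta$, so that a cell producing an invariant at $(k,\delta)$ must be fed from the \emph{other} columns of the cyclic quiver, the positions $(k+1+l,\delta+l)$ are where the spiralling chains would otherwise deposit surplus invariants, and admissibility guarantees these lie outside $D$. \textbf{The main obstacle} is precisely this simultaneous bookkeeping: choosing the cofinal sequences, the routings, and the cell multiplicities so that the unavoidable collateral invariants of all the generalized cells sum --- with no surplus --- to exactly $m_{k,\alpha}$ at every position, reconciling the finite-versus-$\aleph_0$ distinction among the values $m_{k,\alpha}$ with the combinatorics of $\defi(D)$. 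Everything else is a routine, if somewhat lengthy, verification of heights in explicitly given representations.
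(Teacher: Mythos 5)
Your strategy for (ii) $\Rightarrow$ (i) --- realize the finite part of the support by finite Jordan cycle-cells, realize each infinite-height point by a Pr\"ufer-type cell built by transfinite recursion (successor steps by prepending one arrow, limit steps by gluing chains of cofinally increasing depth onto a common socle vector), and then take a direct sum using additivity of the invariants --- is in substance the same strategy as the paper's, which implements the cells as linear realizations of terminal representations on pointed sets, the ``prepending'' as the augmentation $f^{+l}$, and the gluing as the pointed sum. The difficulty is that your proposal stops exactly at the point where the real work begins: the step you yourself label ``the main obstacle'' (choosing the cofinal sequences, routings and multiplicities so that the unavoidable collateral invariants of all the cells sum, with no surplus, to $m_{k,\alpha}$ at every position) is not a routine verification that can be waved through, and no argument for it is given. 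Admissibility of $D:=\SUP(m)$ does not obviously yield the required decomposition.

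Concretely, what is missing is the combinatorial core of the paper's proof of the Realization Theorem (Theorem \ref{realizationtheo}), namely the two Partitioning Lemmas. If $\delta$ is a limit ordinal and the total multiplicity $M=\sum_{k,p} m_{k,\delta+p}$ at heights $\geq\delta$ is infinite, then infinitely many distinct cells must each reach height $\delta$ in some column, and each of them needs its \emph{own} admissible, cofinal-below-$\delta$ block of collateral support carved out of $D$, in the correct column, without double-counting; the First Partitioning Lemma (Lemma \ref{splitlemma1}) is precisely the statement that an admissible set whose combined support has no greatest element splits into countably many admissible pieces with the same supremum \emph{and the same deficiency}, and its proof requires a nontrivial ordinal-arithmetic construction. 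When the supremum of $\CSup(m)$ is not attained, one further needs the Second Partitioning Lemma (Lemma \ref{splitlemma2}), producing a ``good partition'' into admissible pieces whose combined supports attain their suprema in prescribed columns, so that the transfinite induction on $\Ubd(\SUP(m))$ can close; the paper's induction also has to separate the cases $M$ finite versus $M$ infinite and supremum attained versus not attained. Until you supply these partition results (or an equivalent device) and run the induction, the claim that ``one arranges, using the admissibility of $D$, that the prescribed multiplicities add up correctly'' is an assertion of the theorem's hardest content rather than a proof of it.
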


The difficulty here is to prove that condition (ii) implies condition (i).

The proof of Theorem \ref{admissibleTheo} is spread over the next four sections.
The main key is a result called the Realization Theorem, which is stated in the end of Section \ref{terminalSection} and
proved over the course of the next three sections.
As an application of the Realization Theorem, we will prove the Adapted Basis Theorem in Section \ref{adaptedProofSection}.

\subsection{Terminal representations of $\calC_n$}\label{terminalSection}

Here, we consider the category $\PSet$ of pointed sets: its objects are the pairs
$(X,o)$ consisting of a set $X$ and of an element $o$ of $X$. A morphism from an object $(X,o)$ to an object $(Y,o')$
is a mapping $\varphi : X \rightarrow Y$ such that $\varphi(o)=o'$. The composition of morphisms is the one of the associated mappings.
We have a forgetful functor $\PSet \rightarrow \Set$ that takes the object $(X,o)$ to $X$, and the morphism
$\varphi : (X,o) \rightarrow (Y,o')$ to the underlying mapping $\varphi : X \rightarrow Y$.

Let $f \in \Rep(\calC_n,\PSet)$. We can view $f$ as a family $(X_k,o_k,f_k)_{k \in \Z/n}$.
Remember that given $k \in \Z/n$ and $x \in X_k$, we denote by $x^{(k,f)}$ the sequence associated with $x$ for the pair $(k,f)$.

We say that $f$ is a \textbf{terminal representation} of $\calC_n$ when it satisfies the following two conditions:
\begin{itemize}
\item[(i)] One has $X_{k,\infty}=\{o_k\}$ for all $k \in \Z/n$.
\item[(ii)] For all $k \in \Z/n$ and all $x \in X_k$, the sequence $x^{(k,f)}$ is ultimately $n$-periodic.
\end{itemize}

Assume that condition (i) holds. Then we immediately show that (ii) is equivalent to the following condition:
\begin{itemize}
\item[(iii)] For every $k \in \Z/n$ and all $x \in X_k$, one has $x^{(k,f)}_{ni}=o_k$ for some integer $i \geq 0$.
\end{itemize}
First, it is clear that (iii) implies (ii) because $f_k(o_k)=o_{k+1}$ for all $k \in \Z/n$.
Conversely, assume that (ii) holds. Let $k \in \Z/n$ and $x \in X_k$.
Then there is an index $i$ such that $x^{(k,f)}_{ni}=x^{(k,f)}_{n(i+1)}$.
Using the sequence $(x^{(k,f)}_{ni},x^{(k,f)}_{ni+1},\dots,x^{(k,f)}_{ni+n-1})$, we find by transfinite induction that
$x^{(k,f)}_{ni+l}$ belongs to $X_{k+l,\alpha}$ for every ordinal $\alpha$ and every integer $l \in \lcro 0,n-1\rcro$, and hence
condition (i) shows in particular that $x^{(k,f)}_{ni}=o_k$.

\vskip 3mm
Denote by $\F-\LinCat$ the category of vector spaces over $\F$.
We define a functor $\F-\Real : \PSet \rightarrow \F-\LinCat$ as follows:
for every pointed set $(X,o)$, its image under $\F-\Real$ is the vector space $\F^{(X \setminus \{o\})}$
of all families of scalars of $\F$ indexed over $X \setminus \{o\}$ and with finite support, we
denote by $(e_x)_{x \in X \setminus \{o\}}$ its standard basis (so that $(e_x)_y=\delta_{x,y}$ for all $(x,y) \in (X \setminus \{o\})^2$)
and we convene that $e_o$ is the zero vector of $\F^{(X \setminus \{o\})}$.
Then, for every morphism $f : (X,o) \rightarrow (Y,o')$ in $\PSet$, we define
its image under $\F-\Real$ as the sole linear map $\widetilde{f}$ from $\F^{(X \setminus \{o\})}$
to $\F^{(Y \setminus \{o'\})}$ such that $\widetilde{f}(e_x)=e_{f(x)}$ for all $x \in X \setminus \{o\}$.
Note that $\widetilde{f}(e_x)=e_{f(x)}$ for all $x \in X$.
One checks that the above correctly defines a functor $\F-\Real$ from the category of pointed sets to the one of $\F$-vector spaces.
Given a representation $f$ of $\calC_n$ on the category $\PSet$, we denote by $\widetilde{f}$ the linear representation of $\calC_n$
obtained by composing $f$ with the previous functor. We say that $\widetilde{f}$ is the \textbf{linear realization} of $f$
(over $\F$).

Now, let $f=(X_k,o_k,f_k)_{k \in \Z/n}$ be a terminal representation of $\calC_n$, and denote by
$\widetilde{f}=(U_k,\widetilde{f_k})_{k \in \Z/n}$ its linear realization.
By transfinite induction, one checks that
$$\forall \alpha, \;\forall k \in \Z/n, \; U_{k,\alpha}=\Vect(X_{k,\alpha})$$
and it follows that
$$\forall k \in \Z/n, \; U_{k,\infty}=\Vect(X_{k,\infty}).$$
Using condition (i), we deduce from the latter equality that $\widetilde{f}$ is reduced. Next, let $k \in \Z/n$. For all $x \in X_k$,
we know that $x^{(k,f)}_{ni}=o_k$ for some integer $i \geq 0$, and hence
$(e_x)^{(k,\widetilde{f})}_{ni}=e_{o_k}=0_{U_k}$. Since $(e_x)_{x \in X_k}$ generates $U_k$, it easily follows that
for all $y \in U_k$ there exists an integer $j \geq 0$ such that $y^{(k,\widetilde{f})}_{nj}=0_{U_k}$, and we
conclude that $\widetilde{f}$ is locally nilpotent.

Next, we show how the cyclic Kaplansky invariants of $\widetilde{f}$ can be derived from discrete invariants that are attached to $f$ itself.
For every ordinal $\alpha$ and every $k \in \Z/n$, we set
\begin{multline*}
n_{k,\alpha}(f):=\card\bigl\{x \in X_k : \; h_{k,f}(x)=\alpha \; \textrm{and} \; h_{k+1,f}(f_k(x)) >\alpha+1\bigr\} \\
+\sum_{y \in h_{k+1,f}^{-1}\{\alpha+1\}} \bigl(\card(f_k^{-1}\{y\} \cap h_{k,f}^{-1}\{\alpha\})-1\bigr),
\end{multline*}
where the sum is to be understood as a sum of potentially infinite cardinals, and where, for every
non-zero cardinal $\kappa$, we denote by $\kappa-1$ its predecessor if $\kappa$ is finite, otherwise $\kappa-1:=\kappa$
(so that, for any set $E$ with cardinality $\kappa$ and any element $x$ of $E$,
the set $E \setminus \{x\}$ has cardinality $\kappa-1$).
Note that $n_{k,\alpha}(f)=0$ for every ordinal $\alpha\geq \ell(f)$ and every $k \in \Z/n$.
Note also that if $X_{k,\alpha}$ is finite then
$$n_{k,\alpha}(f)=\card\bigl(h_{k,f}^{-1}\{\alpha\}\bigr)-\card\bigl(h_{k+1,f}^{-1}\{\alpha+1\}\bigr).$$
More generally, if we choose a subset $Z$ of $h_{k,f}^{-1}\{\alpha\}$ which $f_k$ maps bijectively onto
$h_{k+1,f}^{-1}\{\alpha+1\}$ (Lemma \ref{lemmehauteurrelev} proves the existence of $Z$), the following equality holds:
\begin{equation}\label{discretenumberseq}
n_{k,\alpha}(f)=\card\bigl(h_{k,f}^{-1}\{\alpha\}\setminus Z\bigr).
\end{equation}
Finally, note that if $\ell(f)$ has a predecessor, then
$n_{k,\ell(f)-1}(f)$ is the cardinality of the set of all $x \in X_k$ such that $h_{k,f}(x)=\ell(f)-1$.

We say that $n_{k,\alpha}(f)$ is the \textbf{discrete Kaplansky number} of $f$ with respect to the pair $(k,\alpha)$.
Those numbers are connected to the cyclic Kaplansky invariants of $\widetilde{f}$ as follows:

\begin{lemma}\label{realizationlemma}
For every $k\in \Z/n$ and every ordinal $\alpha$, one has
$$\kappa_{k,\alpha}(\widetilde{f})=n_{k,\alpha}(f).$$
\end{lemma}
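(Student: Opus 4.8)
The plan is to compute $\kappa_{k,\alpha}(\widetilde f)$ via its description as $\dim\Ker u_{k,\alpha}$, where $u_{k,\alpha}\colon U_{k,\alpha}/U_{k,\alpha+1}\to U_{k+1,\alpha+1}/U_{k+1,\alpha+2}$ is the surjection induced by $\widetilde{f_k}$, making systematic use of the identity $U_{k,\alpha}=\Vect(X_{k,\alpha})$ established above. Fix $k\in\Z/n$ and an ordinal $\alpha$, and set $S:=h_{k,f}^{-1}\{\alpha\}$ and $T:=h_{k+1,f}^{-1}\{\alpha+1\}$; note $o_k\notin S$ and $o_{k+1}\notin T$, as those points have infinite height. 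Since $U_{k,\alpha}=\Vect(X_{k,\alpha})$, $U_{k,\alpha+1}=\Vect(X_{k,\alpha+1})$ and $X_{k,\alpha+1}\subseteq X_{k,\alpha}$, the classes of the standard basis vectors $e_x$ with $x\in S$ form a basis of $U_{k,\alpha}/U_{k,\alpha+1}$, and likewise the classes of $e_y$ with $y\in T$ form a basis of $U_{k+1,\alpha+1}/U_{k+1,\alpha+2}$. Because $X_{k+1,\alpha+1}=f_k(X_{k,\alpha})$, every $x\in S$ satisfies $h_{k+1,f}(f_k(x))\geq\alpha+1$; since $\widetilde{f_k}(e_x)=e_{f_k(x)}$, the map $u_{k,\alpha}$ sends the class of $e_x$ to $0$ when $h_{k+1,f}(f_k(x))>\alpha+1$ and to the basis vector indexed by $f_k(x)$ when $h_{k+1,f}(f_k(x))=\alpha+1$. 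So, on these bases, $u_{k,\alpha}$ is encoded by the partially-defined assignment sending $x\in S$ to $f_k(x)\in T$ whenever $h_{k+1,f}(f_k(x))=\alpha+1$.

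Next I would invoke formula \eqref{discretenumberseq}: choose a subset $Z\subseteq S$ mapped bijectively by $f_k$ onto $T$ (its existence follows from Lemma \ref{lemmehauteurrelev}), and for $y\in T$ let $z_y\in Z$ be the unique preimage of $y$. For each $x\in S\setminus Z$ set $v_x:=e_x$ if $h_{k+1,f}(f_k(x))>\alpha+1$, and $v_x:=e_x-e_{z_{f_k(x)}}$ if $h_{k+1,f}(f_k(x))=\alpha+1$; in either case the class $\overline{v_x}$ of $v_x$ in $U_{k,\alpha}/U_{k,\alpha+1}$ lies in $\Ker u_{k,\alpha}$. The family $(\overline{v_x})_{x\in S\setminus Z}$ together with $(\overline{e_z})_{z\in Z}$ is obtained from the basis $(\overline{e_x})_{x\in S}$ by an invertible, block-triangular change of coordinates, hence is itself a basis of $U_{k,\alpha}/U_{k,\alpha+1}$. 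Since $u_{k,\alpha}$ is surjective, annihilates every $\overline{v_x}$, and maps $(\overline{e_z})_{z\in Z}$ bijectively onto the basis $(\overline{e_y})_{y\in T}$ of the target (as $f_k$ restricts to a bijection $Z\to T$), a short argument gives $\Ker u_{k,\alpha}=\Vect(\overline{v_x}:x\in S\setminus Z)$. Therefore $\kappa_{k,\alpha}(\widetilde f)=\card(S\setminus Z)=n_{k,\alpha}(f)$, the last equality being \eqref{discretenumberseq}.

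The argument is essentially bookkeeping, with no deep point. The step deserving the most care is the verification that $(\overline{v_x})_{x\in S\setminus Z}\cup(\overline{e_z})_{z\in Z}$ is a basis and that $\Ker u_{k,\alpha}$ is spanned exactly by the $\overline{v_x}$: this is a routine triangular-change-of-basis and kernel-complement computation, but it must be phrased so as to remain valid when $S$ and $T$ are infinite. One should also note that the degenerate case $\alpha\geq\ell(f)$ — where $S=T=\emptyset$, the quotient $U_{k,\alpha}/U_{k,\alpha+1}$ is zero, and both $\kappa_{k,\alpha}(\widetilde f)$ and $n_{k,\alpha}(f)$ vanish — is already subsumed in the general argument (with $Z=\emptyset$), so no separate treatment is needed; the same remark applies when $X_{k,\alpha}=X_{k,\alpha+1}$ for the given $k$ without $\alpha$ exceeding the length.
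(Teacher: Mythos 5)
Your proof is correct and follows essentially the same route as the paper's: both choose, via the lifting principle, a subset $Z\subseteq h_{k,f}^{-1}\{\alpha\}$ mapped bijectively by $f_k$ onto $h_{k+1,f}^{-1}\{\alpha+1\}$, identify $\Ker u_{k,\alpha}$ with a complement of $\Vect(\overline{e_z}:z\in Z)$ in $U_{k,\alpha}/U_{k,\alpha+1}$, and conclude with formula \eqref{discretenumberseq}. The only cosmetic difference is that you exhibit an explicit basis $(\overline{v_x})_{x\in S\setminus Z}$ of the kernel, where the paper just counts the dimension of a complementary subspace.
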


\begin{proof}
Let $\alpha$ be an ordinal, and let $k \in \Z/n$.
For all $y \in X_{k+1}$ with $h_{k+1,f}(y)=\alpha+1$, we choose $z(y) \in X_k$ such that $f_k(z(y))=y$
and $h_{k,f}(z(y))=\alpha$ (see Lemma \ref{lemmehauteurrelev}).
Besides, a basis of the quotient space
$U_{k,\alpha}/U_{k,\alpha+1}$ is given by the classes of the vectors of type
$e_x$ where $x \in X_k$ satisfies $h_{k,f}(x)=\alpha$.
Denote by $V$ the linear subspace of $U_{k,\alpha}/U_{k,\alpha+1}$
spanned by the classes of vectors of the form $e_{z(y)}$.
Obviously, $f_k$ induces an isomorphism from $V$ to $U_{k+1,\alpha+1}/U_{k+1,\alpha+2}$,
and hence $\kappa_{k,\alpha}(\widetilde{f})$ equals the dimension of any subspace that is complementary to
$V$ in $U_{k,\alpha}/U_{k,\alpha+1}$.
One of those subspaces is the one, denoted by $W$, spanned by the classes of the vectors of the form
$e_x$, in which $x \in X_k$ satisfies $h_{k,f}(x)=\alpha$ and is distinct from all the $z(y)$ vectors.
Hence, by formula \eqref{discretenumberseq},
$$\kappa_{k,\alpha}(\widetilde{f})=\dim W=n_{k,\alpha}(f).$$
\end{proof}

We say that $f$ is \textbf{countable} when all the sets $X_k$, for $k \in \Z/n$, are countable.
Note that this is equivalent to having $\widetilde{f}$ of countable dimension.
The implication (ii) $\Rightarrow$ (i) in Theorem \ref{admissibleTheo} will thus come from the following result:

\begin{theo}[Realization Theorem]\label{realizationtheo}
Let $(m_{k,\alpha})_{(k,\alpha) \in (\Z/n) \times \omega_1}$ be an admissible family of cardinals. Then
there exists a countable terminal representation $f$ of $\calC_n$ such that
$n_{k,\alpha}(f)=m_{k,\alpha}$ for every $k \in \Z/n$ and every ordinal $\alpha\in \omega_1$.
\end{theo}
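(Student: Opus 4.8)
The plan is to build the desired representation explicitly as a countable wedge (i.e.\ coproduct in $\PSet$) of simpler terminal representations. The first thing to record is that this operation behaves well: by transfinite induction one checks, exactly as for the chains $U_{k,\alpha}$ earlier in the paper, that in a wedge $\bigvee_i f^{(i)}$ (base points identified) the chain $X_{k,\alpha}$ is the union of the chains of the summands glued along the common base point, so that heights are computed component-wise, the wedge is again terminal and countable as soon as the summands are, and the discrete Kaplansky numbers add up: $n_{k,\alpha}\bigl(\bigvee_i f^{(i)}\bigr)=\sum_i n_{k,\alpha}(f^{(i)})$. Since any countable cardinal is a countable sum of $1$'s, and since any family with support contained in an admissible set is admissible, $m$ splits into a countable sum of admissible families taking only the values $0$ and $1$; hence it suffices to \emph{realize an admissible set} $D\subseteq(\Z/n)\times\omega_1$, that is, to produce a countable terminal $f$ with $n_{k,\alpha}(f)=1$ for $(k,\alpha)\in D$ and $n_{k,\alpha}(f)=0$ otherwise.

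Next I would construct, by transfinite recursion on $\alpha$, a family of ``cells'' $B(k,\alpha)$ --- one elementary terminal representation for each point of $D$ --- each admitting a distinguished \emph{root}: a node at slot $k$, of height $\alpha$, whose image is the base point. When $\alpha$ is $0$ or a successor, the cell is a single finite $f$-path terminating at the root, arranged so that the root is the only node that is ``excess''; such a cell realizes the value $1$ at $(k,\alpha)$ and $0$ everywhere else. When $\alpha$ is a limit ordinal, the root must receive preimages of heights cofinal in $\alpha$; the decisive observation --- the converse direction of Lemma \ref{supportconstraintlemma} --- is that admissibility forces the set $\{\beta<\alpha:(k-1,\beta)\in D\}$ to be cofinal in $\alpha$ (indeed, by the remark following the definition of admissibility, it contains infinitely many elements above any prescribed ordinal $<\alpha$), so one may choose the preimage-heights inside $D$, attach below the root a copy of $B(k-1,\beta)$ for each chosen $\beta$ (a legitimate recursive call since $\beta<\alpha$), and glue the root of that copy onto the root of $B(k,\alpha)$. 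The preimages of the root are then necessarily excess, but precisely at points of $D$; unwinding the recursion shows that $B(k,\alpha)$ realizes the value $1$ on a subset $T(k,\alpha)\subseteq D$ with $(k,\alpha)\in T(k,\alpha)$ and $\alpha$ the largest ordinal occurring in it.

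The final and main step is to assemble the actual representation by choosing a suitable set of cells whose wedge has discrete Kaplansky numbers equal to $m$; I would do this by a transfinite process sweeping $D$ from the top ordinal downward, maintaining at each stage a record of which points are still uncovered, and at each step adjoining a wedge summand $B(k,\alpha)$ that covers the current topmost uncovered point. The hard part --- and the true obstacle --- is the bookkeeping that makes this a \emph{tiling}: handling a limit point $(k,\alpha)$ forces one to consume points $(k-1,\beta)$ and, recursively, points further down, and because the quiver is a cycle these forced contributions may wrap around and threaten to cover some point more than once or to spill outside $D$; one must therefore choose the cofinal sets of preimage-heights coherently across the whole construction, using admissibility (and the strengthening provided by the remark following its definition) at each limit ordinal to guarantee enough room. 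Once a suitable wedge $f$ is obtained, it remains to verify that every node has the height it was designed to have (an induction from the leaves, using $h(x)=\sup\{h(x')+1:f_{k-1}(x')=x\}$), that $f$ satisfies conditions (i) and (ii) in the definition of a terminal representation (every node sits at finite $f$-distance from a base point, which is built into every cell), and that consequently $n_{k,\alpha}(f)=m_{k,\alpha}$ for all $(k,\alpha)$ by additivity from the cell-wise values; these verifications, together with the cardinality count, complete the proof.
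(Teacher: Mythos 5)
Your overall architecture (wedge together elementary terminal representations, use additivity of the discrete Kaplansky numbers, and invoke admissibility to get cofinally many points of $D$ below each limit ordinal) is the right one, but two steps do not survive scrutiny. First, your reduction to $\{0,1\}$-valued families rests on the claim that any family whose support is contained in an admissible set is admissible. This is false: subsets of admissible sets need not be admissible, because shrinking $D$ enlarges $\defi(D)$. For instance with $n=1$, the set $D=\{(\overline{0},m): m<\omega\}\cup\{(\overline{0},\omega)\}$ is admissible, but its subset $\{(\overline{0},\omega)\}$ is not (for $n=1$ the admissibility condition forces $(k,\delta)\notin D$ whenever $(k,\delta)\in\defi(D)$). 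So splitting a multiplicity $m_{k,\alpha}\geq 2$ into several indicator families, or peeling off $1_D$ from $m$, can produce non-admissible summands, and the induction cannot restart on them. The correct substitute is to partition $D$ into countably many pieces \emph{each having the same deficiency and the same supremum as $D$}; this is exactly the content of the paper's First Partitioning Lemma, and it is not free.

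Second, and more seriously, the step you yourself label ``the hard part and the true obstacle'' --- choosing the cofinal families of sub-cells coherently so that every forced contribution lands in $D$, no point of $D$ is covered more times than its multiplicity allows, and every point is eventually covered --- is precisely the theorem, and you have not proved it. A cell $B(k,\alpha)$ at a limit ordinal necessarily charges $1$ to $n_{k-1,\beta}$ for each attached $\beta$, and recursively further down; when several cells are wedged these charges accumulate, which is fatal wherever $m_{j,\beta}$ is finite. The paper resolves this by an induction on $\Ubd(D)$ combined with a second, stronger partitioning statement (``good partitions'', in which each piece has a greatest element of its combined support placed so as to feed the limit stages), together with the augmentation construction $f^{+l}$ to climb through $\delta,\delta+1,\dots,\delta+N$ when the supremum is attained. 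Relatedly, your description of the successor-ordinal cells as ``a single finite $f$-path'' only works for finite ordinals: a root of height $\delta+i$ with $\delta$ an infinite limit must sit atop a chain of $i$ unique lifts whose bottom is itself a limit-type node, so the successor case must recursively invoke the limit case (and admissibility in the precise form $(k-i-1,\delta)\notin\defi(D)$ is what makes that bottom node constructible inside $D$). As it stands the proposal identifies the correct ingredients but leaves the combinatorial core unestablished.
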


The next three sections are devoted to the proof of Theorem \ref{realizationtheo}.
First of all, we will explain two ways of constructing terminal representations from existing ones, and we will
understand how to compute the associated discrete Kaplansky numbers (Sections \ref{augmentationSection}
and \ref{pointedsumsSection}). The proof of Theorem \ref{realizationtheo} is carried out in Section \ref{proofrealizationtheoSection}.

\subsection{Augmentations of a terminal representation of $\calC_n$}\label{augmentationSection}

Let $f=(X_k,o_k,f_k)_{k \in \Z/n}$ be a terminal representation of $\calC_n$.
Denote by $D$ the support of $(n_{k,\alpha}(f))_{(k,\alpha) \in (\Z/n) \times \omega_1}$.

Given $l \in \Z/n$, we define the following property:

\begin{itemize}
\item[$(\Aprop_l)$] For every ordinal $\alpha <\ell(f)$, there exists
$x \in X_{l-1,\alpha} \setminus \{o_{l-1}\}$ such that $f_{l-1}(x)=o_l$.
\end{itemize}

\begin{Rem}\label{propAremark}
Property $(\Aprop_l)$ holds whenever $\ell(f)$ has a predecessor and $n_{l-1,\ell(f)-1}(f)>0$.
Assume indeed that $\ell(f)$ has a predecessor and that property $(\Aprop_l)$ fails. Then,
as $f_{l-1}(X_{l-1,\ell(f)-1})=X_{l,\ell(f)}=\{o_l\}$, we must have $X_{l-1,\ell(f)-1}=\{o_{l-1}\}$,
leading to $n_{l-1,\ell(f)-1}(f)=0$.
\end{Rem}

Assume now that property $(\Aprop_l)$ is satisfied.
Denote by $\gamma$ the least ordinal that does not belong to $X_l$ (this is a convenient way to choose an element outside of $X_l$).
Set $X_k^{+l}:=X_k$ and $o_k^{+l}:=o_k$ for all $k \in \Z/n \setminus \{l\}$, and
$X_l^{+l}:=X_l \cup \{\gamma\}$ and $o_l^{+l}:=\gamma$.

Set also $f_k^{+l}:=f_k$ for all $k \in \Z/n \setminus \{l-1,l\}$, and define
$f_{l-1}^{+l}$ and $f_l^{+l}$ as follows:
$$f_{l-1}^{+l}(o_{l-1})=\gamma \quad \text{and} \quad \forall y \in X_{l-1} \setminus \{o_{l-1}\}, \; f_{l-1}^{+l}(y)=f_{l-1}(y).$$
$$f_l^{+l}(\gamma)=o_{l+1}^+ \quad \text{and} \quad \forall y \in X_l, \; f_l^{+l}(y)=f_l(y)$$
(note that there is no confusion between these definitions if $n=1$).

Clearly,
$$f^{+l}:=(Y_k,o'_k,g_k)_{k \in \Z/n}:=(X_k^{+l},o_k^{+l},f_k^{+l})_{k \in \Z/n}$$
is a representation of $\calC_n$ on $\PSet$.
Moreover, for all $x \in X_l$, one has $x^{(l,f)}_{ni+(n-1)}=o_{l-1}$ for some minimal integer $i \geq 0$, and one easily deduces that
$x^{(l,f^{+l})}_{n(i+1)}=\gamma$. From there, one easily checks that $f^{+l}$ satisfies condition (ii) in the definition of a terminal representation of $\mathcal{C}_n$.
Since property $(\Aprop_l)$ is satisfied, we have in particular that
$X_{l-1,\alpha} \setminus \{o_{l-1}\} \neq \emptyset$ for every ordinal $\alpha<\ell(f)$.
From there, one shows by transfinite induction that, for every ordinal $\alpha\leq \ell(f)$,
$$\forall k \in \Z/n \setminus \{l\}, \;
Y_{k,\alpha}=X_{k,\alpha} \quad \text{and} \quad
Y_{l,\alpha}=X_{l,\alpha} \cup \{\gamma\}$$
In particular
$$\forall k \in \Z/n \setminus \{l\}, \;
Y_{k,\ell(f)}=X_{k,\ell(f)}=\{o_k\} \quad \text{and} \quad
Y_{l,\ell(f)}=\{o_l,\gamma\},$$
which leads to
$$\forall k \in \Z/n \setminus \{l\}, \;
Y_{k,\ell(f)+1}=\{o_k\} \quad \text{and} \quad Y_{l,\ell(f)+1}=\{\gamma\}.$$
Hence, $f^{+l}$ is a terminal representation of $\mathcal{C}_n$ with length $\ell(f^{+l})=\ell(f)+1$, and this length
is the one of the chain $(Y_{l,\alpha})_\alpha$.
We say that $f^{+l}$ is the \textbf{augmentation of $f$ at stage $l$}.

\begin{Rem}\label{augmentationremark}
Property $(\Aprop_{l+1})$ is obviously satisfied by $f^{+l}$ because $o_l \in Y_{l,\ell(f)} \setminus \{\gamma\}$
and $f^{+l}_l(o_l)=o_{l+1}^+$.
\end{Rem}

Next, we compute the discrete Kaplansky numbers of $f^{+l}$ from those of $f$.
The above identities of sets show that, for all $k \in \Z/n \setminus \{l\}$ and all
$x \in X_k$, one has
\begin{equation}\label{eq:augmentation1}
h_{k,f}(x)=h_{k,f^{+l}}(x).
\end{equation}
Moreover,
\begin{equation}\label{eq:augmentation2}
\forall x \in X_l \setminus \{o_l\}, \quad h_{l,f}(x)=h_{l,f^{+l}}(x) <\ell(f)
\quad \text{and} \quad
h_{l,f^{+l}}(o_l)=\ell(f).
\end{equation}
Using \eqref{eq:augmentation2}, we readily find that $n_{l,\ell(f)}(f^{+l})=1$, and
we also obtain $n_{k,\ell(f)}(f^{+l})=0$ for all $k \in \Z/n \setminus \{l\}$.

Combining the above identities also shows that
$n_{k,\alpha}(f)=n_{k,\alpha}(f^{+l})$ for every ordinal $\alpha<\ell(f)$ such that $\alpha+1 \neq \ell(f)$, and
every $k \in \Z/n$. Assume finally that $\ell(f)$ has a predecessor. Then, one checks that
$n_{k,\ell(f)-1}(f)=n_{k,\ell(f)-1}(f^{+l})$ for all $k \in \Z/n \setminus \{l-1\}$.
Finally, every $x \in X_{l-1}$ such that $h_{l-1,f}(x)=\ell(f)-1$ is mapped by $f_{l-1}^{+l}$ (and also by $f_{l-1}$) to $o_l$, which satisfies
$h_{l,f^{+l}}(o_l)=\ell(f)$. Hence,
$$n_{l-1,\ell(f)-1}(f^{+l})=\big|h_{l-1,f}^{-1}\{\ell(f)-1\}\big|-1=n_{l-1,\ell(f)-1}(f)-1.$$

Let us sum up:

\begin{prop}\label{augmentationprop}
Let $f$ be a terminal representation of $\calC_n$.
Let $l \in \Z/n$ satisfy $(\Aprop_l)$.

Then, $f^{+l}$ has length $\ell(f)+1$. Moreover,
$n_{k,\alpha}(f^{+l})=n_{k,\alpha}(f)$ for every $k \in \Z/n$ and every ordinal $\alpha<\ell(f)$, except for
$k=l-1$ and $\ell(f)=\alpha+1$, in which case $n_{k,\alpha}(f^{+l})=n_{k,\alpha}(f)-1$.
Finally, $n_{k,\ell(f)}(f^{+l})=\delta_{k,l}$ for all $k \in \Z/n$,
and $f^{+l}$ satisfies condition $(\Aprop_{l+1})$.
\end{prop}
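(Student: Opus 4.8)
The statement gathers, as its preamble ``Let us sum up'' indicates, the facts that emerged during the construction of the augmentation $f^{+l}=(Y_k,o'_k,g_k)_{k\in\Z/n}$, so the plan is essentially to organize them. The backbone is the description of the continuous chains $(Y_{k,\alpha})_\alpha$ of $f^{+l}$ in terms of those of $f$: one checks by transfinite induction on $\alpha\le\ell(f)$ that $Y_{k,\alpha}=X_{k,\alpha}$ for $k\ne l$ and $Y_{l,\alpha}=X_{l,\alpha}\cup\{\gamma\}$, and then reads off $Y_{k,\ell(f)+1}=\{o_k\}$ for $k\ne l$, $Y_{l,\ell(f)+1}=\{\gamma\}$, a configuration stable from then on. This yields at once $\ell(f^{+l})=\ell(f)+1$ — the length being realized by the chain of index $l$ — and, by comparing consecutive terms, the height identities \eqref{eq:augmentation1} and \eqref{eq:augmentation2} ($h_{k,f^{+l}}$ agrees with $h_{k,f}$ on $X_k$ for $k\ne l$; on $X_l$, $h_{l,f^{+l}}(x)=h_{l,f}(x)<\ell(f)$ for $x\ne o_l$, whereas $h_{l,f^{+l}}(o_l)=\ell(f)$). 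The only delicate point in the induction is the successor step at index $l$: when $g_{l-1}$ reroutes $o_{l-1}$ to the new base point $\gamma$, one must verify that $o_l=f_{l-1}(o_{l-1})$ still lies in $f_{l-1}\bigl(X_{l-1,\alpha}\setminus\{o_{l-1}\}\bigr)$, so that $X_{l,\alpha+1}$ is not shrunk; this is exactly what hypothesis $(\Aprop_l)$ provides.

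With the height identities in hand, it remains to compute the discrete Kaplansky numbers $n_{k,\alpha}(f^{+l})$ directly from their definition, splitting on $\alpha$. For $\alpha>\ell(f)$, both $n_{k,\alpha}(f^{+l})$ and $n_{k,\alpha}(f)$ vanish. For $\alpha<\ell(f)$ with $\alpha+1\ne\ell(f)$, each set $h_{k,\cdot}^{-1}\{\alpha\}$, $h_{k+1,\cdot}^{-1}\{\alpha+1\}$ and each preimage $g_k^{-1}\{y\}$ that appears in the formula for $n_{k,\alpha}$ is unchanged on passing from $f$ to $f^{+l}$, since the only map-values that were altered, $o_{l-1}\mapsto\gamma$ and $\gamma\mapsto o_{l+1}$, concern points of infinite height only; hence $n_{k,\alpha}(f^{+l})=n_{k,\alpha}(f)$.

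There remain the boundary values. If $\ell(f)$ has a predecessor and $\alpha=\ell(f)-1$, the same stability gives $n_{k,\alpha}(f^{+l})=n_{k,\alpha}(f)$ for $k\ne l-1$, while for $k=l-1$ the first summand of $n_{l-1,\alpha}(f^{+l})$ is empty — no point of height $\alpha$ is sent by $g_{l-1}$ into $Y_{l,\ell(f)+1}=\{\gamma\}$, since only $o_{l-1}$ maps to $\gamma$ and it has infinite height — whereas the second summand collapses to the single contribution $\card\bigl(h_{l-1,f}^{-1}\{\alpha\}\bigr)-1$ coming from $y=o_l$, using that $f_{l-1}$ maps all of $X_{l-1,\alpha}$ onto $\{o_l\}$; comparison with $n_{l-1,\alpha}(f)=\card\bigl(h_{l-1,f}^{-1}\{\alpha\}\bigr)$ gives $n_{l-1,\alpha}(f^{+l})=n_{l-1,\alpha}(f)-1$, the passage to the cardinal predecessor being legitimate because $(\Aprop_l)$ forces $h_{l-1,f}^{-1}\{\alpha\}\ne\emptyset$. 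For $\alpha=\ell(f)$, the unique point of $Y_l$ of height $\ell(f)$ is $o_l$, whose image $o_{l+1}$ has infinite height, so $o_l$ contributes $1$ to the first summand and the second summand is empty; at every other index there is no point of height $\ell(f)$; thus $n_{k,\ell(f)}(f^{+l})=\delta_{k,l}$. Finally, property $(\Aprop_{l+1})$ for $f^{+l}$ is precisely the content of Remark \ref{augmentationremark}: $o_l$ belongs to $Y_{l,\alpha}\setminus\{o'_l\}$ for every $\alpha\le\ell(f)=\ell(f^{+l})-1$, and $g_l(o_l)=o'_{l+1}$. The one genuinely non-routine ingredient in all of this is the appeal to $(\Aprop_l)$ inside the transfinite induction of the first paragraph; the rest is bookkeeping on continuous chains and elementary cardinal arithmetic.
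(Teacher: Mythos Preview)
Your proposal is correct and follows essentially the same approach as the paper: the proposition is stated after ``Let us sum up'', and its proof \emph{is} the construction and computations carried out in the preceding paragraphs of Section~\ref{augmentationSection}, which you have organized and in places made more explicit (notably pinpointing that $(\Aprop_l)$ is exactly what is needed in the successor step of the transfinite induction to ensure $o_l\in f_{l-1}(X_{l-1,\alpha}\setminus\{o_{l-1}\})$). One small imprecision: in the case $n=1$ the rerouted point $o_{l-1}=o_l$ has height $\ell(f)$ in $f^{+l}$ rather than $\infty$, and $g_l(o_l)=\gamma$ rather than $o_{l+1}$; this does not affect any of your conclusions since the relevant heights are still $\ge\ell(f)$, but the phrase ``points of infinite height only'' should be softened accordingly.
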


Here is a simple application of the above construction. Fix $k \in \Z/n\Z$.
We start from the trivial terminal representation of $\calC_n$, which we denote by
$f^{(0)}:=(\{\emptyset\},\emptyset,\id_{\{\emptyset\}})_{l \in \Z/n}$.
Note that the length of $f^{(0)}$ is $0$ and that property $(\Aprop_l)$ is trivially satisfied for every $l \in \Z/n$
(because no ordinal is less than $0$!).
By induction, we define a sequence $(f^{(k,i)})_{i \geq 0}$ of terminal representations of
$\calC_n$, in which $f^{(k,i)}$ has length $i$ and satisfies condition $(\Aprop_{k+i})$, as follows:
$$f^{(k,0)}:=f^{(0)} \quad \text{and} \quad \forall i \geq 0, \; f^{(k,i+1)}=(f^{(k,i)})^{+(k+i)}.$$
Then, by induction we obtain that for every integer $i >0$,
$n_{l,\alpha}(f^{(k,i)})=0$ for every $l \in (\Z/n) \setminus \{k+i-1\}$ and every ordinal $\alpha$, whereas
$n_{k+i-1,\alpha}(f^{(k,i)})=\delta_{\alpha,i-1}$ for every ordinal $\alpha$.

From there, the Cyclic Kaplansky Theorem can be used to rediscover a known result on locally nilpotent linear representations of $\calC_n$ with finite
support (see \cite{Gelonch}):

\begin{theo}
Every locally nilpotent linear representation of $\calC_n$ with finite dimension is isomorphic to a direct sum of representations of the form
$\widetilde{f^{(k,i)}}$ with $k \in \Z/n$ and $i \geq 1$.
\end{theo}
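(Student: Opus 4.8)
The plan is to deduce the statement from the Cyclic Kaplansky Theorem together with the computation of the discrete Kaplansky numbers $n_{k,\alpha}(f^{(k,i)})$ carried out just above.

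First I would reduce to the reduced case. Let $u=(U_k,u_k)_{k\in\Z/n}$ be a locally nilpotent linear representation of $\calC_n$ with finite dimension. We have $u\simeq u_\infty\oplus u_{\red}$, and by Theorem \ref{saturatedJordanTheo} the saturated part $u_\infty$ is isomorphic to a direct sum of infinite Jordan cycle-cells; as each such cell has countably infinite dimension, the finite-dimensionality of $u$ forces that direct sum to be empty, i.e.\ $u_\infty$ has all its spaces equal to $\{0\}$. Hence $u\simeq u_{\red}$ is reduced, so $\kappa_{k,\infty}(u)=0$ for all $k\in\Z/n$; moreover $\ell(u)\leq\dim u<\infty$, so each cardinal $m_{k,\alpha}:=\kappa_{k,\alpha}(u)$ is a non-negative integer, vanishing whenever $\alpha\geq\ell(u)$, and hence for all but finitely many pairs $(k,\alpha)$.

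Next I would record the cyclic Kaplansky invariants of the building blocks $\widetilde{f^{(k,i)}}$. By the computation preceding the statement, $n_{l,\alpha}(f^{(k,i)})=0$ for $l\neq k+i-1$ and $n_{k+i-1,\alpha}(f^{(k,i)})=\delta_{\alpha,i-1}$ when $i\geq1$, so Lemma \ref{realizationlemma} gives $\kappa_{l,\alpha}(\widetilde{f^{(k,i)}})=\delta_{l,\,k+i-1}\,\delta_{\alpha,\,i-1}$; moreover $\widetilde{f^{(k,i)}}$ is reduced, locally nilpotent and of finite dimension, being the linear realization of the terminal representation $f^{(k,i)}$. Thus, for a pair $(l,\alpha)\in(\Z/n)\times\N$, putting $k:=l-\alpha$ (class modulo $n$) and $i:=\alpha+1\geq1$, the representation $\widetilde{f^{(l-\alpha,\,\alpha+1)}}$ has $\kappa_{l,\alpha}=1$ as its only non-zero cyclic Kaplansky invariant. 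Now set
$$v:=\bigoplus_{(l,\alpha)\in(\Z/n)\times\N}\bigl(\widetilde{f^{(l-\alpha,\,\alpha+1)}}\bigr)^{\oplus m_{l,\alpha}},$$
which, since $m_{l,\alpha}=0$ for $\alpha\geq\ell(u)$, is a finite direct sum of representations of the announced form. Then $v$ has finite dimension, is reduced (a direct sum of reduced representations is reduced, by the additivity of the chains $(U_{k,\alpha})_\alpha$ under direct sums recorded in Section \ref{invariantsSection}) and locally nilpotent (finite direct sum of locally nilpotent representations); and, by the additivity of the Kaplansky invariants under direct sums, $\kappa_{l,\alpha}(v)=m_{l,\alpha}=\kappa_{l,\alpha}(u)$ for every $l\in\Z/n$ and every $\alpha\in\omega_1$, while $\kappa_{l,\infty}(v)=0=\kappa_{l,\infty}(u)$.

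Finally I would invoke the Cyclic Kaplansky Theorem (Theorem \ref{mainTheoreduced}): $u$ and $v$ are reduced locally nilpotent linear representations of $\calC_n$ of countable (here finite) dimension having the same cyclic Kaplansky invariants, hence $u\simeq v$, which is exactly the asserted decomposition. The only genuine work is the bookkeeping — translating the pair $(k,i)$ labelling a block into the pair $(l,\alpha)=(k+i-1,\,i-1)$ labelling its invariant, and checking that the relevant $m_{k,\alpha}$ are finite and almost all zero so that $v$ is finite-dimensional; there is no real obstacle, since reducedness, local nilpotency, and additivity of the invariants under direct sums have all been established earlier.
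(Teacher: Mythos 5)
Your proof is correct and follows essentially the same route as the paper: reduce to the reduced case, observe that the invariants are finite and almost all zero, build a direct sum of blocks $\widetilde{f^{(k,i)}}$ with matching invariants using the computation $\kappa_{l,\alpha}(\widetilde{f^{(k,i)}})=\delta_{l,k+i-1}\delta_{\alpha,i-1}$, and conclude by the Cyclic Kaplansky Theorem. The only cosmetic difference is in establishing reducedness: the paper argues directly that $(\pi u)_k$ restricted to $U_{k,\infty}$ is a nilpotent surjective endomorphism of a finite-dimensional space, hence $U_{k,\infty}=\{0\}$, whereas you invoke Theorem \ref{saturatedJordanTheo} to say $u_\infty$ is a sum of infinite Jordan cycle-cells which must be empty by finite-dimensionality; both are valid, though the paper's is a touch more elementary.
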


\begin{proof}
Let $u$ be a linear representation of $\calC_n$ that is locally nilpotent and with finite dimension.
First, we note that $u$ is reduced. Indeed, if we write $u=(U_k,u_k)_{k \in \Z/n}$, then, for all $k \in \Z/n$,
 $(\pi u)_k$ induces an endomorphism of $U_{k,\infty}$ that is both nilpotent (because it is locally nilpotent and $U_{k,\infty}$
 is finite-dimensional) and surjective, and hence $U_{k,\infty}=\{0\}$ because $U_{k,\infty}$ is finite-dimensional.

Next, the length of $u$ is obviously finite, and all the cyclic Kaplansky invariants of $f$ are finite.
This shows that we can choose a finite set $I$ together with a mapping $t : I \rightarrow (\Z/n) \times \N^*$ in which, for all
$(k,\alpha)\in \Z/n \times \N$, the set $t^{-1}\{(k,\alpha+1)\}$ has cardinality $\kappa_{k+\alpha,\alpha}(u)$.
Now, set $g:=\underset{i \in I}{\bigoplus} \widetilde{f^{t(i)}}$, which is a (reduced) finite-dimensional locally nilpotent linear representation of $\calC_n$. By the above and Lemma \ref{realizationlemma}, we find that, for all $(k,\alpha)\in \Z/n \times \N$,
$$\kappa_{k,\alpha}(g)=\card t^{-1}\{(k-\alpha,\alpha+1)\}=\kappa_{k,\alpha}(u).$$
By the Cyclic Kaplansky Theorem, we conclude that $u$ is isomorphic to $g$.
\end{proof}

\subsection{Pointed sums of terminal representations}\label{pointedsumsSection}

Our next construction is the one of pointed sums.
Let $(f^{(i)})_{i \in I}$ be a family of terminal representations of $\calC_n$, indexed over a non-empty set $I$.
For all $i \in I$, let us write $f^{(i)}=\bigl(X_k^{(i)},o_k^{(i)},f_k^{(i)}\bigr)_{k \in \Z/n}$.
For all $k \in \Z/n$, we consider the external disjoint union
$$X_k=\coprod_{i \in I} X_k^{(i)}:=\bigl\{(i,x)\mid i \in I, \; x \in X_k^{(i)}\bigr\},$$
and we define the concatenation $f_k : X_k \rightarrow X_{k+1}$ of the mappings $f_k^{(i)}$ as follows:
$$\forall (i,x)\in \coprod_{i \in I} X_k^{(i)}, \quad f_k(i,x)=\bigl(i,f_k^{(i)}(x)\bigr).$$
For each $k \in \Z/n$, we define an equivalence relation
$\sim_k$ on $X_k$ (called \textbf{$k$-equivalence}) as follows:
$(i,x)\sim_k (j,y)$ if and only if $(i,x)=(j,y)$ or $(x,y)=(o_k^{(i)},o_k^{(j)})$.
Given $k \in \Z/n$, we denote by $X'_k$ the quotient set of $X_k$ under $\sim_k$, by $\pi_k : X_k \twoheadrightarrow X'_k$
the canonical projection, and by $o'_k$ the image of any $(i,o_k^{(i)})$ under $\pi_k$.
We note that, for all $k \in \Z/n$, the mapping $f_k$ takes any two $k$-equivalent elements of $X_k$ to two $(k+1)$-equivalent elements of $X_{k+1}$,
and hence it induces a mapping $f'_k : X'_k \rightarrow X'_{k+1}$; obviously $f'_k(o'_k)=o'_{k+1}$.
By transfinite induction, the following equalities are obtained:
$$\forall \alpha, \;\forall k \in \Z/n, \; X_{k,\alpha}=\coprod_{i \in I} X_{k,\alpha.}^{(i)}$$
In particular, for every ordinal $\alpha$ and every $k \in \Z/n$, one sees that
$X_{k,\alpha+1}=X_{k,\alpha}$ if and only if
$\forall i \in I, \;   X_{k,\alpha}^{(i)}=X_{k+1,\alpha}^{(i)}$, and from there
$\ell(f)$ is shown to be the supremum of the set $\{\ell(f_i) \mid i \in I\}$.
Moreover, for all $k \in \Z/n$, $i \in I$ and $x \in X_i$, the height of $(i,x)$ with respect to
the chain $(X_{k,\alpha})_{\alpha}$ equals the height of $x$ with respect to the chain $(X_{k,\alpha}^{(i)})_{\alpha}$.

By transfinite induction, one finds
$$\forall \alpha, \; \forall k \in \Z/n, \;  X'_{k,\alpha}=\pi_k(X_{k,\alpha}).$$
In particular,
$$\forall k \in \Z/n, \; \forall \gamma \geq \ell(f), \;  X'_{k,\gamma}=\pi_k\bigl(\{(i,o_k^{(i)}) \mid i \in I\}\bigr)=\{o'_k\}.$$
Finally, for every $k \in \Z/n$, every $i \in I$, every $x \in X_k^{(i)}$ and every integer $p \geq 0$, we find by induction that
$$(\pi_k(i,x))^{(k,f')}_p=\pi_k \bigl(i,x^{(k,f)}_p\bigr),$$
and hence $(\pi_k(i,x))^{(k,f')}_{nl}=o'_k$ for some integer $l \geq 0$.

We deduce that $(X'_k,o'_k,f'_k)$ is a terminal representation of $\calC_n$ whose length is at most $\ell(f)$.
Let $\alpha<\ell(f)$ be an ordinal. There exists $i \in I$ such that $\alpha<\ell(f_i) \leq \ell(f)$,
and hence we can find some $k \in \Z/n$ and some $x \in X_{k,\alpha}^{(i)} \setminus \{o_k^{(i)}\}$.
Set $y:=\pi_k(i,x)$. Then $y \in X'_{k,\alpha} \setminus \{o'_k\}$, and hence $\ell(f') >\alpha$.
It follows that $\ell(f') \geq \ell(f)$, and we conclude that $\ell(f')=\ell(f)=\sup\bigl\{ \ell(f_i)\mid i \in I\bigr\}$.

The terminal representation
$$\bigvee_{i \in I} f^{(i)}:=(X'_k,o'_k,f'_k)_{k \in \Z/n}$$
is called the \textbf{pointed sum} of the $f_i$ representations.
It is easily checked that the linear realization
of $\underset{i \in I}{\bigvee} f^{(i)}$ is isomorphic to the direct sum $\underset{i \in I}{\bigoplus} \widetilde{f^{(i)}}$
of the linear realizations of the terminal representations $f^{(i)}$.
Since the cyclic Kaplansky invariants are additive with respect to direct sums, we obtain
the cyclic Kaplansky numbers of $\underset{i \in I}{\bigvee} f^{(i)}$ as functions of those of the $f^{(i)}$'s, thanks to Lemma \ref{realizationlemma}:

\begin{prop}\label{pointedsumprop}
Let $(f^{(i)})_{i \in I}$ be a family of terminal representations of $\calC_n$ indexed over a non-empty set $I$.
Set $f':=\underset{i \in I}{\bigvee} f^{(i)}$.
One has
$$\ell(f')=\sup_{i \in I}\, \ell(f_i).$$
Moreover, for every ordinal $\alpha$ and every $k \in \Z/n$,
$$n_{k,\alpha}(f')=\sum_{i \in I} n_{k,\alpha}(f^{(i)}).$$
\end{prop}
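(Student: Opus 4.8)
The plan is to reduce the statement to facts already obtained in the preceding discussion. The length formula is essentially immediate: we showed just above that $\ell(f')=\ell(f)=\sup\{\ell(f_i)\mid i\in I\}$, where $f=(X_k,f_k)_{k\in\Z/n}$ is the concatenated representation, so there is nothing left to prove for the first equality.

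For the formula on the discrete Kaplansky numbers, the first step I would take is to apply Lemma \ref{realizationlemma} to $f'$ itself, obtaining $n_{k,\alpha}(f')=\kappa_{k,\alpha}(\widetilde{f'})$ for every $k\in\Z/n$ and every ordinal $\alpha$. The second step, which is the only point requiring genuine care, is to make explicit the isomorphism $\widetilde{f'}\simeq\bigoplus_{i\in I}\widetilde{f^{(i)}}$. Fix $k\in\Z/n$. Because the relation $\sim_k$ identifies only the basepoints $(i,o_k^{(i)})$ among themselves, the canonical projection $\pi_k$ restricts to a bijection from $\coprod_{i\in I}\bigl(X_k^{(i)}\setminus\{o_k^{(i)}\}\bigr)$ onto $X'_k\setminus\{o'_k\}$; hence $\F-\Real(X'_k,o'_k)=\F^{(X'_k\setminus\{o'_k\})}$ is canonically identified with $\bigoplus_{i\in I}\F^{(X_k^{(i)}\setminus\{o_k^{(i)}\})}$. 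Under this identification one checks that $\widetilde{f'_k}$ corresponds to $\bigoplus_{i\in I}\widetilde{f_k^{(i)}}$: indeed, for $y=\pi_k(i,x)$ with $x\in X_k^{(i)}\setminus\{o_k^{(i)}\}$ we have $f'_k(y)=\pi_k\bigl(i,f_k^{(i)}(x)\bigr)$, so that $\widetilde{f'_k}(e_y)=e_{f'_k(y)}$ equals the image of $e_x$ under $\widetilde{f_k^{(i)}}$ inside the $i$-th summand — the case $f_k^{(i)}(x)=o_{k+1}^{(i)}$ being consistent because both vectors are then zero, by the convention $e_o=0$.

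Finally, I would combine this isomorphism with the additivity of the cyclic Kaplansky invariants with respect to direct sums (Section \ref{invariantsSection}) and with Lemma \ref{realizationlemma} applied to each $f^{(i)}$, which yields
$$n_{k,\alpha}(f')=\kappa_{k,\alpha}(\widetilde{f'})=\sum_{i\in I}\kappa_{k,\alpha}\bigl(\widetilde{f^{(i)}}\bigr)=\sum_{i\in I}n_{k,\alpha}(f^{(i)}),$$
as desired. I do not anticipate any real obstacle here; the sole subtlety is the bookkeeping for the isomorphism $\widetilde{f'}\simeq\bigoplus_i\widetilde{f^{(i)}}$, in particular keeping track of the convention $e_o=0$ so that arrows mapping a point to a basepoint correspond precisely to basis vectors mapped to $0$. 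Everything else is a routine transcription.
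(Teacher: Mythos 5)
Your proposal is correct and follows exactly the paper's route: the length formula is taken from the preceding computation of $\ell(f')$, and the Kaplansky-number formula is obtained from the isomorphism $\widetilde{f'}\simeq\bigoplus_{i\in I}\widetilde{f^{(i)}}$ combined with the additivity of the cyclic Kaplansky invariants and Lemma \ref{realizationlemma}. The only difference is that you spell out the identification that the paper dismisses as ``easily checked,'' and your bookkeeping (the bijection of $\pi_k$ off the basepoints and the convention $e_o=0$) is accurate.
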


\subsection{Proof of the Realization Theorem}\label{proofrealizationtheoSection}

\begin{Not}
Let $D$ be a countable subset of $(\Z/n) \times \omega_1$.

We define $\CSup(D)$ (the combined support of $D$) as the projection of $D$ on $\omega_1$, and the \textbf{supremum} of
$D$, denoted by $\Ubd(D)$, as the upper-bound of $\CSup(D)$.
\end{Not}

In particular, $\Ubd(\emptyset)=0$.

\begin{Not}
We denote by
$\calD$ the set of all non-empty admissible subsets of $(\Z/n) \times \omega_1$ for which the second projection has no greatest element, and
by $\calD'$ the set of all non-empty admissible subsets of $(\Z/n) \times \omega_1$.
\end{Not}

\begin{lemma}[First Partitioning Lemma]\label{splitlemma1}
There exists a mapping
$\varphi : \mathcal{D} \rightarrow \mathcal{D}^\N$ such that, for all $D \in \mathcal{D}$, the family $(\varphi_p(D))_{p \in \N}$ is a partition of $D$,
and for every $p \in \N$, $\varphi_p(D)$ has the same deficiency and the same supremum as $D$.
\end{lemma}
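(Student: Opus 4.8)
The plan is to split $D$ one coordinate at a time and then reassemble the pieces. For $D\in\mathcal{D}$ and $k\in\Z/n$ write $D^{(k)}:=\{\alpha\in\omega_1:(k,\alpha)\in D\}$, a countable subset of $\omega_1$, and let $L_k:=\{\delta\in\omega_1:\delta\text{ is a limit ordinal and }\sup(D^{(k)}\cap\delta)=\delta\}$ be the set of ordinals at which $D^{(k)}$ accumulates from below. Unwinding the definition of deficiency, one sees that $(k,\delta)\in\defi(D)$ if and only if $\delta$ is a limit ordinal and $\delta\notin L_k$; moreover $\Ubd(D)=\max_{k\in\Z/n}\sup D^{(k)}$, and since $\CSup(D)$ has no greatest element this common value $\lambda$ is a limit ordinal lying in no $D^{(k)}$, so that $\lambda\in L_k$ whenever $\sup D^{(k)}=\lambda$, and at least one such $k$ exists. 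Hence it suffices to produce, for each $k$, a sequence $(D^{(k)}_p)_{p\in\N}$ of (possibly empty) subsets partitioning $D^{(k)}$ with $\{\delta\in\omega_1:\delta\text{ limit},\ \sup(D^{(k)}_p\cap\delta)=\delta\}=L_k$ for all $p$: putting $\varphi_p(D):=\{(k,\alpha):k\in\Z/n,\ \alpha\in D^{(k)}_p\}$ then yields a partition of $D$ into countable sets, each of deficiency $\defi(D)$ (the deficiency is determined by the sets $L_k$, which are preserved), each with combined support of supremum $\lambda$ (using any coordinate $k$ with $\sup D^{(k)}=\lambda$), hence each non-empty with combined support having no greatest element, and each admissible since it is countable, has deficiency $\defi(D)$, and lies inside the admissible set $D$. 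So $\varphi_p(D)\in\mathcal{D}$ with the required invariants, and a final appeal to the axiom of choice — pick for each $D$ one such family of coordinatewise splittings — produces the map $\varphi$.

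The substance is therefore the one-set statement: given a countable $S\subseteq\omega_1$ with $L:=\{\delta\in\omega_1:\delta\text{ limit},\ \sup(S\cap\delta)=\delta\}$, partition $S$ into a sequence $(S_p)_{p\in\N}$ with $\{\delta\text{ limit}:\sup(S_p\cap\delta)=\delta\}=L$ for every $p$. First I would record the elementary fact that a countable set cofinal in a countable limit ordinal $\delta$ splits into countably many pieces each still cofinal in $\delta$: extract from the set an increasing sequence cofinal in $\delta$ (interleaving it with a fixed $\omega$-sequence cofinal in $\delta$, which exists because $\delta$ has cofinality $\omega$), then recolour its terms by a map $\N\to\N$ all of whose fibres are infinite. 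Next, set $L^{\circ}:=\{\delta\in L:\sup(L\cap\delta)<\delta\}$; for $\delta\in L^{\circ}$ put $\gamma_\delta:=\sup(L\cap\delta)$ and $I_\delta:=\{\alpha\in S:\gamma_\delta<\alpha<\delta\}$. Then $I_\delta$ is cofinal in $\delta$ (as $S$ is and $\gamma_\delta<\delta$), and the sets $I_\delta$ are pairwise disjoint: if $\delta_1<\delta_2$ lie in $L^{\circ}$ then $\delta_1\in L\cap\delta_2$, hence $\gamma_{\delta_2}\geq\delta_1$, so $I_{\delta_1}$ lies below $\delta_1$ and $I_{\delta_2}$ above $\delta_1$. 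Split each $I_\delta$ into pieces $(I_\delta^p)_{p\in\N}$, each cofinal in $\delta$, put $S_p:=\bigcup_{\delta\in L^{\circ}}I_\delta^p$ for $p\geq1$, and let $S_0:=S\setminus\bigcup_{p\geq1}S_p$. Finally check that $\{\delta\text{ limit}:\sup(S_p\cap\delta)=\delta\}=L$ for every $p$: the inclusion $\subseteq$ is clear since $S_p\subseteq S$; for $\supseteq$, a $\delta\in L^{\circ}$ is dealt with directly, as $S_p\supseteq I_\delta^p$ is cofinal in $\delta$, while for $\delta\in L\setminus L^{\circ}$, writing $L\cap\delta=\{\ell_\xi:\xi<\mu\}$ in increasing order with $\mu$ necessarily a limit ordinal, the ordinals $\ell_{\xi+1}$ (for $\xi<\mu$) lie in $L^{\circ}$ and are cofinal in $\delta$, so $S_p$ is cofinal in cofinally many ordinals below $\delta$ and therefore in $\delta$.

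The step I expect to be the real obstacle, and the one the whole construction is designed around, is the case of a $\delta\in L$ that is itself a limit of points of $L$: there is then no block of $S$ sitting just below $\delta$ that one could parcel out among the $S_p$ (the reservoir $I_\delta$ degenerates), so the cofinality of each $S_p$ at $\delta$ cannot be arranged by hand and must be inherited from smaller ordinals. The point that makes this inheritance work is that $L^{\circ}$ is cofinal below every such $\delta$; granted that, the remaining verifications are routine bookkeeping, and together with the coordinatewise reduction of the first paragraph they complete the proof.
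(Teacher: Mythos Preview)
Your argument is correct and takes a genuinely different route from the paper's. Both proofs begin with the same coordinatewise reduction --- split each slice $D^{(k)}=\{\alpha:(k,\alpha)\in D\}$ separately and reassemble --- but the paper handles the one-set problem by a uniform coloring trick rather than by your structural analysis of~$L$. Concretely, the paper takes the unique order-isomorphism $f_k:D^{(k)}\to\rho_k$ onto an ordinal, composes with a fixed map $G:\omega_1\to\N$ built so that every infinite interval of ordinals meets every fiber $G^{-1}\{p\}$ (namely, on each block $[\beta,\beta+\omega)$ for $\beta$ limit or zero, $G$ is $\N\to\N^2\to\N$ via a bijection and second projection), and sets $F_k=G\circ f_k$. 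The point is then just that $(k,\delta)\notin\defi(D)$ forces $D^{(k)}\cap(\alpha,\delta)$ to be infinite for every $\alpha<\delta$, so its image under $f_k$ is an infinite interval and every color appears. No case distinction on the structure of the accumulation set is needed.

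Your approach instead dissects $L$: you isolate $L^\circ=\{\delta\in L:\sup(L\cap\delta)<\delta\}$, parcel out the reservoir $I_\delta=S\cap(\gamma_\delta,\delta)$ below each such $\delta$ into countably many cofinal pieces, and then --- this is the step you rightly flag as the crux --- show that every $\delta\in L\setminus L^\circ$ inherits cofinality because successor-indexed elements of the increasing enumeration of $L\cap\delta$ automatically lie in $L^\circ$ and are cofinal in $\delta$. The paper's method is shorter and avoids this case analysis entirely; yours gives a more explicit picture of where the cofinal witnesses actually sit, and singles out the one genuinely nontrivial phenomenon (limits of accumulation points) in a way the coloring argument hides.
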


\begin{Def}
Let $D \in \mathcal{D}$. Set $\delta:=\Ubd(D)$.
A \textbf{good partition} of $D$ is a partition $(D_p)_{p \in \N} \in (\calD' \setminus \calD)^\N$ of $D$ in which, for every $k \in \Z/n$ such that $(k,\delta)\not\in \defi(D)$, and every ordinal $\alpha<\delta$, there exists $p \in \N$ and an ordinal $\beta \in \left)\alpha,\delta\right($ such that $(k,\beta) \in D_p$ and $\Ubd(D_p)=\beta$.
\end{Def}

\begin{lemma}[Second Partitioning Lemma]\label{splitlemma2}
Every element of $\mathcal{D}$ has a good partition.
\end{lemma}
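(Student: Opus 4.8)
The plan is to construct the good partition by a combinatorial bookkeeping argument, the entire difficulty being to keep every block admissible after its combined support has been pruned below $\delta$. Write $\delta := \Ubd(D)$. Since $\CSup(D)$ has no greatest element, $\delta$ is a limit ordinal with $\delta \notin \CSup(D)$, and --- $D$ being countable --- $\delta$ is a countable ordinal, hence of cofinality $\omega$. I would first record two preliminary facts. (1) The set $K := \{k \in \Z/n : (k,\delta) \notin \defi(D)\}$ is non-empty: otherwise one could pick $\alpha_k < \delta$ with no class-$k$ element of $D$ in $\left)\alpha_k,\delta\right($ for each $k$, and then $\alpha^\star := \max_{k \in \Z/n} \alpha_k < \delta$ would bound $\CSup(D)$, contradicting $\Ubd(D) = \delta$. (2) For each $k \in K$, since $(k,\delta) \notin \defi(D)$, the Remark following the definition of admissibility gives, for every $\alpha < \delta$, infinitely many $\beta \in \left)\alpha,\delta\right($ with $(k,\beta) \in D$; so I can fix a strictly increasing sequence $(\beta^k_m)_{m \in \N}$ with $(k,\beta^k_m) \in D$ and $\sup_m \beta^k_m = \delta$.

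\emph{The admissibility criterion.} Call $(k,\lambda)$ a \emph{live pair} when $\lambda$ is a limit ordinal with $(k,\lambda) \notin \defi(D)$ (i.e.\ the class-$k$ part of $D$ is cofinal in $\lambda$), and put $R(k,\lambda) := \{(k+1+l,\lambda+l) : l \in \N\}$. Because $D$ is admissible, for a subset $D' \subseteq D$ every element of $\defi(D') \setminus \defi(D)$ is a live pair, and $D'$ is admissible if and only if, for every live pair $(k,\lambda)$ with $\lambda \leq \Ubd(D')$, either the class-$k$ part of $D'$ stays cofinal in $\lambda$, or $D' \cap R(k,\lambda) = \emptyset$. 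Two structural facts make this usable: each element of $D$ lies on $R(k,\lambda)$ for at most one live pair (the pair is forced by the limit part $\lambda$ of the ordinal), and $R(k,\lambda)$ sits at ordinals $\geq \lambda$ whereas the ``approach'' of $(k,\lambda)$ --- the class-$k$ part of $D$ below $\lambda$, which is cofinal in $\lambda$ --- sits at ordinals $< \lambda$; hence the obligations among live pairs strictly descend in $\lambda$ and are well-founded.

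\emph{Construction.} Enumerate the tasks $\{(k,m) : k \in K,\ m \in \N\}$ and give task $(k,m)$ a single block, which I intend to have combined support with greatest element exactly $\beta^k_m$, attained at $(k,\beta^k_m)$. As $\beta^k_m < \delta$, each block's support is then automatically bounded, so it has a greatest element and lies in $\calD' \setminus \calD$; and goodness is automatic, since for any $k \in K$ and $\alpha < \delta$ some $\beta^k_m$ exceeds $\alpha$. I would then distribute the points of $D$ among these blocks, organizing the choice along the limit part of the ordinal: placing a point of $R(k,\lambda)$ into a block creates the obligation that the block keep a cofinal (in $\lambda$) share of the approach of $(k,\lambda)$; such an approach --- a cofinal $\omega$-sequence in $\lambda$ --- is split among the at most countably many blocks obligated to it, using the elementary fact that a cofinal $\omega$-sequence in a limit ordinal decomposes into countably many cofinal subsequences, and the (countably many) reservations across all live pairs are made pairwise disjoint by a routine diagonal bookkeeping. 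Because every point of $D$ has ordinal $< \delta$, it is below some $\beta^k_m$ and so fits in a block; and because the obligations are well-founded in $\lambda$, the process terminates. Finally I would check that the blocks partition $D$, that each is non-empty with greatest element $\beta^k_m$, that each is admissible by the criterion above, and that the partition is good.

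\emph{Main obstacle.} The hard part is that last step: reconciling (a) boundedness of each block's support (so that it has a maximum), (b) the ``cofinal-or-disjoint'' constraint at every live pair below that maximum, and (c) exhaustion of $D$. A block obliged to be cofinal at $\lambda$ must climb arbitrarily close to $\lambda$ while still halting at a maximum above $\lambda$, and the approach points below $\lambda$ --- some of which are themselves points of forbidden rays of smaller live pairs, triggering further obligations --- together with the ray points above $\lambda$ must all be absorbed by blocks entitled to hold them. Well-foundedness in $\lambda$ together with the splitting of cofinal $\omega$-sequences is precisely what lets this bookkeeping close, and carrying it out is where the genuine work lies.
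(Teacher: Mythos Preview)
Your structural observations --- the admissibility criterion for subsets of an admissible $D$, the uniqueness of the live pair through a given point, the well-foundedness of the obligation chain in $\lambda$ --- are all correct and would underpin a direct construction. But the proposal halts exactly where you yourself flag the main obstacle: the ``routine diagonal bookkeeping'' is not routine, and it is not done. Obligations proliferate downward (each cofinal witness sits on a ray at some smaller $\lambda'$, spawning a fresh obligation), countably many blocks compete for the same approach sets, exhaustion of $D$ must be interleaved with obligation-servicing, and each block's maximum must stay pinned at its target $\beta^k_m$. Writing out an interleaving recursion that reconciles all of this simultaneously is the substance of the proof; without it you have a plan, not an argument.

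The paper takes a different and much shorter route that sidesteps the simultaneous construction entirely. It establishes a one-step \emph{elementary splitting}: given the current remainder (still in $\mathcal{D}$, with the same deficiency as the original) and a chosen point $y$ with ordinal $\delta'+j$ ($\delta'$ limit), truncate to $D' := D \cap \bigl(\Z/n \times \left(0,\delta'\right(\bigr)$, apply the First Partitioning Lemma to $D'$, and take the new block to be $\varphi_0(D') \cup \{y\}$. The guarantee $\defi(\varphi_0(D')) = \defi(D')$ from that lemma makes both the admissibility of this block and the equality $\defi(D \setminus D_1) = \defi(D)$ a short verification. Iterating along an enumeration of $D$ --- choosing $y$ at each step so that its class and ordinal service the goodness clause --- produces the good partition. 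All of your ray/cofinality bookkeeping is thus absorbed into a single black-box call to the First Partitioning Lemma per block, which is why the paper's argument stays short where yours becomes open-ended.
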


The following proofs use a standard feature of ordinals:
every ordinal $\alpha$ splits uniquely into the sum
$\beta+k$ for some ordinal $\beta$ with no predecessor, and some non-negative integer $k$.

\begin{proof}[Proof of the First Partitioning Lemma]
Let $k \in \Z/n$.
Then $D_k:=\{\alpha \in \omega_1 : (k,\alpha)\in D\}$ is a subset of $\omega_1$ and hence it is well-ordered by $\leq$.
Hence, there is a unique increasing injection $f_k : D_k \rightarrow \omega_1$ such that $f_k(D_k)$ is an ordinal.
Next, we construct a mapping $G : \omega_1 \rightarrow \N$ as follows:
we choose a bijection $H : \N \overset{\simeq}{\rightarrow} \N^2$,
we denote by $L$ the set of all ordinals $\alpha \in \omega_1$ with no predecessor, and for every
such ordinal $\alpha$ and every integer $l \geq 0$, we put $G(\alpha+l)=\pi_2(H(l))$, where
$\pi_2 : (x,y)\in \N^2 \mapsto y$.
It follows that for all ordinals $\alpha<\beta$ in $\omega_1$ and all $p \in \N$,
if $\left)\alpha,\beta\right($ is infinite then it contains infinitely many elements that are mapped to $p$ by $G$.
Then, we set $F_k :=G \circ f_k : D_k \rightarrow \N$. Hence, for
all $\alpha<\beta$ in $D_k$ and all $p \in \N$, if there are infinitely many elements in
$\left)\alpha,\beta\right(\cap D_k$, then there are also infinitely many such elements that are mapped to $p$ by $F_k$.

Now, for all $p \in \N$, we set
$$\varphi_p(D):=\underset{k \in \Z/n}{\bigcup}\bigl[\{k\} \times F_k^{-1}\{p\}\bigr].$$
Obviously, the family $(\varphi_p(D))_{p \in \N}$ is a partition of $D$.
Now, we fix $p \in \N$ and we prove that $\varphi_p(D)$ has the same deficiency as $D$.
Clearly, the deficiency of $D$ is included in the one of $\varphi_p(D)$. Conversely, let $\beta$ be a limit ordinal in $\omega_1$
and $k \in \Z/n$ be such that $(k,\beta) \not\in \defi(D)$. Let $\alpha<\beta$ be an ordinal.
Then, there are infinitely many ordinals $\gamma \in \left)\alpha,\beta\right($ such that $(k,\gamma) \in D$,
and from the above construction of $\varphi_p(D)$ we conclude that there is an ordinal
$\gamma \in \left)\alpha,\beta\right($ such that $(k,\gamma) \in \varphi_p(D)$. Hence, $(k,\beta) \not\in \defi(\varphi_p(D))$.

We finish by proving that $\Ubd \varphi_p(D)=\Ubd D$. Obviously $\Ubd \varphi_p(D) \leq \Ubd D$.
There exists $k \in \Z/n$ such that $\Ubd D$ is the upper-bound of
$D_k$. Let $\delta <\Ubd D$ be an ordinal.
Since $\Ubd D_k=\Ubd D$ and $\Ubd D$ is a limit ordinal, there are infinitely many ordinals in $)\delta,\Ubd D( \cap  D_k$,
and hence at least one of them is mapped to $p$ by $F_k$, which yields that $\Ubd \varphi_p(D) \geq \delta$.
Varying $\delta$ yields $\Ubd \varphi_p(D) \geq \Ubd D$, and we conclude that $\Ubd \varphi_p(D)=\Ubd D$.
\end{proof}

\begin{proof}[Proof of the Second Partitioning Lemma]
Let $D \in \mathcal{D}$, whose upper-bound we denote by $\gamma$.

First of all, we introduce an elementary splitting method. Let $y \in D$.
We shall construct a partition $D=D_1 \cup D_2$
in which $D_1$ et $D_2$ are admissible, the second projection of $y$ is the greatest element of $\CSup(D_1)$, and
$\defi(D_2)=\defi(D)$.

Then, $\Ubd(D_2)=\gamma$ and the second projection of $D_2$ has no greatest element because neither does the one of $D$.

If the second projection of $y$ is finite, we simply put $D_1:=\{y\}$ and $D_2:=D \setminus \{y\}$. Checking the claimed properties is then straightforward.

Assume now that the second projection of $y$ is infinite, and write it $\delta+j$ for some limit ordinal $\delta$ and some integer $j \geq 0$.
We shall see that $D':=D \cap \bigl(\Z/n \times \left(0,\delta\right(\bigr)$ is admissible with upper-bound $\delta$.
To see this, note first that since $(l,\delta+j) \in D$ for some $l \in \Z/n$ we know from the admissibility of $D$ that for every ordinal $\alpha<\delta$,
    there is an ordinal $\beta \in \left)\alpha,\delta\right( \cap \CSup(D)$, and hence $\beta \in \CSup(D')$.
    This shows that $\Ubd(D')=\delta$.
    Next, we check that $D'$ is admissible. Let $\delta'$ be a limit ordinal. If $\delta'>\delta$, then $(l,\delta'+i)\not\in D$ for all $l \in \Z/n$ and all $i \in \N$.
    Now, if $\delta' \leq \delta$, the construction of $D'$ shows that, for all $k \in \Z/n$, if $(k,\delta')$
    belongs to the deficiency of $D'$, then it belongs to the one of $D$, and hence $(k+i+1,\delta'+i) \not\in D$ for all $i \in \N$, leading to
    $(k+i+1,\delta'+i) \not\in D'$ for all $i \in \N$. Hence, $D'$ is admissible, as claimed.

   Then, with the notation from the First Partitioning Lemma, we set $D_1:=\varphi_0(D') \cup \{y\}$ and $D_2:=D \setminus D_1$.
   Let us check that both $D_1$ and $D_2$ are admissible and that $\defi(D_2)=\defi(D)$.

   Let $(k,\delta') \in \defi(D_1)$. If $\delta'>\delta$ then $(l,\delta'+i)\not\in D_1$ for all $l \in \Z/n$ and all $i \in \N$.
  If $\delta' \leq \delta$, then $(k,\delta')$ belongs to $\defi(\varphi_0(D'))=\defi(D')$, and hence
  $(k,\delta') \in \defi(D)$ (because of the very definition of $D'$); since $D$ is admissible, this leads to $(k+1+i,\delta'+i)\not\in D$ for all $i \in \N$, and finally to
  $(k+1+i,\delta'+i)\not\in D_1$ for all $i \in \N$. Therefore, $D_1$ is admissible.

Next, we prove that $\defi(D_2)=\defi(D)$; since $D_2 \subset D$ and $D$ is admissible, this will prove that $D_2$ is admissible.
The inclusion $\defi(D) \subset \defi(D_2)$ is obvious. Now, let $(k,\delta') \in \defi(D_2)$. If $\delta'>\delta$ then
there is an ordinal $\alpha>\delta+j$ such that there is no $\beta \in \left)\alpha,\delta'\right($ for which $(k,\beta)\in D_2$,
and hence there is no  $\beta \in \left)\alpha,\delta'\right($ for which $(k,\beta)\in D$. Hence,
$(k,\delta') \in \defi(D)$.
Next, assume that $\delta'\leq \delta$. Hence, $(k,\delta')$ belongs to the deficiency of $D' \setminus \varphi_0(D')=\underset{n \geq 1}{\bigcup} \varphi_n(D')$,
a set which has the same deficiency as $D'$ because $\varphi_1(D') \subset \underset{n \geq 1}{\bigcup} \varphi_n(D') \subset D'$
and $\defi\bigl(\varphi_1(D')\bigr)=\defi(D')$; we conclude that $(k,\delta')\in \defi(D)$.

We conclude that $D_2$ is admissible and has the same deficiency as $D$.

\vskip 3mm
Since $D$ is infinite countable, we can choose a bijection
$x : i \in \N \mapsto x_i \in D$.

Next, by induction, we construct a finite sequence of pairwise disjoint admissible subsets of $D$ as follows.
\begin{itemize}
\item First of all, we find an admissible subset $D_0 \subset D$ in $\mathcal{D'} \setminus \mathcal{D}$, that contains $x_0$,
with the second projection of $x_0$ being the greatest element of $\CSup(D_0)$, and such that $D \setminus D_0$ belongs to $\mathcal{D}$ and has the same deficiency as $D$.

\item Let $p \in \N$ and $(D_0,\dots,D_p)$ be a list of pairwise disjoint admissible subsets of $D$, such that
$\CSup(D_i)$ has a greatest element $\beta_i$ for every $i \in \lcro 0,p\rcro$, the set
$D':=D \setminus (D_0 \cup \cdots \cup D_p)$ belongs to $\mathcal{D}$ and has the same deficiency as $D$, and $\{x_0,\dots,x_p\} \subset D_0 \cup \cdots \cup D_p$.
Write $x_{p+1}=(k,\alpha_{p+1})$.
If $(k,\gamma)\not\in \defi(D)$, then there is an ordinal $\beta\geq \alpha_{p+1}$ such that $(k,\beta)\in D'$,
and we take the least such ordinal $\beta_{p+1}$ (so that $\beta_{p+1}=\alpha_{p+1}$ if $x_{p+1} \in D'$). Then, by applying the above construction to $y:=(k,\beta_{p+1})$ and to $D'$,
we obtain an admissible subset $D_{p+1} \subset D'$ that belongs to $\mathcal{D}' \setminus \mathcal{D}$, contains $y$, and is such that $\beta_{p+1}=\Ubd D_{p+1}$ and $\defi(D' \setminus D_{p+1})=\defi(D')=\defi(D)$.
If, on the other hand, $(k,\gamma)\in \defi(D)$, we choose the least integer $N$ such that $x_N \in D'$,
and we simply apply the above construction to $y:=x_N$ and to $D'$.
In any case, note that $x_{p+1} \in D_0 \cup \cdots \cup D_{p+1}$
and that, if $(k,\gamma) \not\in \defi(D)$, then $\CSup(D_{p+1})$ has a greatest element $\beta \geq \alpha_{p+1}$ such that
$(k,\beta)\in D_{p+1}$.
\end{itemize}
This construction yields a sequence $(D_p)_{p \in \N} \in (\mathcal{D}' \setminus \mathcal{D})^\N$ of pairwise disjoint admissible subsets of $D$ such that, for all
$p \in \N$, $x_p \in D_0 \cup \cdots \cup D_p$ and, if $x_p=(k,\alpha_p)$ and $(k,\gamma) \not\in \defi(D)$, then $\CSup(D_{p})$ has a greatest element $\beta \geq \alpha_p$ such that
$(k,\beta)\in D_p$. Since $D=\{x_i \mid i \in \N\}$, we obtain $D=\underset{p \in \N}{\bigcup} D_p$.

It remains to prove that $(D_p)_{p \in \N}$ satisfies the definition of a good partition.
Let $k \in \Z/n$ be such that $(k,\gamma) \not\in \defi(D)$.
Let $\alpha <\gamma$ be an ordinal. Then, there is an element of the form $(k,\beta)$ in $D$ with $\beta \in \left)\alpha,\gamma\right($.
Thus, $x_N=(k,\beta)$ for some $N \in \N$. The above shows that $\CSup(D_N)$ has its greatest element $\beta' \in \left(\beta,\gamma\right($
for which $(k,\beta') \in D_N$, and hence $\beta' \in \left)\alpha,\gamma\right($.
We conclude that $(D_p)_{p \in \N}$ is a good partition of $D$.
\end{proof}

We are now ready to prove the Realization Theorem.
The proof is by induction on the supremum of the combined support of the family under consideration.
Let $(m_{k,\alpha})_{(k,\alpha) \in (\Z/n) \times \omega_1}$ be an admissible family of cardinals.

Assume first that $m_{k,\alpha}=0$ for every infinite ordinal $\alpha$ and every $k \in \Z/n$.
This yields a countable set $I$ together with a mapping $t : I \rightarrow \Z/n \times \N^*$ in which, for all
$(k,\alpha)\in \Z/n \times \N$, the set $t^{-1}\{(k,\alpha+1)\}$ has cardinality $m_{k+\alpha,\alpha}$.
Remember the notation $f^{(k,l)}$ for $k \in \Z/n$ and $l \in \N$ (from Section \ref{augmentationSection}).
Set $g:=\underset{i \in I}{\bigvee} f^{(t(i))}$, which is a countable terminal representation of $\calC_n$.
By Proposition \ref{pointedsumprop}, we see that
$$\forall (k,\alpha)\in \Z/n \times \N, \quad
n_{k,\alpha}(g)=\card t^{-1}\{(k-\alpha,\alpha+1)\}=m_{k,\alpha}.$$

In the remainder of the proof, we assume that $m_{k,\alpha}>0$ for some infinite ordinal $\alpha$
and some $k \in \Z/n$. We denote by $D$ the support of $(m_{k,\alpha})_{(k,\alpha) \in (\Z/n) \times \omega_1}$, and we set $\gamma:=\Ubd(D)$.
We split the discussion into two cases, whether $\gamma$ belongs to $\CSup(D)$ or not.

\noindent \textbf{Case 1: $\gamma \not\in \CSup(D)$.} \\
The Second Partitioning Lemma yields a good partition $(D_p)_{p \in \N}$ of $D$.
Given $p \in \N$, we define a family $m^{(p)}$ of cardinals, indexed over $(\Z/n) \times \omega_1$, as
follows:
$$\forall k \in \Z/n, \; \forall \alpha \in \omega_1, \quad m^{(p)}_{k,\alpha}=1_{D_p}(k,\alpha)\, m_{k,\alpha}.$$
Let $p \in \N$. The support of $m^{(p)}$ is obviously $D_p$, and it is admissible with supremum less than $\gamma$. Hence, by induction,
we find a countable terminal representation $f_p$ of $\calC_n$ such that
$$\forall k \in \Z/n, \; \forall \alpha \in \omega_1, \quad  m^{(p)}_{k,\alpha}=n_{k,\alpha}(f_p).$$
As $(D_p)_{p \in \N}$ is a partition of $D$, we find that
$$\forall k \in \Z/n, \; \forall \alpha \in \omega_1, \quad m_{k,\alpha}=\sum_{p \in \N} m_{k,\alpha}^{(p)}.$$
Hence the pointed sum $f:=\underset{p \in \N}{\bigvee} f_p$ is a countable terminal representation of $\calC_n$ that satisfies
$$\forall k \in \Z/n, \; \forall \alpha \in \omega_1, \quad  m_{k,\alpha}=n_{k,\alpha}(f).$$

\noindent \textbf{Case 2: $\gamma \in \CSup(D)$.} \\
We write $\gamma=\delta+N$ for some limit ordinal $\delta$ and some integer $N \geq 0$.
Note that $m_{k,\gamma}>0$ for some $k \in \Z/n$.
Set
$$M:=\sum_{k \in \Z/n}\, \sum_{p \in \N} m_{k,\delta+p} \quad \text{and} \quad
E:=\bigl\{(k,i,j)\in \Z/n \times \N^2: \; j < m_{k,\delta+i}\bigr\},$$
the latter of which is a subset of $\Z/n \times \N^2$ with cardinality $M$.
Using the fact that $D$ is admissible, we proceed just like in the proof of the Second Partitioning Lemma to find that
$$D':=D \cap \bigl(\Z/n \times \left(0,\delta\right(\bigr)$$
is admissible and that $\delta=\Ubd(D')$. Besides, it is easily checked that $\defi(D')=\defi(D)$.

\noindent \textbf{Subcase 2.1: $M$ is infinite.} \\
We choose a bijection $g : \N \overset{\simeq}{\rightarrow} E$,
whose components we denote by $g_1 : \N \rightarrow \Z/n$, $g_2 : \N \rightarrow \N$ and $g_3 : \N \rightarrow \N$,
so that $g(p)=(g_1(p),g_2(p),g_3(p)) \in \Z/n \times \N \times \N$ for all $p \in \N$.
Next, we apply the First Partitioning Lemma to $D'$.
For each $p \in \N$, we define a family $m^{(p)}$ as follows:
$m^{(p)}_{k,\alpha}:=m_{k,\alpha}$ for all $(k,\alpha) \in \varphi_p(D')$,
$m_{g_1(p),\delta+g_2(p)}^{(p)}=1$, and $m^{(p)}_{k,\alpha}=0$ for all the \emph{remaining} pairs $(k,\alpha) \in \Z/n \times \omega_1$.
Hence,
$$\forall k \in \Z/n, \; \forall \alpha \in \omega_1, \quad m_{k,\alpha}=\sum_{p \in \N} m_{k,\alpha.}^{(p)}$$
Next, let $p \in \N$. We shall construct a countable terminal representation $f^{(p)}$ such that
$$\forall (k,\alpha) \in \Z/n \times \omega_1, \quad n_{k,\alpha}(f^{(p)})=m_{k,\alpha}^{(p)}\, 1_{\varphi_p(D')}(k,\alpha).$$
To this end, we take a good partition $(D_{p,q})_{q \in \N}$ of $\varphi_p(D')$ (given by the Second Partitioning Lemma).
For each integer $q \in \N$, we use the induction hypothesis to recover a countable terminal representation
$f^{(p,q)}=(f_k^{(p,q)},X_k^{(p,q)},o_k^{(p,q)})_{k \in \Z/n}$ such that
$$\forall (k,\alpha) \in \Z/n \times \omega_1, \; n_{k,\alpha}(f^{(p,q)})=m_{k,\alpha}^{(p)}\, 1_{D_{p,q}}(k,\alpha).$$
Next, we consider the pointed sum
$$h_p:=\underset{q \in \N}{\bigvee} f^{(p,q)}=\bigl(f_k^{(p)},X_k^{(p)},o_k^{(p)}\bigr)_{k \in \Z/n,}$$
so that
$$\ell(h_p)=\delta \quad \text{and} \quad
\forall (k,\alpha) \in \Z/n \times \omega_1, \; n_{k,\alpha}(h_p)=m_{k,\alpha}^{(p)}\, 1_{\varphi_p(D')}(k,\alpha).$$
Since $(g_1(p),\delta+g_2(p)) \in D$, we see from the admissibility of $D$ that
$(g_1(p)-g_2(p)-1,\delta) \not\in \defi(D)$, and hence $(g_1(p)-g_2(p)-1,\delta) \not\in \defi(D')$.
Then,  we set $l:=g_1(p)-g_2(p)$ and we check that $h_p$ satisfies condition $(\Aprop_l)$ from Section \ref{augmentationSection}.
Let indeed $\alpha<\delta$ be an ordinal.
By the definition of a good partition, there exists an integer $q \in \N$
and an element $\beta \in \left)\alpha,\delta\right($ such that $\Ubd(D_{p,q})=\beta$ and $(l-1,\beta)\in D_{p,q}$.
In particular, $f^{(p,q)}$ has length $\beta+1$, to the effect that $X^{(p,q)}_{l,\beta+1}=\{o_l^{(p,q)}\}$.
As $n_{l-1,\beta}(f^{(p,q)})>0$, we see that
$X^{(p,q)}_{l-1,\beta}$ is not reduced to $o_{l-1}^{(p,q)}$. Hence,
$f^{(p,q)}_{l-1}$ maps at least one element of $X^{(p,q)}_{l-1,\alpha} \setminus \{o_{l-1}^{(p,q)}\}$ to $o_l^{(p,q)}$, and it follows that
$h_p$ maps at least one element of $X^{(p)}_{l-1,\alpha} \setminus \{o_{l-1}^{(p)}\}$ to $o_l^{(p)}$.

Hence $h_p$ satisfies condition $(\Aprop_l)$ in the definition of the augmentation.
We can therefore define $(h_p)^{+l}$, a countable terminal representation of length $\delta+1$,
and we have $n_{l,\delta}((h_p)^{+l})=1$, $n_{k,\delta}((h_p)^{+l})=0$ for all $k \in \Z/n \setminus \{l\}$, and
$n_{k,\alpha}(h_p^{+l})=n_{k,\alpha}(h_p)$ for all $(k,\alpha)\in \Z/n \times \omega_1$ with $\alpha \neq \delta$.
From there, we can, with the help of Remark \ref{augmentationremark}, continue augmenting at later stages:
by induction, we obtain a sequence $(g^{(p,i)})_{i \in \N}$ of countable terminal representations of $\mathcal{C}_n$, in which
$$g^{(p,0)}=h_p^{+l} \quad \text{and} \quad \forall i \in \N, \; g^{(p,i+1)}=(g^{(p,i)})^{+(l+i+1)}$$
and
$$\forall (k,\alpha) \in (\Z/n) \times \omega_1, \; \forall i\in \N, \quad \begin{cases}
n_{k,\alpha}(g^{(p,i)})=1 & \text{if $\alpha=\delta+i$ and $k=l+i$} \\
n_{k,\alpha}(g^{(p,i)})=n_{k,\alpha}(h_p) & \text{otherwise.}
\end{cases}$$
In particular,
$$\forall (k,\alpha) \in (\Z/n) \times \omega_1, \; n_{k,\alpha}\bigl(g^{(p,g_2(p))}\bigr)=m_{k,\alpha}^{(p)}.$$
We conclude that the pointed sum $g:=\underset{p \in \N}{\bigvee} g^{(p,g_2(p))}$ is a countable terminal representation of $\mathcal{C}_n$ such that
$$\forall (k,\alpha) \in (\Z/n) \times \omega_1, \; n_{k,\alpha}(g)=m_{k,\alpha}.$$

\noindent \textbf{Subcase 2.2: $M$ is finite.} \\
We choose a bijection $g : \lcro 1,M\rcro \overset{\simeq}{\rightarrow} E$.
For all $p \in \{1,\dots,M-1\}$, we consider the set $D'_p:=\varphi_{p-1}(D')$, and we put
$D'_M:=\bigcup_{p \geq M-1} \varphi_p(D')$, which defines a finite partition of $D'$
into admissible subsets, all with supremum $\delta$ and with the same deficiency as $D'$.
Then, the result is obtained by applying the same method as in Subcase 2.1, only with finitely many sequences
$m^{(p)}$.

This completes the proof of the Realization Theorem.

\subsection{Proof of the Adapted Basis Theorem}\label{adaptedProofSection}

We conclude the article by proving the Adapted Basis Theorem.
Let us start from a locally nilpotent linear representation $u$ of $\calC_n$
with countable dimension.

Assume first that $u$ is reduced.
By Lemma \ref{supportconstraintlemma}, $(\kappa_{k,\alpha}(u))_{k \in \Z/n,\alpha\in \omega_1}$ is an admissible family of cardinals.
Hence, by Theorem \ref{realizationtheo}, there exists a countable terminal representation $f$ of $\calC_n$ such that
$n_{k,\alpha}(f)=m_{k,\alpha}$ for every $k \in \Z/n$ and every ordinal $\alpha\in \omega_1$.
Denote by $\widetilde{f}$ its linear realization. Then,
$$\forall (k,\alpha) \in \Z/n \times \omega_1, \quad \kappa_{k,\alpha}(\widetilde{f})=n_{k,\alpha}(f)=\kappa_{k,\alpha}(u).$$
Since both $u$ and $\widetilde{f}$ are reduced and locally nilpotent with countable dimension, the Cyclic Kaplansky Theorem
yields that they are isomorphic linear representations of $\calC_n$. However, it is clear that
$\widetilde{f}$ has an adapted basis: indeed if we write $f=(X_k,o_k,g_k)_{k \in \Z/n}$,
then the families consisting of the standard basis of $\F^{(X_k \setminus \{o_k\})}$, for $k \in \Z/n$, qualify by the very definition of
$\widetilde{f}$. Hence, $u$ has an adapted basis.

Let us now consider the general case. We have seen in Section \ref{extractsaturatedSection} that $u$ is isomorphic to the direct sum of $u_{\red}$ and $u_{\infty}$, which are reduced and saturated locally nilpotent representations of $u$, respectively.
Obviously, $u_{\red}$ has countable dimension. Hence, by the above special case, $u_{\red}$ has an adapted basis.
On the other hand, $u_{\infty}$ has an adapted basis by Theorem \ref{saturatedadaptedbasisTheo}. It is clear from there that $u_{\red} \oplus u_{\infty}$
has an adapted basis, and we conclude that $u \simeq u_{\red} \oplus u_{\infty}$ has an adapted basis.

\end{document}